\newcommand{\sgn}{\operatorname{sgn}}
\newtheorem{thm}{Theorem}
\newtheorem{lem}[thm]{Lemma}
\newtheorem{prop}[thm]{Proposition}
\newtheorem{cor}[thm]{Corollary}
\theoremstyle{definition}
\newtheorem{defn}[thm]{Definition}
\theoremstyle{remark}
\newtheorem{rmk}[thm]{Remark}
\newcommand{\NN}{\mathbb N}              
\newcommand{\ZZ}{\mathbb Z}              
\newcommand{\RR}{\mathbb R}              
\newcommand{\D}{\ensuremath{\,\mathrm{d}}}
\renewcommand{\epsilon}{\varepsilon}
\renewcommand{\geq}{\geqslant}
\renewcommand{\leq}{\leqslant}
\newcommand{\FT}{\mathcal F}
\newcommand{\abs}[1]{\left\lvert#1\right\rvert}
\newcommand{\jp}[1]{\left\langle#1\right\rangle}
\newcommand\reallywidecheck[1]{%
\savestack{\tmpbox}{\stretchto{%
  \scaleto{%
    \scalerel*[\widthof{\ensuremath{#1}}]{\kern-.6pt\bigwedge\kern-.6pt}%
    {\rule[-\textheight/2]{1ex}{\textheight}}
  }{\textheight}%
}{0.5ex}}%
\stackon[1pt]{#1}{\scalebox{-1}{\tmpbox}}%
}
\newcommand\reallywidehat[1]{%
\savestack{\tmpbox}{\stretchto{%
  \scaleto{%
    \scalerel*[\widthof{\ensuremath{#1}}]{\kern-.6pt\bigwedge\kern-.6pt}%
    {\rule[-\textheight/2]{1ex}{\textheight}}
  }{\textheight}%
}{0.5ex}}%
\stackon[1pt]{#1}{\tmpbox}%
}
\newcommand\reallywidetilde[1]{%
\savestack{\tmpbox}{\stretchto{%
  \scaleto{%
    \scalerel*[\widthof{\ensuremath{#1}}]{\kern-.6pt\sim\kern-.6pt}%
    {\rule[-\textheight/2]{1ex}{\textheight}}
  }{\textheight}%
}{0.5ex}}%
\stackon[1pt]{#1}{\tmpbox}%
}
\title[Spatially quasiperiodic Benjamin-Ono]{Local well-posedness of the Benjamin-Ono equation with spatially quasiperiodic data}
\author{Sultan Aitzhan}
\address{Department of Mathematics, Drexel University, Philadelphia, 19104, USA}
\email{sa3697@drexel.edu}
\author{David M. Ambrose}
\address{Department of Mathematics, Drexel University, Philadelphia, 19104, USA}
\email{dma68@drexel.edu}
\begin{document}

\begin{abstract}
We consider the Benjamin-Ono equation in the spatially quasiperiodic setting.
We establish local well-posedness of the initial value problem with initial data in quasiperiodic Sobolev spaces.
This requires developing some of the fundamental properties of Sobolev spaces and the energy method for quasiperiodic functions.
We discuss prospects for global existence.  We demonstrate that while conservation laws still hold, these quantities no longer control the 
associated Sobolev norms, thereby preventing the establishment of global results by usual arguments.
\end{abstract}

\subjclass{Primary 35G25, 35Q35, 37K10} 

\keywords{Benjamin-Ono, quasiperiodic, well-posedness, conservation laws}

\maketitle
\tableofcontents

\section{Introduction}
The Benjamin-Ono equation is a dispersive equation used in modeling water waves, given by
\[
u_t = uu_x + Hu_{xx}.
\]
Here, $H$ stands for the Hilbert transform, which is defined by 
\[
Hu(x) = \frac{1}{\pi} \mathrm{p.v.}\int^\infty_{-\infty} \frac{u(y)}{x - y} \D y
\]
on the real line and
\[
Hu(x) = \frac{1}{2\pi} \mathrm{p.v.}\int^{\pi}_{-\pi} u(y) \cot\Big(\frac{x-y}{2}\Big) \D y
\]
on the circle.
Amongst its features, the Benjamin-Ono equation constitutes an integrable system and possesses multi-soliton solutions (see \cite{Case}).
The well-posedness theory for periodic and real-line decaying data is rich, and for brevity we refer the reader to the results of \cite{KillipLaurensVisan, GerardKappelerTopalov} and references therein.
However, the case of quasiperiodic initial data, such as
\[
u_0(x) = \cos(x) + \cos(\sqrt{2} x),
\]
is yet to be fully addressed, and in this paper we undertake the case of sufficiently regular quasiperiodic initial data.

Amongst other dispersive equations, some significant progress for the quasiperiodic problem is made in the case of the KdV and the NLS equations. 
Namely, quasiperiodic KdV equation is addressed in \cite{TsugawaKdV, DamanikGoldstein,BinderDamanikGoldsteinLukic} and quasiperiodic NLS equations are considered in \cite{Schippa,DamanikYongFeiPartOne,DamanikYongFeiPartTwo,Fei}.
The case of quasiperiodic data for other dispersive equations, including more general ones, is also considered in \cite{JasonZhao,HagenPapenburg,DamanikYongFeiKdV,DamanikYongFeigBBM}.
Beyond dispersive PDE, we very briefly mention that there are quasiperiodic well-posedness results for Euler equations \cite{SunTopalov}, and computational results for quasiperiodic water waves in \cite{WilkeningZhaoJFM2021,WilkeningZhaoJNS2021,WilkeningZhaoJCP,WilkeningZhaoPA2023,DyachenkoSemenova}.

In what follows, we describe some of the above results in relation to our work.
Some of these results \cite{HagenPapenburg,JasonZhao,Schippa,TsugawaKdV} are local. 
The methods used are traditional, which include the Picard iteration style argument (as in the case of \cite{TsugawaKdV,HagenPapenburg,Schippa}), and energy estimates (as in \cite{JasonZhao}).
Our work also involves proving energy estimates; as a result, our paper is closer in spirit to \cite{JasonZhao}.
The key difference between our work and \cite{JasonZhao} is the following. 
Quasiperiodic functions are a subset of almost periodic functions, and \cite{JasonZhao} provides local well-posedness for almost periodic data.
However, the energy method as employed in \cite{JasonZhao}, which uses a localized Sobolev norm, cannot tell if the standard Sobolev regularity is preserved.

While the papers mentioned in the previous paragraph (including the present work) do provide solutions, these solutions are local in time.
In the standard case of the periodic and decaying data, these solutions become global by means of conservation laws.
This happens due to the fact that these conservation laws control the associated Sobolev norms.
In the quasiperiodic case, however, it does not seem obvious if there are any quantities that control the norms used in \cite{HagenPapenburg,JasonZhao,Schippa,TsugawaKdV}.
The lack of such quantities makes it extremely difficult to extend solutions to arbitary time of existence.

In the case of results in \cite{DamanikGoldstein,BinderDamanikGoldsteinLukic,DamanikYongFeiKdV,DamanikYongFeiPartOne,DamanikYongFeiPartTwo, DamanikYongFeigBBM}, these papers provide global existence and uniqueness to KdV and NLS-type equations. 
These works rely on studying spectral properties of related Schr\"{o}dinger operators, and thus are somewhat different from traditional approaches.
Furthermore, these works generally assume a Diophantine condition and consider Fourier series with exponential decay as the initial data.
Since exponential decay in the Fourier space implies that the data is analytic, these papers focus on a very specific class of data. 
We remark that the result of \cite{HagenPapenburg} is a local existence result, while also considering an analytic class of initial data, although this
work does not include any Diophantine assumptions. 
Furthermore, we note here that in a recent work \cite{DamanikYongFeigBBM}, a family of generalized Benjamin-Bona-Mahony (BBM) equations are studied.
In the case of BBM, they produce local well-posedness for initial data with a polynomial rate of decay; the authors there again use the 
combinatorial analysis method of their previous works.  As the present work uses a quasiperiodic Sobolev space, there is no assumption of exponential
decay of the Fourier series, and we likewise use no Diophantine conditions.

Unlike in the works listed in the above paragraph, we do not attempt to minimize the rate of decay or attempt to prove global existence. 
Instead, we would like to tred somewhere in-between and provide a framework that can be adapted to other dispersive equations, similarly to the results of \cite{HagenPapenburg,JasonZhao,Schippa}.
In particular, we use a regularized system which, when combined with the method of initial data regularization Bona and Smith \cite{BonaSmith}, permits to reproduce the classical results of \cite{SautTemam,AbdelouhabBonaFellandSaut} in the quasiperiodic setting.
While the proof is tailored for the Benjamin-Ono equation, the method is general enough to encompass more equations, including KdV equation. 

We choose the Benjamin-Ono equation specifically because the contraction mapping approach, 
as applied directly to the Benjamin-Ono equation, is known to fail at any regularity. 
In fact, the flow map from the initial data to solutions fails to be uniformly continuous \cite{KenigKoenig2003,Molinet2007}
and the bilinear estimates cannot hold at all, as indicated in \cite[Theorem 4.3.1]{HerrSebastianDissertation}.
This suggests that to have any theory of well-posedness, one at least needs to develop the energy method for the quasiperiodic Benjamin-Ono equation with sufficiently differentiable data.
In fact, in proving well-posedness for $L^2$-based data on torus \cite{Molinet2007,Molinet2008,MolinetPilod2012}, it was first shown that the estimates were satisfied on smooth functions.
Of course, this presupposes existence of smooth solutions, and the goal of this paper is to prove existence of their quasiperiodic counterparts.

We now describe quasiperiodic functions. 
For a fixed natural number $N > 1$, let $\alpha = (\alpha_1, \ldots, \alpha_N)$ satisfy $\alpha \cdot k\neq 0$ for $k \in \dot{\ZZ}^N = \ZZ^N \setminus \{ 0,\ldots, 0\}.$  
For $j = 1,\ldots,N$ let $f_j \in H^s(\RR\setminus 2 \pi \alpha_j^{-1}\ZZ)$ and consider functions of the form 
\begin{equation}\label{FourierSeriesExpansion1}
	f(x) = \sum_{j=1}^N f_j (x) = \sum_{j=1}^N \sum_{k \in \ZZ} \hat{f_j}(k)\exp(i \alpha_j kx),
\end{equation}
where $\hat{f_j}(k)$ are the Fourier coefficients.
The condition $\alpha \cdot k \neq 0$ ensures that $f(x)$ is not a periodic function. 
Instead, it is quasiperiodic. 
Note that the space of functions of the form \eqref{FourierSeriesExpansion1} does not form an algebra, so this space does not suit the analysis due to the presence of the quadratic term in Benjamin-Ono equation.
As such, we introduce the following space:
\begin{defn}[Quasiperiodic Sobolev spaces]
Let $s \in \RR,$ and define a space $H^{s}_{qp}$ of functions of the form
\[
u(x) = \sum_{k \in \ZZ^N} \hat{u}(k) \exp(i \alpha \cdot k x)
\]
with the following norm:
\[ 
\| u\|_{H^s_{qp}}^2 := \sum_{k \in \ZZ^N} (1 + k_1^2 + \ldots + k_N^2)^s \abs{\hat{u}}^2(k) = \sum_{k \in \ZZ^N} (1 + |k|^2)^s \abs{\hat{u}}^2(k) = \sum_{k \in \ZZ^N} \jp{|k|}^{2s} \abs{\hat{u}}^2(k).
\]
\end{defn}
Functions of the form \eqref{FourierSeriesExpansion1} clearly belong to $H^s_{qp}$ for large enough $s$.
The subscript $qp$ in $H^s_{qp}$ stands for quasiperiodic, and it is clear how these spaces generalize the usual Sobolev spaces for periodic functions.
We note that this space is already mentioned in \cite{Schippa} for NLS and KdV equations and \cite{SunTopalov} for Euler equations.

We state the main result of this paper.

\begin{thm}
Let $s > N/2+1$ and let $u_0 \in H^s_{qp}.$ 
There exists $T := T(s, \| u_0 \|_{H^s_{qp}})>0$ such that the Benjamin-Ono equation has a unique solution $u\in C([0,T];H^s_{qp})$.
Furthermore, the solution $u$ depends continuously on the initial data.
\end{thm}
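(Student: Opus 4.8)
The plan is to prove local well-posedness by the classical energy method, combined with a Bona--Smith-type regularization of both the equation and the initial data, carried out entirely in the scale of quasiperiodic Sobolev spaces $H^s_{qp}$. The first step is to set up the function-analytic machinery: since $H^s_{qp}$ is defined through $\ell^2$-weighted Fourier coefficients on $\ZZ^N$, it behaves formally just like a periodic Sobolev space on the torus $\TT^N$, so one records the basic facts one needs---that $H^s_{qp}$ is a Hilbert space, that for $s > N/2$ it is a Banach algebra (this is where the hypothesis $s > N/2 + 1$ enters, guaranteeing $s-1 > N/2$ so that $H^{s-1}_{qp}$ is already an algebra), that the Hilbert transform $H$ acts as the Fourier multiplier $-i\,\sgn(\alpha\cdot k)$ and is therefore bounded on every $H^s_{qp}$ and skew-adjoint, and that $\partial_x$ maps $H^{s}_{qp}$ to $H^{s-1}_{qp}$ with the relevant commutator/Kato--Ponce-type estimate $\|\jp{D}^s(fg) - f\jp{D}^s g\|_{L^2_{qp}} \lesssim \|f\|_{H^s_{qp}}\|g\|_{H^{s}_{qp}}$ for $s > N/2+1$. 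These are the ``fundamental properties'' promised in the abstract, and none is genuinely hard---each follows by transplanting the torus proof, with $|k|^2 = k_1^2 + \cdots + k_N^2$ playing the role of the usual frequency weight.

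The second step is the construction of solutions via a regularized (parabolic) system. I would introduce, for $\epsilon > 0$, the regularized equation $u^\epsilon_t = J_\epsilon\big((J_\epsilon u^\epsilon)(J_\epsilon u^\epsilon)_x\big) + H u^\epsilon_{xx} - \epsilon(-\Delta) u^\epsilon$ (or a comparable viscous/mollified variant), where $J_\epsilon$ is a Fourier truncation or smoothing operator and $-\Delta$ is the Fourier multiplier $|k|^2$; this is a genuine ODE (or a semilinear parabolic equation) on $H^s_{qp}$ whose right-hand side is locally Lipschitz, so Picard iteration gives a unique short-time solution $u^\epsilon \in C([0,T_\epsilon]; H^s_{qp})$. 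The third step is the \emph{a priori} energy estimate, uniform in $\epsilon$: differentiating $\tfrac12\frac{d}{dt}\|u^\epsilon\|_{H^s_{qp}}^2 = \jp{\jp{D}^s u^\epsilon, \jp{D}^s u^\epsilon_t}$, substituting the equation, using skew-adjointness of $H\partial_x^2$ to kill the dispersive term, using $\epsilon$-dissipation to absorb (or simply discard) the viscous term, and estimating the nonlinear term by writing $\jp{D}^s(uu_x) = u\,\jp{D}^s u_x + [\jp{D}^s, u]u_x$ so that the worst (derivative-losing) piece becomes $\jp{u\,\jp{D}^s u_x, \jp{D}^s u} = -\tfrac12\jp{u_x\,\jp{D}^s u, \jp{D}^s u}$ after integration by parts and is controlled by $\|u_x\|_{L^\infty_{qp}}\|u\|_{H^s_{qp}}^2 \lesssim \|u\|_{H^s_{qp}}^3$, while the commutator is handled by the Kato--Ponce estimate of step one. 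This yields $\frac{d}{dt}\|u^\epsilon\|_{H^s_{qp}}^2 \lesssim \|u^\epsilon\|_{H^s_{qp}}^3$, hence a lower bound $T = T(s, \|u_0\|_{H^s_{qp}}) > 0$ on the existence time that is independent of $\epsilon$, together with a uniform bound $\sup_{[0,T]}\|u^\epsilon\|_{H^s_{qp}} \leq R(\|u_0\|_{H^s_{qp}})$.

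The fourth step is passage to the limit $\epsilon \to 0$. Uniform $H^s_{qp}$ bounds plus the equation give uniform bounds on $u^\epsilon_t$ in $H^{s-2}_{qp}$, and a compactness/Aubin--Lions or Cauchy-sequence argument at the lower regularity $H^{s-1}_{qp}$ (estimating $\|u^{\epsilon} - u^{\epsilon'}\|_{H^{s-1}_{qp}}$ by a Gronwall argument, exactly as in Bona--Smith) produces a limit $u \in C([0,T]; H^{s-1}_{qp}) \cap L^\infty([0,T]; H^s_{qp})$ solving Benjamin--Ono; upgrading to $u \in C([0,T]; H^s_{qp})$---i.e. continuity in the \emph{strong} $H^s_{qp}$ topology, not just weak---requires the Bona--Smith trick of also regularizing the initial data, running the argument with $u_0^\epsilon$ chosen so that $\|u_0 - u_0^\epsilon\|_{H^s_{qp}} \to 0$ while $\|u_0^\epsilon\|_{H^{s+1}_{qp}} = o(\epsilon^{-1})$, and comparing the solution of the unmollified equation with data $u_0^\epsilon$ to that with data $u_0$. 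Uniqueness follows from the same $H^{s-1}_{qp}$ (or even $L^2_{qp}$) difference estimate plus Gronwall, and continuous dependence on the data follows by combining the uniqueness-type difference estimate with the Bona--Smith comparison, again verbatim from the classical argument.

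The main obstacle is not any single estimate but the verification that the classical Bona--Smith scheme transplants faithfully to the quasiperiodic setting---concretely, that every tool it relies on (algebra property, commutator estimates, skew-adjointness of the dispersive operator, the Sobolev embedding $H^s_{qp} \hookrightarrow L^\infty_{qp}$ for $s > N/2$, and the mollifier properties $\|J_\epsilon f\|_{H^{s+\sigma}_{qp}} \lesssim \epsilon^{-\sigma}\|f\|_{H^s_{qp}}$ and $\|(I-J_\epsilon)f\|_{H^{s-\sigma}_{qp}} \lesssim \epsilon^{\sigma}\|f\|_{H^s_{qp}}$) holds for $H^s_{qp}$ with constants independent of the frequency vector $\alpha$. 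Since the $H^s_{qp}$ norm is literally an $\ell^2$ weight on $\ZZ^N$ with weight $\jp{|k|}^{2s}$ that does not see $\alpha$ at all, each of these is the torus statement in disguise; the one point requiring a little care is that $\partial_x$ acting on $\exp(i\alpha\cdot k x)$ produces the factor $i\,\alpha\cdot k$ rather than $i k$, so derivative bounds pick up $\|\partial_x f\|_{H^{s-1}_{qp}} \leq C(\alpha)\|f\|_{H^s_{qp}}$ with $C(\alpha) \sim \max_j |\alpha_j|$ --- harmless, since $\alpha$ is fixed throughout, but it must be tracked so that the final existence time $T$ is allowed to depend on $\alpha$ as well as on $s$ and $\|u_0\|_{H^s_{qp}}$.
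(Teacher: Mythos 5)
Your overall strategy---energy estimates for a Fourier-mollified system, uniform-in-regularization bounds, a Cauchy/difference argument at lower regularity upgraded to $H^s_{qp}$ by the Bona--Smith regularization of the initial data, then uniqueness and continuous dependence by Gronwall---is exactly the route the paper takes, and the quasiperiodic transplantation issues you flag (the $\ell^2(\ZZ^N)$ structure of the norm, $\partial_x \leftrightarrow i\,\alpha\cdot k$, the need to work with Fourier sums rather than literal integrals) are the right ones. Two points in your write-up, however, would fail as literally stated and need repair.

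First, your regularized system leaves the dispersive term $Hu^\epsilon_{xx}$ (and the added viscous term $-\epsilon(-\Delta)u^\epsilon$) unmollified, yet you claim the right-hand side is locally Lipschitz on $H^s_{qp}$ so that Picard applies. It is not: both terms lose two derivatives. The paper mollifies the linear term as well, writing $\chi_n[Hu_{xx}]$, precisely so that the symbol $(\alpha\cdot k)^2$ is bounded by $n^2$ on the support of the cutoff and the entire right-hand side becomes a bounded (quadratic plus linear) map of $H^s_{qp}$ into itself; only then is the regularized problem an honest ODE in the Banach space. Your parenthetical escape hatch (treat it as a semilinear parabolic equation via the heat semigroup) is a legitimate alternative, but then the subsequent steps (uniform bounds, removal of the viscosity, and the Cauchy estimates comparing two regularization parameters) all have to be redone for that variant, and it is not the argument you actually sketch. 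Second, the commutator estimate you record, $\| \jp{D}^s(fg) - f\jp{D}^s g\|_{L^2_{qp}} \lesssim \|f\|_{H^s_{qp}}\|g\|_{H^{s}_{qp}}$, is too weak to close the energy estimate: applied with $g = u_x$ it produces $\|u\|_{H^s_{qp}}\|u\|_{H^{s+1}_{qp}}$, which is not controlled. What is needed (and what the paper proves as its fractional Leibniz rule) is the form $\|D^s_x(uv) - uD^s_x v\|_{L^2_{qp}} \lesssim \|u\|_{H^s_{qp}}\|v\|_{H^{s_0}_{qp}} + \|u\|_{H^{s_0+1}_{qp}}\|v\|_{H^{s-1}_{qp}}$ with $s_0 > N/2$, so that taking $v = u_x$ and $s_0 = s-1$ yields $\|u\|_{H^s_{qp}}^2$. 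Your later assertion that the nonlinearity contributes $\|u\|_{H^s_{qp}}^3$ shows you intend the correct estimate, but the version you state does not deliver it. With those two corrections the proposal matches the paper's proof in all essentials.
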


Even though in our analysis we routinely use that $N \geq 2$, when $N=1$ we recover the classical results of \cite{AbdelouhabBonaFellandSaut,Iorio}.
The method of the proof is by regularizing the original Benjamin-Ono equation by truncation in the Fourier space:
\begin{equation}\label{LWPeq1}
	(u)_t = \chi_n[(\chi_n u) (\chi_n u_{x})] + \chi_n[H u_{xx}],
\end{equation}
where operators $\chi_n$ and $H$ are defined via Fourier transform:
\begin{align*}
	\FT\chi_n[u](k) &= \mathbb{I}_{|\alpha \cdot k|<n} \FT u(k) \\
	\FT H[u](k) &= - i \sgn(\alpha \cdot k) \FT u(k),
\end{align*}
where $\mathbb{I}_\Omega(x)$ is the usual indicator function on a set $\Omega$.
The definition of the quasiperiodic Hilbert transform is not new and has already appeared in a number of works, see \cite{WilkeningZhaoJFM2021,WilkeningZhaoJNS2021}.

\textit{Outline of the paper:}
In Section 2, we explain some properties of standard Sobolev spaces to $H^s_{qp}$ spaces.
We also elaborate on convergence and divergence conditions for the quasiperiodic Fourier series in $H^s_{qp}$ spaces.
In Section 3, we prove existence of solutions to the regularized system, obtain uniform estimates on the approximate solutions, as well as a Cauchy property of these solutions.
We also prove the properties of data regularization in the method of Bona and Smith. 
Finally, in Section 4, we prove local well-posedness of the original problem.
In Section 5, we demonstrate that the standard conservation laws still hold in the quasiperiodic case, and yet that these conservation laws do not control the norms.

\textit{Notation:}
\begin{itemize}
	\item For a given $k \in \ZZ^N$, write $|k|$ to denote the Euclidean norm of $k$ and let $\jp{k} = \sqrt{1 + |k|^2}$.
	\item Inequality $A \lesssim B$ means that $A \leq CB$ for some constant $C>0.$
	To indicate dependence of constant parameters, say on $\epsilon,$ we write $A \lesssim_{\epsilon} B$.
	\item Asymptotic notation $f(x) = o(g(x))$ for $x$ close to $x_0$ means that $f(x)/g(x) \to 0$ as $x \to x_0$. 
\end{itemize}

\section{Preliminary tools}
The following two lemmas allow us to make sense of quasiperiodic Fourier series.
\begin{prop}[Convergence of quasiperiodic Fourier series in $\mathcal{S}'$]
Let $s > (N-1)/2$. Let $\psi \in \mathcal{S}'$, $f \in H^s_{qp}$. Then, $\langle f, \psi \rangle < \infty$.
\end{prop}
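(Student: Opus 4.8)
The plan is to interpret the statement as the assertion that, for every Schwartz function $\psi$, the quasiperiodic Fourier series of $f$ can be paired with $\psi$ term by term to give an absolutely convergent value bounded by $\norm{f}_{H^s_{qp}}$ (so that the partial sums converge in $\mathcal{S}'$). Since each exponential $x\mapsto\exp(i\alpha\cdot kx)$ is a tempered distribution, I would first write
\[
\jp{f,\psi}=\sum_{k\in\ZZ^N}\hat f(k)\int_{\RR}\exp(i\alpha\cdot kx)\,\psi(x)\D x=\sum_{k\in\ZZ^N}\hat f(k)\,\hat\psi(-\alpha\cdot k),
\]
reducing the claim to absolute convergence of this series (the sign convention for $\hat\psi$ is immaterial, only $\abs{\alpha\cdot k}$ enters below). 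Then, inserting the weight $\jp{|k|}^{s}$ and applying Cauchy--Schwarz in $\ell^{2}(\ZZ^N)$,
\[
\sum_{k}\abs{\hat f(k)}\,\abs{\hat\psi(-\alpha\cdot k)}\le\Big(\sum_{k}\jp{|k|}^{2s}\abs{\hat f(k)}^{2}\Big)^{1/2}\Big(\sum_{k}\jp{|k|}^{-2s}\abs{\hat\psi(-\alpha\cdot k)}^{2}\Big)^{1/2},
\]
where the first factor is exactly $\norm{f}_{H^s_{qp}}<\infty$; so the task becomes showing $\sum_{k}\jp{|k|}^{-2s}\abs{\hat\psi(-\alpha\cdot k)}^{2}<\infty$.

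Next, since $\hat\psi$ is Schwartz, $\abs{\hat\psi(\xi)}\lesssim_{M}(1+\abs{\xi})^{-M}$ for every $M>0$, so it will suffice to prove
\[
\sum_{k\in\ZZ^N}\jp{|k|}^{-2s}\,(1+\abs{\alpha\cdot k})^{-M}<\infty
\]
for some $M>1$. The naive bound $\abs{\hat\psi}\le\norm{\hat\psi}_{\infty}$ only yields convergence for $2s>N$, so the key is to exploit the decay of $\hat\psi$: although $\abs{\alpha\cdot k}$ can be arbitrarily small for large $k$, those $k$ lie in a thin slab about the hyperplane $\alpha\cdot x=0$. I would make this precise with a lattice-point count: since $\alpha_{j}=\alpha\cdot e_{j}\neq0$, freezing $k_{2},\dots,k_{N}$ restricts $k_{1}$ to an interval of length $2T/\abs{\alpha_{1}}$, giving
\[
\#\{k\in\ZZ^N:\abs{k}<R,\ \abs{\alpha\cdot k}<T\}\lesssim R^{N-1}(1+T).
\]

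Finally, I would decompose dyadically: split $\ZZ^N$ into shells $A_{j}=\{2^{j-1}\le\abs{k}<2^{j}\}$, on which $\jp{|k|}^{-2s}\sim2^{-2sj}$, and within $A_{j}$ separate the points with $\abs{\alpha\cdot k}<1$ from those with $2^{l}\le\abs{\alpha\cdot k}<2^{l+1}$, $0\le l\lesssim j$. The count above bounds the first group by $\lesssim2^{j(N-1)}$ and the $l$-th group by $\lesssim2^{j(N-1)+l}$, while the factor $(1+\abs{\alpha\cdot k})^{-M}$ contributes $\lesssim2^{-Ml}$ on the latter; summing the geometric series in $l$ (convergent because $M>1$) gives $\sum_{k\in A_{j}}\jp{|k|}^{-2s}(1+\abs{\alpha\cdot k})^{-M}\lesssim2^{(N-1-2s)j}$, and then summing over $j\ge0$ converges precisely when $2s>N-1$, i.e.\ $s>(N-1)/2$ — exactly the hypothesis. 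I expect the main obstacle to be this counting-and-dyadic step: one must see that the resonant lattice points near $\alpha\cdot x=0$ are sparse enough (at most $\lesssim R^{N-1}$ of them in a ball of radius $R$) to gain the extra half power beyond the trivial $s>N/2$ threshold; everything else is routine bookkeeping.
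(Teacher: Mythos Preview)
Your argument is correct and reaches the sharp threshold $s>(N-1)/2$, but it proceeds along a genuinely different route from the paper.

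The paper, after the same Cauchy--Schwarz reduction, bounds $\jp{k}^{-2s}\le\prod_{i=2}^{N}\jp{k_i}^{-2s/(N-1)}$ and then sums out the remaining variable $k_{1}$ first: for fixed $k_{2},\dots,k_{N}$ the series $\sum_{k_{1}\in\ZZ}\abs{\hat\psi(\alpha\cdot k)}^{2}$ is uniformly bounded by an integral test (essentially $\norm{\psi}_{L^{2}}^{2}$, independent of the other coordinates), after which the $(N-1)$ remaining sums factor and each converges because $2s/(N-1)>1$. Your approach instead quantifies the sparsity of lattice points near the hyperplane $\{\alpha\cdot x=0\}$ via the slab count $\#\{|k|<R,\ |\alpha\cdot k|<T\}\lesssim R^{N-1}(1+T)$ and feeds this into a double dyadic decomposition in $|k|$ and $|\alpha\cdot k|$.

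Both arguments exploit the same structural fact---one direction can be ``integrated out'' thanks to the decay of $\hat\psi$---but package it differently. The paper's version is shorter and uses only that $\hat\psi\in L^{2}$, not the full Schwartz decay you invoke. Your version is more geometric: the counting lemma makes explicit why the critical exponent is $(N-1)/2$ (the resonant set is effectively $(N-1)$-dimensional), and the machinery would adapt readily to other weights or to finer questions about the distribution of $\alpha\cdot k$ modulo~$1$. Either proof is perfectly acceptable here.
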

\begin{proof}
Using the definition of Fourier series and applying Fubini theorem to intechange the integral and the sum, we obtain
\begin{align*}
	\langle f, \psi \rangle &= \int_{-\infty}^{\infty} f(x) \psi(x) \D x \\
	&= \sum_{k \in \dot{\ZZ}^N} \hat{f}(k) \int_{-\infty}^{\infty} \psi(x) \exp(i \alpha \cdot k x) \D x \\
	&= \sum_{k \in \dot{\ZZ}^N} \hat{f}(k) \hat{\psi}(\alpha \cdot k).
\end{align*}
Adding the product $\jp{k}^s \jp{k}^{-s}$ and applying the Cauchy-Schwarz inequality yields
\[ 
|\langle f, \psi \rangle| \leq \| f \|_{H^s_{qp}} \left[\sum_{k \in \dot{\ZZ}^N}\jp{k}^{-2s} |\hat{\psi}|^2(\alpha \cdot k) \right]^{1/2}.
\]
Using the inequality 
\[\jp{k}^{-2s} \leq \prod_{i=2}^{N} \jp{k_i}^{-2s/(N-1)}\]
yields
\[ 
	\sum_{k \in \dot{\ZZ}^N}\jp{k}^{-2s} |\hat{\psi}|^2(\alpha \cdot k) \leq \sum_{k \in \dot{\ZZ}^N} \prod_{i=2}^{N} \jp{k_i}^{-2s/(N-1)} |\hat{\psi}|^2(\alpha \cdot k).
\]
Rearrange the summation to obtain
\[
	\sum_{k \in \dot{\ZZ}^N} \prod_{i=2}^{N} \jp{k_i}^{-2s/(N-1)} |\hat{\psi}|^2(\alpha \cdot k) \leq \sum_{i=2}^{N-1} \sum_{k_i \in \ZZ} \prod_{i=2}^{N} \jp{k_i}^{-2s/(N-1)} \sum_{k_1 \in \ZZ} |\hat{\psi}|^2(\alpha \cdot k).
\]
and note that the sum in $k_1$ converges by integral test, for we have 
\[
	\int_{k_1 \in \ZZ} |\hat{\psi}|^2(\alpha \cdot k) \sim_{\alpha} \| \phi \|_{L^2_x}
\]
and $\phi \in L^2_x.$
Furthermore, this bound is independent of $k_2, \ldots, k_N$, so that we are left with estimating 
\[
	\sum_{i=2}^{N-1} \sum_{k_i \in \ZZ} \prod_{i=2}^{N} \jp{k_i}^{-2s/(N-1)}.
\]
Since $2s/(N-1) >1$, each of these sums converges and we obtain $\langle f, \psi \rangle < \infty$.
\end{proof}

The lemma belows shows that the restriction $s > (N-1)/2$ is necessary.
\begin{prop}[Divergence of quasiperiodic Fourier series in $\mathcal{S}'$]
Let $s \leq (N-1)/2$. Then, there exists a sequence $\{ f_n \} \subset H^s_{qp}$ which satisfies the following:
\begin{itemize}
	\item $\sup_n \| f_n \|_{H^s_{qp}} < \infty,$
	\item for some $\phi \in \mathcal{D}, \langle f_n, \mathcal{F}_\chi^{-1} \phi \rangle = \langle \mathcal{F}_x f_n, \phi \rangle \to \infty$ as $n \to \infty$. 
\end{itemize}
\end{prop}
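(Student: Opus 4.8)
The plan is to prove the necessity of $s>(N-1)/2$ by exhibiting an explicit bounded sequence in $H^s_{qp}$ whose Fourier transforms test badly against a single fixed $\phi\in\mathcal D$ (we work with $N\geq 2$, as elsewhere). The mechanism behind the failure when $s\le(N-1)/2$ is a lattice-point count: inside any fixed bounded window of frequencies there are on the order of $R^{N-1}$ indices $k\in\ZZ^N$ with $|k|\le R$, and the Sobolev weight $\jp{k}^{2s}$ is too weak to control their accumulated contribution. To make this precise while avoiding a delicate count of $\{k:|k|\le R,\ |\alpha\cdot k|\le 1\}$, I would single out one coordinate. Since $\alpha\cdot e_1=\alpha_1\neq 0$, for each $m=(m_2,\dots,m_N)\in\ZZ^{N-1}\setminus\{0\}$ let $\kappa(m)\in\ZZ$ be a nearest integer to $-(\alpha_2 m_2+\dots+\alpha_N m_N)/\alpha_1$ and set $K(m)=(\kappa(m),m_2,\dots,m_N)$. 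Then $m\mapsto K(m)$ is injective, $K(m)\in\dot{\ZZ}^N$, one has $|\alpha\cdot K(m)|\le|\alpha_1|/2$ by construction, and $|m|\le|K(m)|\le C_\alpha|m|$ for some $C_\alpha\ge 1$; in particular $\jp{K(m)}$ and $\jp{m}$ are comparable with constants depending only on $\alpha$ (and, through the exponent below, on $s$).

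Fix $\phi\in\mathcal D(\RR)$ with $0\le\phi\le 1$ and $\phi\equiv 1$ on $[-|\alpha_1|/2,|\alpha_1|/2]$, put
\[
\beta_\ell=\jp{\ell}^{-s-(N-1)/2}\bigl(1+\log\jp{\ell}\bigr)^{-1}\qquad\text{for }\ell\in\Lambda:=\{K(m):m\in\ZZ^{N-1}\setminus\{0\}\},
\]
and define $f_n(x)=\sum_{0<|m|_\infty\le n}\beta_{K(m)}\exp(i\alpha\cdot K(m)x)$. Each $f_n$ is a trigonometric polynomial, hence lies in $H^t_{qp}$ for every $t$, with $\widehat{f_n}(K(m))=\beta_{K(m)}$ on $\{0<|m|_\infty\le n\}$ and $\widehat{f_n}=0$ on all other indices.

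It then remains to carry out two estimates. First, uniform boundedness in $H^s_{qp}$: using $\jp{K(m)}^{2s}\beta_{K(m)}^2=\jp{K(m)}^{-(N-1)}\bigl(1+\log\jp{K(m)}\bigr)^{-2}$ and the comparability above,
\[
\|f_n\|_{H^s_{qp}}^2=\sum_{0<|m|_\infty\le n}\jp{K(m)}^{2s}\beta_{K(m)}^2\ \lesssim_\alpha\ \sum_{m\in\ZZ^{N-1}\setminus\{0\}}\jp{m}^{-(N-1)}\bigl(1+\log\jp{m}\bigr)^{-2},
\]
and the last sum is finite by the integral test in $N-1$ dimensions, with a bound independent of $n$. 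Second, divergence of the pairing: since $|\alpha\cdot K(m)|\le|\alpha_1|/2$ we have $\phi(\alpha\cdot K(m))=1$, hence $\langle\mathcal F_x f_n,\phi\rangle=c_{\mathcal F}\sum_{0<|m|_\infty\le n}\beta_{K(m)}$, which is comparable (uniformly in $n$) to $\sum_{0<|m|_\infty\le n}\jp{m}^{-s-(N-1)/2}(1+\log\jp{m})^{-1}$. A dyadic decomposition in $m$ shows the block $2^{j-1}<|m|_\infty\le 2^j$ contributes an amount of size $\simeq 2^{j(N-1)}\cdot 2^{-j(s+(N-1)/2)}\cdot j^{-1}=2^{j((N-1)/2-s)}j^{-1}$, and since $(N-1)/2-s\ge 0$ by hypothesis, $\sum_j 2^{j((N-1)/2-s)}j^{-1}=+\infty$. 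Therefore $\langle f_n,\mathcal F_\chi^{-1}\phi\rangle=\langle\mathcal F_x f_n,\phi\rangle\to\infty$, as required.

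The only genuinely delicate point is the endpoint $s=(N-1)/2$: with a plain power weight $\beta_\ell=\jp{\ell}^{-s-(N-1)/2}$ the $H^s_{qp}$ norm would diverge logarithmically while the pairing also diverges only logarithmically, gaining nothing; the factor $(1+\log\jp{\ell})^{-1}$ is inserted precisely to push the norm to the convergent sum $\sum\jp{m}^{-(N-1)}(\log\jp{m})^{-2}$ while leaving the pairing at the borderline divergent sum $\sum\jp{m}^{-(N-1)}(\log\jp{m})^{-1}$. The other item needing a little care — and the reason for the explicit $\kappa(m)$ rather than a slab count — is that it yields exactly one index per $m$ with $\jp{K(m)}$ comparable to $\jp{m}$, so that all sums over $\Lambda$ reduce cleanly to sums over $\ZZ^{N-1}$. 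When $s<(N-1)/2$ strictly the logarithm is unnecessary (one could even use the single-scale sequence $f_n=n^{-(N-1)/2-\max(s,0)}\sum_{0<|m|_\infty\le n}\exp(i\alpha\cdot K(m)x)$), but I would present the $\log$-weighted construction since it handles all $s\le(N-1)/2$ at once.
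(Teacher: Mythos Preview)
Your proof is correct and rests on the same mechanism as the paper's: place the Fourier support of $f_n$ where $\alpha\cdot k$ lies in a fixed bounded window, choose a logarithmically corrected power weight so that the $H^s_{qp}$ norm converges while the pairing with a suitable $\phi$ diverges, and reduce both estimates to borderline $(N-1)$-dimensional sums/integrals.

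The execution differs in two ways. First, the paper works with the slab $\{k:2|\alpha_N|\le|\alpha\cdot k|\le 4|\alpha_N|\}$ and estimates the relevant sums by an integral test over that slab, changing variables so that $x_N$ ranges over an interval of fixed length determined by $\bar x=(x_1,\dots,x_{N-1})$; you instead parametrize explicitly by the section $m\mapsto K(m)$, which produces exactly one index per $m\in\ZZ^{N-1}\setminus\{0\}$ with $\jp{K(m)}\simeq\jp m$ and reduces everything directly to sums over $\ZZ^{N-1}$. Your route is a bit more elementary (no slab volume computation) at the cost of introducing the nearest-integer map. Second, the paper uses the single weight $\jp{k}^{1-N}(\log\jp{k})^{-1}$ for every $s\le(N-1)/2$, whereas your weight $\jp{\ell}^{-s-(N-1)/2}(1+\log\jp{\ell})^{-1}$ depends on $s$; both choices work, and at the endpoint $s=(N-1)/2$ they coincide.
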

\begin{proof}
We follow the lines of proof of \cite[Lemma 5.2]{TsugawaKdV}, but adapted to the $H^{s}_{qp}$ spaces, and supplying further details.
Without loss of generality, assume that the entries of $\alpha$ are all positive, i.e. $\alpha_i > 0$ for $i=1,\ldots, N$.
Let $\bar{k}$ denote a vector of the first $N-1$ entries of $k$, i.e. $\bar{k} = (k_1, \ldots, k_{N-1})$.

Let 
\begin{align*}
	A_n &= \{ x \in \RR^N: \jp{x} \leq n, 2|\alpha_N| \leq | \alpha \cdot x | \leq 4 |\alpha_N| \}, \\
	A &= \{ x \in \RR^N: 2|\alpha_N| \leq | \alpha \cdot x | \leq 4 |\alpha_N| \}, \\
	\hat{f_n}(k) &= \begin{cases} 
		\jp{k}^{1- N} (\log \jp{k})^{-1}, &\text{ if } k \in \dot{\ZZ}^N \cap A_n, \\
		0 &\text{ else }.
	\end{cases}
\end{align*}
Let $\phi(\xi) \in \mathcal{D}$ such that $\phi(\xi) = 1$ for $2 |\alpha_N| \leq | \xi | \leq 4 |\alpha_N|$ and $\phi(\xi) = 0$ for $k \leq |\alpha_N|$ or $k \geq 5|\alpha_N|.$
Since $\phi$ is in $\mathcal{D}$, it is also Schwartz. 
Since Fourier transform is an isometry on the Schwartz space, we have that $\mathcal{F}_\chi^{-1} \phi$ is also Schwartz.

Now, use the Fourier series and Fubini theorem to write 
\begin{align*}
	\langle f_n, \mathcal{F}_\chi^{-1} \phi \rangle &= \int_{-\infty}^{\infty} f_n(x) \mathcal{F}_\chi^{-1} \phi(x) \D x \\
	&= \sum_{k \in \cdot{\ZZ}^N} \hat{f_n}(k) \int_{-\infty}^{\infty} \mathcal{F}_\chi^{-1} \phi(x) \exp(i \alpha \cdot k x) \D x \\
	&= \sum_{k \in \cdot{\ZZ}^N} \hat{f_n}(k) \phi(\alpha \cdot k) \\
	&= \sum_{k \in \ZZ^N \cap A_n, |k| \geq 1} \jp{k}^{1- N} (\log \jp{k})^{-1}.
\end{align*}
As $n\to \infty,$ we need to show that the limit of the above sums diverges, i.e. that 
\[\sum_{k \in \ZZ^N, |k| \geq 1, k \in A} \jp{k}^{1- N} (\log \jp{k})^{-1} = \infty. \]
We prove divergence by integral test.
Indeed, consider the integral
\[\int_{x \in \RR^N, |x| \geq 1, x \in A} \jp{x}^{1- N} (\log \jp{x})^{-1} \D x. \]
Write $A$ as a disjoint union of $A_{+}$ and $A_{-}$, where 
\[
A_{\pm} = \{ x \in \RR^N: 2|\alpha_N| \leq \pm \alpha \cdot x \leq 4 |\alpha_N| \}.
\]
Then, it follows that 
\begin{align*}
	&\int_{x \in \RR^N, |x| \geq 1, x \in A} \jp{x}^{1- N} (\log \jp{x})^{-1} \D x \\
	&= \int_{x \in \RR^N, |x| \geq 1, x \in A_{+}} \jp{x}^{1- N} (\log \jp{x})^{-1} \D x + \int_{x \in \RR^N, |x| \geq 1, x \in A_{-}} \jp{x}^{1- N} (\log \jp{x})^{-1} \D x.
\end{align*}
We show that the integral over $A_{+}$ diverges; the argument for $A_{-}$ is almost the same.
Note that for $x\in A_{+}$, we have $2 \alpha_N \leq \bar{\alpha} \cdot \bar{x} + \alpha_N x_N \leq 4 \alpha_N.$
Subtracting $\bar{\alpha} \cdot \bar{x}$ and dividing both sides by $\alpha_N$ yields 
\[ 
2 - \frac{\bar{\alpha} \cdot \bar{x}}{\alpha_N} \leq x_N \leq 4 - \frac{\bar{\alpha} \cdot \bar{x}}{\alpha_N}.
\]
We then obtain that $\jp{x}^2 = 1+ |\bar{x}|^2 + x_N^2 \leq C^2 \jp{\bar{x}}^2,$ where $C$ is some positive constant with $C >2$.
Now, the integral over $A_{+}$ becomes
\begin{align*}
	\int_{x \in \RR^N, |x| \geq 1, x \in A_{+}} &\jp{x}^{1- N} (\log \jp{x})^{-1} \D x \\
	&= \int_{x_1} \ldots \int_{x_{N-1}} \int_{2 - \frac{\bar{\alpha} \cdot \bar{x}}{\alpha_N}}^{4 - \frac{\bar{\alpha} \cdot \bar{x}}{\alpha_N}}\jp{x}^{1- N} (\log \jp{x})^{-1} \D x_N \D x_{N-1} \ldots \D x_{1}.
\end{align*}
Since $\jp{x} \leq C \jp{\bar{x}}$, we bound the integrand from below by 
\begin{align*}
	\int_{x_1} &\ldots \int_{x_{N-1}} \int_{2 - \frac{\bar{\alpha} \cdot \bar{x}}{\alpha_N}}^{4 - \frac{\bar{\alpha} \cdot \bar{x}}{\alpha_N}}\jp{x}^{1- N} (\log \jp{x})^{-1} \D x_N \D x_{N-1} \ldots \D x_{1}\\
	&\geq \int_{x_1} \ldots \int_{x_{N-1}} \int_{2 - \frac{\bar{\alpha} \cdot \bar{x}}{\alpha_N}}^{4 - \frac{\bar{\alpha} \cdot \bar{x}}{\alpha_N}}\frac{1}{C\jp{\bar{x}}^{N-1} (\log C+ \log \jp{\bar{x}})} \D x_N \D x_{N-1} \ldots \D x_{1} \\
	&= \frac{2}{C}\int_{x_1} \ldots \int_{x_{N-1}} \frac{1}{\jp{\bar{x}}^{N-1} (\log C+ \log \jp{\bar{x}})} \D x_{N-1} \ldots \D x_{1},
\end{align*}
where we have used the fact that the integrand no longer depends on $x_N$.
For simplicity, relabel $x := \bar{x}$, so that we are left with showing that 
\[
	\int_{x \in \RR^{N-1}} \frac{1}{\jp{x}^{N-1} (\log C+ \log \jp{x})} \D x
\]
diverges.
This is straightforward. First, restrict the integration region to $\{ x \in\RR^{N-1}: |x| > C \}$: 
\[
	\int_{x \in \RR^{N-1}} \frac{1}{\jp{x}^{N-1} (\log C+ \log \jp{x})} \D x \geq \int_{x \in \RR^{N-1}: |x| > C} \frac{1}{\jp{x}^{N-1} (\log C+ \log \jp{x})} \D x.
\]
We can then bound $\log C+ \log \jp{x} \leq 2 \log \jp{x}$, so that
\[
	\int_{x \in \RR^{N-1}} \frac{1}{\jp{x}^{N-1} (\log C+ \log \jp{x})} \D x \geq \frac{1}{2}\int_{x \in \RR^{N-1}: |x| > C} \frac{1}{\jp{x}^{N-1} \log \jp{x}} \D x.
\]
Since $C>2$, we have 
\[ \jp{x}^{N-1} \log \jp{x} < 2|x|^{N-1} (\log 2 |x|) < 4 |x|^{N-1} \log |x|. \]
Therefore, we need to estimate 
\[
\int_{x \in \RR^{N-1}: |x| > C} \frac{1}{|x|^{N-1} \log |x|} \D x.
\]
Using polar coordinates yields that 
\[
\int_{x \in \RR^{N-1}: |x| > C} \frac{1}{|x|^{N-1} \log |x|} \D x \sim_{N-1} \int_{C}^\infty \frac{r^{N-2}}{r^{N-1} \log r} \D r = \int_{C}^\infty \frac{1}{r \log r} \D r. 
\]
The latter integral diverges, as can be shown by substitution $y = \log r$.

To show $\sup_n \| f_n \|_{H^s_{qp}} < \infty,$ note that
\begin{align*}
	\sup_n \| f_n \|_{H^s_{qp}}^2 &= \sup_n \sum_{k \in \ZZ^N, |k|\geq 1} \jp{k}^{2s} \abs{\hat{f_n}(k)}^2 \\
	&= \sup_n \sum_{k \in A_n} \jp{k}^{2s} \jp{k}^{2(1- N)} (\log \jp{k})^{-2} \\
	&\leq \sum_{\substack{ k \in \dot{\ZZ}^N  \\ 2|\alpha_N| \leq | \alpha \cdot k | \leq 4 |\alpha_N|}} \jp{k}^{2(s+1-N)} (\log \jp{k})^{-2}.
\end{align*}
We split the summation as follows:
\[
	\sum_{\substack{ k \in \dot{\ZZ}^N  \\ 2|\alpha_N| \leq | \alpha \cdot k | \leq 4 |\alpha_N|}} 
= 
\sum_{\substack{ k \in \dot{\ZZ}^N  \\ 2|\alpha_N| \leq | \alpha \cdot k | \leq 4 |\alpha_N| \\ |\bar{k}|\geq 1}} 
+
\sum_{\substack{ k \in \dot{\ZZ}^N  \\ 2|\alpha_N| \leq | \alpha \cdot k | \leq 4 |\alpha_N|\\ |\bar{k}| < 1}}.
\]
Note that the second sum over $|\bar{k}| < 1$ has only 6 summands. 
Since $k \in \dot{\ZZ}^N,$ we have $|k| \geq 1$.
But $|\bar{k}|< 1$ implies $k_1 = \ldots = k_{N-1} = 0,$ so that $|k_N| > 1$.
Now, the condition $2|\alpha_N| \leq | \alpha \cdot k | \leq 4 |\alpha_N|$ over this subset implies that $2 \leq |k_N| \leq 4$, and there are only 6 elements that satisfy this condition.
Hence, the sum over $k$'s such that $|\bar{k}| < 1$ is bounded, and we only need to show that the first sum converges.
For this, by integral test it is enough to show that the integral
\[
\int_{\substack{x \in \RR^N, |x| \geq 1 \\ x\in A \\ |\bar{x}| \geq 1}} \jp{x}^{2(s+1-N)} (\log \jp{x})^{-2} \D x
\]
converges.
Since we can write the region $A$ as disjoint union of $A_{+}$ and $A_{-}$, we only consider the integral of $A_{+}$. 
As before, over $A_{+}$ we know that 
\[ 
2 - \frac{\bar{\alpha} \cdot \bar{x}}{\alpha_N} \leq x_N \leq 4 - \frac{\bar{\alpha} \cdot \bar{x}}{\alpha_N},
\]
so that 
\begin{align*}
	\int_{\substack{x \in \RR^N \cap A_{+}, \\ |x| \geq 1, |\bar{x}| \geq 1}} &\jp{x}^{2(s+1-N)} (\log \jp{x})^{-2} \D x \\
	&= \int_{x_1} \ldots \int_{x_{N-1}} \int_{2 - \frac{\bar{\alpha} \cdot \bar{x}}{\alpha_N}}^{4 - \frac{\bar{\alpha} \cdot \bar{x}}{\alpha_N}}\jp{x}^{2(s+1-N)} (\log \jp{x})^{-2} \D x_N \D x_{N-1} \ldots \D x_{1}.
\end{align*}
Since $\jp{\bar{x}} \leq \jp{x}$, we can bound the integrand $\jp{x}^{2(s+1-N)} (\log \jp{x})^{-2}$ by $\jp{\bar{x}}^{2(s+1-N)} (\log \jp{\bar{x}})^{-2}.$
Therefore, 
\begin{align*}
	\int_{\substack{x \in \RR^N, |x| \geq 1 \\ x\in A_{+} \\ |\bar{x}| \geq 1}} &\jp{x}^{2(s+1-N)} (\log \jp{x})^{-2} \D x \\
	&\leq 2 \int_{x_1} \ldots \int_{x_{N-1}} \jp{\bar{x}}^{2(s+1-N)} (\log \jp{\bar{x}})^{-2} \D x_{N-1} \ldots \D x_{1}\\
	&= 2 \int_{x \in \RR^{N-1}, |x|\geq 1}\jp{x}^{2(s+1-N)} (\log \jp{x})^{-2} \D x,
\end{align*}
where we have relabeled $x := \bar{x}$ for convenience.
The last integral can be handled by polar coordinates: we have
\[
\int_{x \in \RR^{N-1}, |x|\geq 1} \jp{x}^{2(s+1-N)} (\log \jp{x})^{-2} \D x \sim \int_{1}^{\infty} r^{N-2}\jp{r}^{2(s+1-N)} (\log \jp{r})^{-2} \D r.
\]
Since $r < \jp{r}$, the latter is bounded above by 
\[
	\int_{1}^{\infty} r^{N-2}\jp{r}^{2(s+1-N)} (\log \jp{r})^{-2} \D r \leq \int_{1}^{\infty} r^{N-2 + 2(s+1-N)}  (\log r)^{-2} \D r =	\int_{1}^{\infty} r^{2s-N}  (\log r)^{-2} \D r.
\]
Since $s \leq (N-1)/2$, we have $2s \leq N-1,$ so that 
\[
	\int_{1}^{\infty} r^{2s-N}  (\log r)^{-2} \D r \leq \int_{1}^{\infty} r^{-1}  (\log r)^{-2} \D r.
\]
The last integral is bounded; this can be shown by substitution $y = \log r$.
\end{proof}

With more regularity, a quasiperiodic Fourier series makes sense pointwise.
\begin{prop}[Sobolev inequality, $L^\infty$ bound]\label{SobolevInequality}
Assume $s > N/2$. Let $u \in H^{s}_{qp}.$ Then, 
\[
\sum_{k \in \ZZ^N} |\hat{u}|(k) \lesssim \| u\|_{H^{s}_{qp}}.
\]
Furthermore, $u$ is bounded and continuous on $\RR$.
\end{prop}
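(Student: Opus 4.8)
The plan is to prove the $\ell^1$-summability of the Fourier coefficients by a Cauchy--Schwarz argument against the weight $\jp{k}^{-s}$, and then to read off boundedness and continuity from absolute and uniform convergence of the series. For the first step I would split each summand as $|\hat u|(k) = \jp{k}^{s}|\hat u|(k)\cdot\jp{k}^{-s}$ and apply the Cauchy--Schwarz inequality on $\ell^2(\ZZ^N)$:
\[
\sum_{k\in\ZZ^N}|\hat u|(k) \le \Big(\sum_{k\in\ZZ^N}\jp{k}^{2s}|\hat u|^2(k)\Big)^{1/2}\Big(\sum_{k\in\ZZ^N}\jp{k}^{-2s}\Big)^{1/2} = \|u\|_{H^s_{qp}}\Big(\sum_{k\in\ZZ^N}\jp{k}^{-2s}\Big)^{1/2},
\]
so the claimed inequality follows once we know the lattice sum $\sum_{k\in\ZZ^N}\jp{k}^{-2s}$ is finite for $s>N/2$.

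To see the finiteness, I would group the lattice points by the value of their sup-norm $m=\|k\|_\infty\in\NNzero$: there are $O(m^{N-1})$ points $k\in\ZZ^N$ with $\|k\|_\infty=m$, and $\jp{k}\gtrsim 1+m$ for each such $k$. Hence
\[
\sum_{k\in\ZZ^N}\jp{k}^{-2s} \lesssim_N \sum_{m\ge 0}(1+m)^{N-1}(1+m)^{-2s} = \sum_{m\ge 0}(1+m)^{N-1-2s},
\]
and the last series converges precisely when $N-1-2s<-1$, i.e. when $s>N/2$. (Equivalently one compares with $\int_{\RR^N}\jp{x}^{-2s}\D x\sim_N\int_0^\infty r^{N-1}(1+r^2)^{-s}\D r$, which converges under the same condition.) This is the only place where the hypothesis $s>N/2$ is actually used, and it is slightly stronger than the threshold $s>(N-1)/2$ of the first proposition, consistent with the passage from $\mathcal S'$-convergence to genuine pointwise control.

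Finally, with $\sum_k|\hat u|(k)<\infty$ established, each term of the series $\sum_{k\in\ZZ^N}\hat u(k)\exp(i\alpha\cdot kx)$ is continuous in $x$ with $\sup_{x\in\RR}|\hat u(k)\exp(i\alpha\cdot kx)|=|\hat u|(k)$, so by the Weierstrass $M$-test the series converges absolutely and uniformly on $\RR$; a uniform limit of continuous functions is continuous, so $u\in C(\RR)$, and $\|u\|_{L^\infty(\RR)}\le\sum_k|\hat u|(k)\lesssim\|u\|_{H^s_{qp}}$, giving boundedness. Since uniform convergence on $\RR$ implies convergence when tested against any Schwartz function, this $u$ coincides with the tempered distribution produced by the first proposition, so there is no ambiguity about the object under discussion. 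The argument is essentially routine; the only step requiring any care is the comparison in the second paragraph, where one must verify that the multi-dimensional lattice sum has exactly the convergence threshold $s=N/2$.
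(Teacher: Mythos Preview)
Your proof is correct and is precisely the standard Cauchy--Schwarz argument that the paper defers to by citing \cite[Lemma 6.5]{follandPDEs}; the paper gives no further details beyond that reference. Your treatment of the lattice-sum convergence and the Weierstrass $M$-test for continuity fills in exactly what Folland's argument supplies in the periodic case.
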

\begin{proof}
The Sobolev inequality is proved by following the arguments of \cite[Lemma 6.5]{follandPDEs}.
\end{proof}

\begin{prop}[Algebra property]
Assume $s > N/2.$ Let $u, v \in H^{s}_{qp}.$ Then  
\[
\| uv\|_{H^{s}_{qp}} \lesssim_s \| u\|_{H^{s}_{qp}} \| v\|_{H^{s}_{qp}}.
\]
\end{prop}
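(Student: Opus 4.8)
The plan is to mimic the classical proof of the Sobolev algebra property (as in the periodic or $\RR^n$ setting), working entirely on the Fourier-coefficient side. Write $\widehat{uv}(k) = \sum_{\ell \in \ZZ^N} \hat u(\ell)\hat v(k-\ell)$; this convolution identity holds because the frequencies $\alpha\cdot k$ are distinct for distinct $k\in\ZZ^N$ (the condition $\alpha\cdot k \neq 0$ for $k\neq 0$ in fact gives injectivity of $k\mapsto \alpha\cdot k$), so multiplying the two quasiperiodic series and collecting the exponential $\exp(i\alpha\cdot k x)$ recovers exactly the lattice convolution in $\ZZ^N$. Thus the quasiperiodic problem reduces to the ordinary weighted-$\ell^2$ convolution estimate on $\ZZ^N$ with weight $\jp{|k|}^{s}$, and the geometry of the $\alpha_j$'s plays no further role.

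The core step is the Peetre-type inequality
\[
\jp{|k|}^{s} \lesssim_s \jp{|\ell|}^{s} + \jp{|k-\ell|}^{s},
\]
valid for $s\geq 0$ (and here $s > N/2 > 0$), which follows from $|k| \leq |\ell| + |k-\ell|$ and convexity/subadditivity of $t\mapsto t^s$ up to constants. Applying this inside $\widehat{uv}(k)$ and using the triangle inequality in $\ell^2_k$ splits $\|uv\|_{H^s_{qp}}$ into two symmetric pieces; estimating the first,
\[
\Big\| \sum_{\ell} \jp{|\ell|}^{s}|\hat u(\ell)|\,|\hat v(k-\ell)| \Big\|_{\ell^2_k}
= \big\| (\jp{\cdot}^{s}|\hat u|) * |\hat v| \big\|_{\ell^2}
\leq \big\| \jp{\cdot}^{s}|\hat u| \big\|_{\ell^2}\,\big\| \hat v \big\|_{\ell^1}
= \|u\|_{H^s_{qp}}\,\|\hat v\|_{\ell^1},
\]
by Young's convolution inequality on the group $\ZZ^N$ ($\ell^2 * \ell^1 \hookrightarrow \ell^2$). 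Now invoke Proposition~\ref{SobolevInequality}, which is exactly the statement that $\|\hat v\|_{\ell^1} = \sum_{k}|\hat v(k)| \lesssim \|v\|_{H^s_{qp}}$ when $s > N/2$; this is precisely where the hypothesis $s > N/2$ is used. The second piece is handled identically with the roles of $u$ and $v$ exchanged, yielding $\|uv\|_{H^s_{qp}} \lesssim_s \|u\|_{H^s_{qp}}\|v\|_{H^s_{qp}}$.

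There is essentially no serious obstacle here; the only point requiring a little care is the justification of the convolution formula for $\widehat{uv}$, i.e. that the formal product of the two absolutely convergent series (absolute convergence of $\sum|\hat u(k)|$ and $\sum |\hat v(k)|$ again coming from Proposition~\ref{SobolevInequality}) may be rearranged into the claimed lattice convolution, and that no collisions among the frequencies $\{\alpha\cdot k : k\in\ZZ^N\}$ cause different lattice points to contribute to the same exponential in a way not captured by the $\ZZ^N$-convolution. Since $k\mapsto \alpha\cdot k$ is injective on $\ZZ^N$, the exponentials $\exp(i\alpha\cdot k x)$ are linearly independent and the identification of Fourier coefficients is unambiguous, so the rearrangement is legitimate by absolute convergence. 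Once this is in place the estimate is the standard one and the proof is complete.
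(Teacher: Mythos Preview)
Your proof is correct and follows exactly the standard argument the paper points to (the reference to \cite[Chapter~6, Exercise~4]{follandPDEs}): the Peetre-type splitting $\jp{|k|}^s \lesssim_s \jp{|\ell|}^s + \jp{|k-\ell|}^s$, Young's convolution inequality on $\ZZ^N$, and the Sobolev embedding $\ell^1 \hookleftarrow H^s_{qp}$ of Proposition~\ref{SobolevInequality}. Your extra care in justifying the convolution formula via injectivity of $k\mapsto \alpha\cdot k$ and absolute convergence is a welcome addition that the paper leaves implicit.
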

\begin{proof}
The algebra property is obtained by following the arguments of \cite[Chapter 6, Exercise 4]{follandPDEs}. 
\end{proof}

\begin{prop}[Fractional Leibniz rule]\label{FractionalLeibniz}
Define an operator $D^s_x$ by $\FT{D^s_x u} (k)= |k|^s \hat{u}(k).$ 

Let $s > 1$ and $s_0 > N/2$.
Let $u \in H^s_{qp} \cap H^{s_0 + 1}_{qp}$ and $v \in H^{s_0}_{qp} \cap H^{s-1}_{qp}$.
Then,
\[
\| D^s_x (uv) - u D^s_x v \|_{L^2_{qp}} \lesssim \| u \|_{H^{s}_{qp}} \| v\|_{H^{s_0}_{qp}} + \| u \|_{H^{s_0+1}_{qp}} \| v\|_{H^{s-1}_{qp}}.
\]
\end{prop}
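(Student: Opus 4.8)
The plan is to carry out a discrete Kato--Ponce commutator estimate on the Fourier side. First I would record the convolution formula for quasiperiodic Fourier coefficients, $\widehat{uv}(k) = \sum_{j \in \ZZ^N} \hat u(j)\,\hat v(k-j)$, which is legitimate because the map $k \mapsto \alpha\cdot k$ is injective on $\ZZ^N$ (a consequence of $\alpha\cdot k\neq 0$ for $k\in\dot{\ZZ}^N$) and both series involved are absolutely convergent, since $\sum_j |\hat u(j)| \lesssim \norm{u}_{H^{s_0}_{qp}} < \infty$ and $\sum_j |\hat v(j)| \lesssim \norm{v}_{H^{s_0}_{qp}} < \infty$ by Proposition~\ref{SobolevInequality} (this is the same input as in the proof of the algebra property). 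Combining this with the definition of $D^s_x$ and reindexing $j \mapsto k-j$ gives
\[
\FT\!\big(D^s_x(uv) - u\,D^s_x v\big)(k) \;=\; \sum_{j \in \ZZ^N}\big(|k|^s - |j|^s\big)\,\hat u(k-j)\,\hat v(j).
\]
Since $\norm{\,\cdot\,}_{L^2_{qp}} = \norm{\,\cdot\,}_{H^0_{qp}}$ is precisely the $\ell^2(\ZZ^N)$ norm of the Fourier coefficients, it suffices to bound the $\ell^2$ norm of the right-hand side.

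Next I would establish the elementary pointwise bound
\[
\big||k|^s - |j|^s\big| \;\lesssim_s\; |k-j|^s + |k-j|\,|j|^{s-1}, \qquad k,j\in\ZZ^N,
\]
valid for $s>1$. It follows from $|a^s - b^s| \le s\,\max(a,b)^{s-1}\,|a-b|$ for $a,b\ge 0$, applied with $a = |k|$, $b = |j|$, combined with the reverse triangle inequality $\big||k|-|j|\big| \le |k-j|$ and with $\max(|k|,|j|)^{s-1} \le (|j|+|k-j|)^{s-1} \lesssim_s |j|^{s-1} + |k-j|^{s-1}$, where $s-1>0$ is used in the last step.

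Inserting this bound, the commutator coefficient is dominated by $F_1(k)+F_2(k)$, where $F_1(k) = \sum_j |k-j|^s\,|\hat u(k-j)|\,|\hat v(j)|$ and $F_2(k) = \sum_j |k-j|\,|j|^{s-1}\,|\hat u(k-j)|\,|\hat v(j)|$. For $F_1$ I would estimate $|k-j|^s|\hat u(k-j)| \le \jp{k-j}^s|\hat u(k-j)|$ and write $|\hat v(j)| = \jp{j}^{-s_0}\big(\jp{j}^{s_0}|\hat v(j)|\big)$, so that $F_1$ is the convolution of the $\ell^2$ sequence $\big(\jp{m}^s|\hat u(m)|\big)_m$ with the $\ell^1$ sequence $\big(\jp{j}^{-s_0}\,\jp{j}^{s_0}|\hat v(j)|\big)_j$; by Young's inequality $\norm{f*g}_{\ell^2}\le\norm{f}_{\ell^2}\norm{g}_{\ell^1}$ and Cauchy--Schwarz,
\[
\norm{F_1}_{\ell^2} \;\le\; \norm{u}_{H^s_{qp}}\Big(\sum_{m\in\ZZ^N}\jp{m}^{-2s_0}\Big)^{1/2}\norm{v}_{H^{s_0}_{qp}},
\]
and the middle factor is finite precisely because $s_0 > N/2$. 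For $F_2$ I would instead assign the $\jp{\,\cdot\,}^{-s_0}$ weight to the $u$-factor, writing $|k-j|\,|\hat u(k-j)| \le \jp{k-j}^{-s_0}\,\jp{k-j}^{s_0+1}|\hat u(k-j)|$ and $|j|^{s-1}|\hat v(j)| \le \jp{j}^{s-1}|\hat v(j)|$; the same two-line argument then yields $\norm{F_2}_{\ell^2} \lesssim_{s_0,N}\norm{u}_{H^{s_0+1}_{qp}}\norm{v}_{H^{s-1}_{qp}}$. Adding the two estimates gives the claimed inequality.

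I do not expect a genuine obstacle here: the argument is the standard discrete commutator estimate, and the only quasiperiodicity-specific ingredients are the convolution identity for the Fourier coefficients and the identification of $\norm{\,\cdot\,}_{L^2_{qp}}$ with an $\ell^2(\ZZ^N)$ norm, both routine from the earlier sections. The one point requiring care is the bookkeeping of which $\jp{\,\cdot\,}^{-s_0}$ weight is assigned to $u$ and which to $v$ in the two pieces $F_1,F_2$, since this is exactly what produces the two distinct product terms on the right-hand side; an incorrect split would force norms not among the hypotheses. The assumptions $s>1$ and $s_0>N/2$ enter only in the two places flagged above --- the pointwise inequality and the summability of $\big(\jp{m}^{-2s_0}\big)_m$ over $\ZZ^N$ --- so no further restrictions are needed.
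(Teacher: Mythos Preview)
Your proof is correct and follows essentially the same route as the paper: write the commutator on the Fourier side, apply the pointwise bound $\big||k|^s - |j|^s\big| \lesssim_s |k-j|^s + |k-j|\,|j|^{s-1}$ (the paper cites \cite{SautTemam} for this, whereas you give the elementary mean-value-theorem derivation), split into two convolutions, and estimate each via Young's inequality together with the summability of $\jp{m}^{-2s_0}$ over $\ZZ^N$. The only cosmetic differences are your reindexing $j\mapsto k-j$ and your explicit Cauchy--Schwarz step in place of the paper's appeal to Proposition~\ref{SobolevInequality}, which amount to the same thing.
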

\begin{proof}
To begin, use the convolution theorem to write:
\begin{align*}
	\FT{(D^s_x (uv) - u D^s_x v)}(k) &= |k|^s \FT{uv}(k) - \FT{u D^s_x v}(k) \\
	&= |k|^s \sum_{j \in \ZZ^N} \hat{u}(j)\hat{v}(k-j) - \sum_{j \in \ZZ^N} \hat{u}(j) |k-j|^s \hat{v}(k-j) \\
	&= \sum_{j \in \ZZ^N} \big[ |k|^s - |k-j|^s \big]\hat{u}(j)  \hat{v}(k-j).
\end{align*}
Thus, 
\begin{align*}
	\| D^s_x (uv) - u D^s_x v \|_{L^2_{qp}}^2 &= \sum_{k \in \ZZ^N} \abs{ \sum_{j \in \ZZ^N} \big[ |k|^s - |k-j|^s \big]\hat{u}(j)  \hat{v}(k-j)}^2 \\
	&\leq \sum_{k \in \ZZ^N} \left| \sum_{j \in \ZZ^N} \abs{ |k|^s - |k-j|^s} |\hat{u}(j)|  |\hat{v}(k-j)|\right|^2.
\end{align*}
Using the following inequality, from \cite{SautTemam},
\[ 
	\abs{ |k|^s - |k-j|^s} \lesssim_{s} |j|^{s} + |k-j|^{s-1} |j|,
\]
as well as triangle inequality yields the following bound 
\begin{align*}
	&\| D^s_x (uv) - u D^s_x v \|_{L^2_{qp}}^2 \\
	&\qquad \lesssim \sum_{k \in \ZZ^N} \left| \sum_{j \in \ZZ^N}|j|^{s}|\hat{u}(j)|  |\hat{v}(k-j)| \right|^2 + \sum_{k\in\ZZ^N}\left| \sum_{j \in \ZZ^N}| |k-j|^{s-1} | \alpha \cdot j| |\hat{u}(j)|  |\hat{v}(k-j)|\right|^2.
\end{align*}
It remains to bound the two sums. 

One can bound the first sum by Young's convolution inequality:
\[
	\sum_{k \in \ZZ^N} \left| \sum_{j \in \ZZ^N}|j|^{s}|\hat{u}(j)|  |\hat{v}(k-j)|\right|^2 
	\lesssim \left( \sum_{k \in \ZZ^N} |j|^{2s}|\hat{u}(j)|^2 \right) \left(\sum_{k \in \ZZ^N} |\hat{v}(k)| \right)^2.
\]
For any $s_0 > N/2,$ Sobolev inequality (Proposition \ref{SobolevInequality}) yields
\[ 
\left(\sum_{k \in \ZZ^N} |\hat{v}(k)| \right)^2 \lesssim \| v \|_{H^{s_0}_{qp}}^2,
\]
so that the first sum is bounded by 
\[ 
\| D^s_x u \|_{L^2_{qp}}^2 \| v \|_{H^{s_0}_{qp}}^2 \lesssim \| u \|_{H^s_{qp}}^2 \| v \|_{H^{s_0}_{qp}}^2.
\]
For the second sum, we similarly apply Young's convolution inequality to obtain
\[
	\sum_{k\in\ZZ^N}\left|\sum_{j \in \ZZ^N} |k-j|^{s-1} | \alpha \cdot j| |\hat{u}(j)|  |\hat{v}(k-j)|\right|^2 \lesssim 
	\left( \sum_{k \in \ZZ^N} |\alpha \cdot k| |\hat{u}(k)| \right)^2 \left(\sum_{k \in \ZZ^N} \left| k\right|^{2(s-1)} |\hat{v}(k)|^2 \right).
\]
One then applies the Sobolev inequality (Proposition \ref{SobolevInequality}, leading to 
\[ 
	\sum_{k \in \ZZ^N} |\alpha \cdot k| |\hat{u}(k)| \lesssim \| u \|_{H^{s_0 + 1}_{qp}}.
\]
This lets us bound the second sum by 
\[ 
	\| u \|_{H^{s_0 + 1}_{qp}}^2 \| D^{s-1}_{x}v\|_{L^2_{qp}}^2 \lesssim \| u \|_{H^{s_0 + 1}_{qp}}^2 \| v \|_{H^{s-1}_{qp}}^2.
\]
Therefore, we obtain 
\[ 
	\| D^s_x (uv) - u D^s_x v \|_{L^2_{qp}}^2 \lesssim  \| u \|_{H^s_{qp}}^2 \| v \|_{H^{s_0}_{qp}}^2 + \| u \|_{H^{s_0 + 1}_{qp}}^2 \| v \|_{H^{s-1}_{qp}}^2,
\]
as desired.
\end{proof}

Another tool to be used in the analysis is the interpolation inequality.
\begin{lem}\label{InterpolationInequality}
Let $N \in \NN.$ Let $l > N/2$ and let $u \in H^{l}_{qp}.$ Then, for any $p$ with $p\leq l,$ we have
\[ 
\|u\|_{H^{p}_{qp}} \leq \|u\|_{H^{l}_{qp}}^{p/l} \|u\|_{H^{0}_{qp}}^{1-p/l}.
\]
\end{lem}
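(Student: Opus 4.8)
The plan is to reduce the estimate to a single application of H\"older's inequality on the counting measure on $\ZZ^N$, after peeling off the two trivial endpoint cases. First I would dispose of $p = 0$ and $p = l$, where both sides of the asserted inequality visibly coincide, and then treat the remaining range $0 < p < l$; for such $p$ set $\theta := p/l \in (0,1)$.

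Next I would record the pointwise factorization, valid for every $k \in \ZZ^N$,
\[
\jp{|k|}^{2p}\,\abs{\hat u(k)}^2 = \left(\jp{|k|}^{2l}\,\abs{\hat u(k)}^2\right)^{\theta}\left(\abs{\hat u(k)}^2\right)^{1-\theta},
\]
which is nothing more than the identity $2p = (2l)\theta$ in the exponent of $\jp{|k|}$ together with $1 = \theta + (1-\theta)$ in the exponent of $\abs{\hat u(k)}^2$.

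Then I would sum over $k \in \ZZ^N$ and apply H\"older's inequality with the conjugate exponents $1/\theta$ and $1/(1-\theta)$ to the two nonnegative sequences $\big(\jp{|k|}^{2l}\abs{\hat u(k)}^2\big)_{k}$ and $\big(\abs{\hat u(k)}^2\big)_{k}$, which yields
\[
\|u\|_{H^p_{qp}}^2 = \sum_{k\in\ZZ^N}\jp{|k|}^{2p}\abs{\hat u(k)}^2 \le \left(\sum_{k\in\ZZ^N}\jp{|k|}^{2l}\abs{\hat u(k)}^2\right)^{\theta}\left(\sum_{k\in\ZZ^N}\abs{\hat u(k)}^2\right)^{1-\theta},
\]
and the right-hand side is exactly $\|u\|_{H^l_{qp}}^{2\theta}\,\|u\|_{H^0_{qp}}^{2(1-\theta)}$. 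The hypothesis $u \in H^l_{qp}$ (which is all the lemma's assumptions really give us here, the condition $l > N/2$ being carried along only for consistency with the rest of the section) guarantees this quantity is finite, so taking square roots and substituting $\theta = p/l$ gives the claimed bound.

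I do not anticipate any genuine obstacle: the argument is a textbook interpolation computation. The only points requiring a little care are that H\"older needs $\theta \in (0,1)$, which is precisely why I would split off $p \in \{0, l\}$ at the outset, and that the estimate as stated is meaningful for the range $0 \le p \le l$.
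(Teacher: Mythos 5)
Your argument is correct and is precisely the standard Fourier-side interpolation proof that the paper invokes when it says the result "is essentially the same as in the case of usual Sobolev spaces" and omits the details. The pointwise factorization plus H\"older with exponents $1/\theta$ and $1/(1-\theta)$ is exactly the intended computation, and your remarks about splitting off the endpoints and restricting to $0 \le p \le l$ are appropriate.
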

\begin{proof}
The proof is essentially the same as in the case of usual Sobolev spaces, so we omit it.
\end{proof}

Finally, since the quasiperiodic integrals are averaged integrals, we need to justify taking their time derivatives. 
The following two propositions provide this justification.

\begin{prop}\label{limitofQPintegrals}
Suppose $f_n \to f$ uniformly on $\RR$, and that the averaged integrals 
\[ 
\lim_{R\to \infty} \frac{1}{2R} \int^{R}_{-R} f \D x, \qquad \lim_{R\to \infty} \frac{1}{2R} \int^{R}_{-R} f_n \D x,
\]
exist, for $n=0,1,2,\ldots.$ Then, 
\[ 
	\lim_{R\to \infty} \frac{1}{2R} \int^{R}_{-R} f \D x = \lim_{n \to \infty} \lim_{R\to \infty} \frac{1}{2R} \int^{R}_{-R} f_n \D x.
\]
\end{prop}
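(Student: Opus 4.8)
The plan is to exploit uniform convergence to make the average integrals uniformly close, and then interchange the two limits via a standard $\varepsilon/3$ argument. Write $I(g) := \lim_{R\to\infty}\frac{1}{2R}\int_{-R}^{R} g\,\mathrm dx$ for the averaged integral of $g$ (which exists by hypothesis for $g = f$ and $g = f_n$), and write $A_R(g) := \frac{1}{2R}\int_{-R}^{R} g\,\mathrm dx$ for the finite average.

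First I would record the elementary estimate: for every $R>0$,
\[
|A_R(f) - A_R(f_n)| = \left| \frac{1}{2R}\int_{-R}^{R} (f - f_n)\,\mathrm dx \right| \leq \frac{1}{2R}\int_{-R}^{R} |f - f_n|\,\mathrm dx \leq \|f - f_n\|_{L^\infty(\RR)}.
\]
Since the right-hand side is independent of $R$, letting $R\to\infty$ (both averaged limits exist) gives $|I(f) - I(f_n)| \leq \|f - f_n\|_{L^\infty(\RR)}$ for every $n$. By uniform convergence $\|f - f_n\|_{L^\infty(\RR)} \to 0$, so $I(f_n) \to I(f)$, which is exactly the claim $\lim_{n\to\infty} I(f_n) = I(f)$.

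That is really the whole argument — once one has the bound $|I(f) - I(f_n)| \leq \|f - f_n\|_{L^\infty}$, the conclusion is immediate. If one prefers to organize it as an $\varepsilon/3$ argument without passing to the limit inside the $A_R$ estimate, one instead fixes $\varepsilon > 0$, chooses $n$ with $\|f - f_n\|_{L^\infty} < \varepsilon/3$, then chooses $R$ large so that $|A_R(f) - I(f)| < \varepsilon/3$ and $|A_R(f_n) - I(f_n)| < \varepsilon/3$, and combines by the triangle inequality to get $|I(f) - I(f_n)| < \varepsilon$; since $\varepsilon$ was arbitrary this again yields $I(f_n)\to I(f)$.

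The only point requiring any care — and hence the main (minor) obstacle — is that one must not try to bound $A_R(f-f_n)$ by an $L^1$ or $L^2$ norm, since on all of $\RR$ these need not be finite for quasiperiodic functions; the whole estimate has to go through the $L^\infty$ norm, which is exactly the hypothesis supplied. One should also note explicitly that the hypotheses guarantee all the limits in sight exist, so no existence issue arises when passing $R\to\infty$ in the displayed inequality; the uniform bound in $R$ is what makes that passage legitimate.
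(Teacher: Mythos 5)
Your proposal is correct and is essentially the same argument as the paper's: the paper sets $\epsilon_n = \sup_x |f_n(x)-f(x)|$, sandwiches $f_n - \epsilon_n \leq f \leq f_n + \epsilon_n$, averages over $[-R,R]$, and lets $R\to\infty$ to get $|I(f)-I(f_n)| \leq \epsilon_n \to 0$, which is exactly your $L^\infty$ bound uniform in $R$. No substantive difference.
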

\begin{proof}
We adapt the proof given in \cite[Theorem 7.16]{babyrudin}. Let 
\[ \epsilon_n = \sup_{x \in\RR} |f_n(x) - f(x)|. \]
Thus, $f_n - \epsilon_n \leq f \leq f_n + \epsilon_n$, and integrating from $-R$ to $R$ yields
\begin{align*}
	\frac{1}{2R} \int^R_{-R} f_n \D x - \epsilon_n &\leq \frac{1}{2R} \int^R_{-R} f \D x = \frac{1}{2R} \int^R_{-R} f_n \D x  + \epsilon_n.
\end{align*} 
Hence 
\[
- \epsilon_n \leq \frac{1}{2R} \int^R_{-R} f \D x - \frac{1}{2R} \int^R_{-R} f_n \D x \leq \epsilon_n.
\]
Taking the limit in $R$ yields
\[ 
	- \epsilon_n \leq \lim_{R\to\infty}\frac{1}{2R} \int^R_{-R} f \D x - \lim_{R\to\infty}\frac{1}{2R} \int^R_{-R} f_n \D x \leq \epsilon_n,
\]
so that 
\[ \abs{ \lim_{R\to\infty}\frac{1}{2R} \int^R_{-R} f \D x - \lim_{R\to\infty}\frac{1}{2R} \int^R_{-R} f_n \D x} \leq \epsilon_n.\]
Since $f_n \to f$ uniformly, we must have $\epsilon_n \to 0$ as $n \to \infty$. 
As the difference of averaged integrals is bounded by $\epsilon_n$, we see that taking $n\to \infty$ yields the desired result.
\end{proof}

With the preceding proposition in mind, we can prove the following result:
\begin{prop}\label{ExchangeDerivandInt}
Assume that the following averaged integrals exist for any $t\in(0,T)$
\[
\frac{\D}{\D t} \lim_{R \to \infty} \frac{1}{2R}\int^R_{-R} \phi(x,t) \D x,  \qquad
\lim_{R \to \infty} \frac{1}{2R}\int^R_{-R} \frac{\partial}{\partial t} \phi(x,t) \D x,
\]
and that $\dfrac{\partial}{\partial t} \phi(x,t) = \phi_t(x,t)$ is continuous in $t$. Then, 
\[\frac{\D}{\D t} \lim_{R \to \infty} \frac{1}{2R} \int^R_{-R} \phi(x,t) \D x=  \lim_{R \to \infty} \frac{1}{2R} \int^R_{-R} \frac{\partial}{\partial t} \phi(x,t) \D x, 
\]
i.e. we can interchange time derivatives and averaged integrals.
\end{prop}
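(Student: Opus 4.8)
The plan is to mimic the $\epsilon_n$-argument used for Proposition~\ref{limitofQPintegrals}, with the discrete index $n$ replaced by the continuous increment $h\to 0$ in a difference quotient. Write $F_R(t)=\frac{1}{2R}\int_{-R}^{R}\phi(x,t)\D x$ and $G_R(t)=\frac{1}{2R}\int_{-R}^{R}\phi_t(x,t)\D x$, and set $F(t)=\lim_{R\to\infty}F_R(t)$, $G(t)=\lim_{R\to\infty}G_R(t)$; by hypothesis both limits exist for $t\in(0,T)$ and $F$ is differentiable there. Fix $t\in(0,T)$ and take $h$ small enough that $t+h\in(0,T)$. Since the averaged integrals defining $F(t+h)$ and $F(t)$ both exist, arithmetic of limits gives
\[
\frac{F(t+h)-F(t)}{h}=\lim_{R\to\infty}\frac{1}{2R}\int_{-R}^{R}\frac{\phi(x,t+h)-\phi(x,t)}{h}\D x .
\]

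Next I would apply the mean value theorem in the $t$-variable pointwise in $x$: since $\partial_t\phi(x,\cdot)$ exists throughout $(0,T)$, for each $x$ there is $\theta_{x,h}\in(0,1)$ with $\frac{\phi(x,t+h)-\phi(x,t)}{h}=\phi_t(x,t+\theta_{x,h}h)$. Comparing with $G_R(t)$ and estimating under the integral sign,
\[
\left|\frac{F_R(t+h)-F_R(t)}{h}-G_R(t)\right|\leq \sup_{x\in\RR}\ \sup_{0\leq|s|\leq|h|}\bigl|\phi_t(x,t+s)-\phi_t(x,t)\bigr|=:\omega(t,h),
\]
a bound independent of $R$. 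Letting $R\to\infty$, the left-hand averaged integrals converge (to $\tfrac{F(t+h)-F(t)}{h}$ and $G(t)$ respectively) by the existence hypotheses, so the inequality is preserved:
\[
\left|\frac{F(t+h)-F(t)}{h}-G(t)\right|\leq \omega(t,h).
\]
Finally, letting $h\to 0$ and using that $\phi_t$ is continuous in $t$ to conclude $\omega(t,h)\to 0$, we obtain $F'(t)=G(t)$, which is the claim.

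The main obstacle is precisely the step $\omega(t,h)\to 0$: continuity of $\phi_t$ in $t$ for each fixed $x$ does not by itself force the supremum over $x\in\RR$ of the oscillation to vanish — one needs the $t$-continuity of $\phi_t$ to be uniform in $x$. In the situation where this proposition is invoked, $\phi$ is a fixed polynomial expression in a quasiperiodic solution and its spatial derivatives, so $\phi_t$ is a continuous quasiperiodic function of $x$, hence bounded and uniformly continuous on $\RR$, and jointly continuous in $(x,t)$; restricting $t$ to a compact subinterval of $(0,T)$ then supplies exactly the equicontinuity needed for $\omega(t,h)\to 0$. Alternatively, one can route the argument through the fundamental theorem of calculus: differentiation under the integral sign on the compact interval $[-R,R]$ gives $F_R'=G_R$, hence $F_R(t)=F_R(t_0)+\int_{t_0}^{t}G_R(s)\D s$; passing $R\to\infty$ (again using uniformity to move the limit inside the $s$-integral) yields $F(t)=F(t_0)+\int_{t_0}^{t}G(s)\D s$, and differentiating this identity, using continuity of $G$, recovers the result.
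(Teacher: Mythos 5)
Your proof is correct and follows essentially the same route as the paper's: both form the difference quotient, apply the mean value theorem in $t$ pointwise in $x$, deduce uniform-in-$x$ convergence of the difference quotients to $\phi_t(\cdot,t)$, and then pass the limit through the averaged integral (the paper does this last step by invoking Proposition \ref{limitofQPintegrals}). The caveat you flag --- that pointwise $t$-continuity of $\phi_t$ does not by itself give the $x$-uniform oscillation bound --- applies equally to the paper's own argument, which asserts the $\epsilon$--$\delta$ estimate ``for all $x\in\RR$'' directly from ``continuity of $\phi_t$''; your observation that the quasiperiodic structure of $\phi$ supplies the needed equicontinuity in the applications is the right way to close this shared gap.
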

\begin{proof}
We follow \cite[Theorem 9.42]{babyrudin}. 
Fix $t \in (0,T), x\in \RR$ and consider difference quotients
\[ \psi(x,s) = \frac{\phi(x,s) - \phi(x,t)}{s-t} \]
for $s$ sufficiently close to $t$. 
By the mean value theorem applied on $\phi$ in $s$, there exists an $r$ between $s$ and $t$ such that
\[
	\psi(x,s) = \frac{\partial}{\partial t} \phi(x,r).
\]
Therefore, by continuity of $\phi_t$, for any $\epsilon > 0$, there exists a $\delta > 0$ such that 
\[ \abs{ \psi(x,s) - \frac{\partial}{\partial t} \phi(x,s)} <\epsilon, \qquad (x \in \RR, 0 < |s-t| <\delta) \]
In other words, $\psi(x,s) \to \frac{\partial}{\partial t} \phi(x,t)$ uniformly, as $s \to t$.
Applying Proposition \ref{limitofQPintegrals} yields that
\[
	\lim_{R\to \infty} \frac{1}{2R} \int^{R}_{-R} \frac{\partial}{\partial t} \phi(x,t) \D x = \lim_{s \to t} \lim_{R\to \infty} \frac{1}{2R} \int^{R}_{-R} \psi(x,s)\D x.
\]
Now, let 
\[
f(y) = \lim_{R\to \infty} \frac{1}{2R} \int^{R}_{-R} \phi(x,y) \D x,
\] 
and note that 
\[
	\lim_{R\to \infty}\frac{1}{2R} \int^{R}_{-R} \psi(x,s) \D x = \frac{f(s) - f(t)}{s-t}
\]
by definition of $\psi(x,s).$
With this, we obtain that 
\[
	\lim_{R\to \infty} \frac{1}{2R} \int^{R}_{-R} \frac{\partial}{\partial t} \phi(x,t) \D x = \lim_{s \to t} \lim_{R\to \infty} \frac{1}{2R} \int^{R}_{-R} \psi(x,s)\D x = \lim_{s \to t} \frac{f(s) - f(t)}{s-t}.
\]
Since
\[\lim_{s \to t} \frac{f(s) - f(t)}{s-t} = \frac{\D}{\D t} f(t) = \frac{\D}{\D t} \lim_{R\to \infty} \frac{1}{2R} \int^{R}_{-R} \phi(x,t) \D x, \]
we clearly have 
\[
	\lim_{R\to \infty} \frac{1}{2R} \int^{R}_{-R} \frac{\partial}{\partial t} \phi(x,t) \D x =\frac{\D}{\D t} \lim_{R\to \infty} \frac{1}{2R} \int^{R}_{-R} \phi(x,t) \D x,
\]
as desired.
\end{proof}
To conclude, this proposition says that so long as the integrand is $C^1$ in time, we can safely exchange time derivatives.

Finally, we prove a proposition about Hilbert transform that we use repeatedly.
\begin{prop}\label{lineartermvanishes}
Let $s \in \RR.$ Then, for any $u \in H^s_{qp},$ 
\[ 
\sum_{k \in \ZZ^N} \FT{H \partial_x^2 D^s_x u}(k) \FT{D^s_x u}(-k) = 0.
\]
The statement is still true if $\partial^s_x u$ is replaced by $D^s_x u$.
\end{prop}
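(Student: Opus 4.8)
The plan is to expand everything on the Fourier side and exploit an antisymmetry in the summand. Write $w = D^s_x u$, so that $\FT{w}(k) = |k|^s \hat u(k)$. Then $\FT{H\partial_x^2 w}(k) = -i\,\sgn(\alpha\cdot k)\,(-(\alpha\cdot k)^2)\,\FT w(k) = i\,\sgn(\alpha\cdot k)(\alpha\cdot k)^2\,\FT w(k)$, using the given Fourier symbols for $H$ and for $\partial_x$ (which contributes a factor $i(\alpha\cdot k)$ per derivative, so $\partial_x^2$ contributes $-(\alpha\cdot k)^2$). Denote $m(k) := i\,\sgn(\alpha\cdot k)(\alpha\cdot k)^2$, so the sum in question is
\[
S := \sum_{k \in \ZZ^N} m(k)\,\FT w(k)\,\FT w(-k).
\]

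The key observation is that $m$ is an odd function of $k$: since $\sgn(\alpha\cdot(-k)) = -\sgn(\alpha\cdot k)$ and $(\alpha\cdot(-k))^2 = (\alpha\cdot k)^2$, we get $m(-k) = -m(k)$. Relabeling the summation index $k \mapsto -k$ in $S$ gives
\[
S = \sum_{k \in \ZZ^N} m(-k)\,\FT w(-k)\,\FT w(k) = -\sum_{k \in \ZZ^N} m(k)\,\FT w(k)\,\FT w(-k) = -S,
\]
hence $S = 0$. The same computation works verbatim with $D^s_x u$ replaced by $\partial^s_x u$ (or indeed any fixed Fourier multiplier applied to $u$), since all that was used is that the multiplier symbol depends on $k$ in a way that does not disturb the oddness of $m$ — more precisely, if $\FT{(\partial^s_x u)}(k) = \mu(k)\hat u(k)$ then the summand becomes $m(k)\mu(k)\mu(-k)\hat u(k)\hat u(-k)$, and $m(k)\mu(k)\mu(-k)$ is still odd in $k$ because $m$ is odd and $\mu(k)\mu(-k)$ is even.

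The only genuine issue to address is justifying the manipulations: that the series $S$ converges absolutely (so that reindexing is legitimate and the "$S=-S$" cancellation is not $\infty-\infty$). Here I would invoke the hypothesis $u\in H^s_{qp}$ together with the fact that $H\partial_x^2 D^s_x u$ lives in $H^{s-2}_{qp}$ while $D^s_x u$... — actually, to be safe one should note the statement as written presumes the left-hand side is a well-defined (finite) quantity, i.e. it is implicitly being applied in a context (the energy estimates of Section 3) where $u$ is smooth enough, say $u \in H^{s+2}_{qp}$ or better, so that $\sum_k |m(k)|\,|\FT w(k)|\,|\FT w(-k)| = \sum_k |\alpha\cdot k|^2 |k|^{2s}|\hat u(k)||\hat u(-k)| < \infty$ by Cauchy–Schwarz. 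With absolute convergence in hand, the reindexing $k\mapsto -k$ is valid and the proof is complete. I expect this convergence bookkeeping to be the main (mild) obstacle; the algebraic heart of the argument is the one-line oddness cancellation.
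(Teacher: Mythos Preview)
Your proof is correct and follows essentially the same approach as the paper: both exploit the oddness of the symbol $-i\,\sgn(\alpha\cdot k)(i\alpha\cdot k)^2$ under $k\mapsto -k$, with the paper splitting the sum by $\sgn(\alpha\cdot k)$ and reindexing one half, while you reindex the whole sum to get $S=-S$. Your added remark on absolute convergence is a reasonable (and arguably welcome) piece of bookkeeping that the paper leaves implicit.
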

\begin{proof}
First, use the definition of Hilbert transform:
\begin{align*}
	\sum_{k \in \ZZ^N} &\FT{H \partial_x^2 D^s_x  u}(k) \FT{D^s_x  u}(-k) \\
	&=  \sum_{k \in \ZZ^N}(-i) \sgn(\alpha \cdot k) (i \alpha \cdot k)^2 \FT{D^s_x u}(k) \FT{D^s_x  u}(-k).
\end{align*}
Then, split the sum into two sums, depending on the sign of $\alpha \cdot k:$
\begin{align*}
	&\sum_{k: \alpha \cdot k> 0}(-i) \sgn(\alpha \cdot k) (i \alpha \cdot k)^2 \FT{D^s_x u}(k) \FT{D^s_x u}(-k) \\
	&\qquad \qquad + \sum_{k: \alpha \cdot k< 0}(-i) \sgn(\alpha \cdot k) (i \alpha \cdot k)^2 \FT{D^s_x  u}(k) \FT{D^s_x u}(-k).
\end{align*}
Evaluating the sign leads to
\begin{align*}
	\sum_{k: \alpha \cdot k> 0}(-i)(i \alpha \cdot k)^2 \FT{D^s_x u}(k) \FT{D^s_x u}(-k) - \sum_{k: \alpha \cdot k< 0}(-i) (i \alpha \cdot k)^2 \FT{D^s_x u}(k) \FT{D^s_x u}(-k).
\end{align*}
Performing a change of variables $k \mapsto -k$ in the second sum shows that the two sums are the same, thus the difference is $0$, as desired.
\end{proof}

\section{Regularized solutions and their properties}
In this section, we prove several properties of the regularized Benjamin-Ono equation \eqref{LWPeq1} with quasiperiodic initial data.
In particular, we show existence of approximate solutions, that these solutions are bounded uniformly in $n,$ and a Cauchy estimate for these solutions.

\subsection{Properties of regularized data}
In applying the Bona-Smith method \cite{BonaSmith}, regularization of initial data plays a crucial role.
As we seek to emulate their argument especially as to demonstrating full continuity of solutions with respect to the initial data, 
we also regularize the quasiperiodic data: for a given $u \in H^{s}_{qp},$ define $u_{\delta}$ through the equation
\begin{equation}\label{regData}
	\FT{(u_\delta)}(k) = \mathbb{I}_{|k|\leq \delta}(k) \FT{u}(k),
\end{equation}
where $|k|$ stands for the Euclidean norm of $k \in \ZZ^N$.

The following proposition outlines the advantages of regularizing data as we do. It is analagous to \cite[Lemma 5]{AMW}, and is essential for demonstrating
the full result of continuity with respect to the initial data.
\begin{prop}
Let $\mathcal{K} \subset H^{s}_{qp}$ be a compact set. Then, for any $\delta >1$ and any $u \in \mathcal{K}, (u)_\delta$ satisfies:
\begin{align}
	\label{RD1} \| (u)_\delta \|_{H^{s+j}_{qp}} &\lesssim_j \delta^j \| (u)_\delta \|_{H^{s}_{qp}} \quad \forall j \geq 0, \\
	\label{RD2} \| (u)_\delta - u \|_{L^2_{qp}} &= o(\delta^{-s}), \\
	\label{RD3} \| (u)_\delta - u \|_{H^s_{qp}} &= o(1).
\end{align}
In particular, the rate of convergence in \eqref{RD2} and \eqref{RD3} depends only on $\mathcal{K}$ and not on $u$.
Here, recall that the notation $f(x) = o(g(x))$ means that $f(x)/g(x) \to 0$ as $x \to \infty.$
\end{prop}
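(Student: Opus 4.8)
The plan is to handle the three estimates separately, obtaining \eqref{RD1} by a direct Bernstein-type computation, \eqref{RD2} and \eqref{RD3} by recognizing the relevant norms as tails of convergent series, and then upgrading the pointwise convergence to uniform convergence over $\mathcal{K}$ by a Dini-type compactness argument, which is where the real content lies.

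For \eqref{RD1}, I would argue straight from the definition of the norm. Since the Fourier coefficients of $(u)_\delta$ are supported on $\{|k|\le\delta\}$, and $\delta>1$ gives $\jp{k}^2 = 1+|k|^2 \le 1+\delta^2 < 2\delta^2$ on that set, one has
\[
\|(u)_\delta\|_{H^{s+j}_{qp}}^2 = \sum_{|k|\le\delta}\jp{k}^{2j}\,\jp{k}^{2s}\,|\hat u(k)|^2 \le (2\delta^2)^j\sum_{|k|\le\delta}\jp{k}^{2s}\,|\hat u(k)|^2 = 2^j\delta^{2j}\,\|(u)_\delta\|_{H^s_{qp}}^2,
\]
which is \eqref{RD1} with constant $2^{j/2}$; this is uniform in $u$ from the start. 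For the other two, observe that the Fourier coefficients of $(u)_\delta - u$ are $\mathbb{I}_{|k|>\delta}(k)\,\hat u(k)$, so that
\[
\|(u)_\delta - u\|_{H^s_{qp}}^2 = \sum_{|k|>\delta}\jp{k}^{2s}\,|\hat u(k)|^2, \qquad \|(u)_\delta - u\|_{L^2_{qp}}^2 = \sum_{|k|>\delta}|\hat u(k)|^2.
\]
The first expression is a tail of the convergent series defining $\|u\|_{H^s_{qp}}^2$, hence tends to $0$ as $\delta\to\infty$, giving \eqref{RD3} for each fixed $u$; and since $|k|>\delta$ forces $\jp{k}^{2s} > \delta^{2s}$ (as $s>0$), we get $\delta^{2s}\|(u)_\delta - u\|_{L^2_{qp}}^2 \le \sum_{|k|>\delta}\jp{k}^{2s}|\hat u(k)|^2 \to 0$, which is \eqref{RD2} for each fixed $u$.

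To obtain the uniformity claimed in the last sentence, set $g_\delta(u) := \sum_{|k|>\delta}\jp{k}^{2s}|\hat u(k)|^2 = \|(u)_\delta - u\|_{H^s_{qp}}^2$. The map $u\mapsto (u)_\delta - u$ is the complementary spectral projection, a bounded linear operator on $H^s_{qp}$ of norm at most $1$, so $g_\delta$ is continuous (indeed locally Lipschitz) on $H^s_{qp}$; moreover $g_\delta(u)$ is non-increasing in $\delta$ because the index set $\{|k|>\delta\}$ shrinks; and $g_\delta(u)\to 0$ for each $u$. Restricting $\delta$ to integers $n$, Dini's theorem on the compact set $\mathcal{K}$ yields $\sup_{u\in\mathcal{K}} g_n(u)\to 0$, and monotonicity in $\delta$ then gives $\sup_{u\in\mathcal{K}} g_\delta(u)\to 0$ along the continuous parameter as well. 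Since $\|(u)_\delta - u\|_{H^s_{qp}}^2 = g_\delta(u)$ and $\delta^{2s}\|(u)_\delta - u\|_{L^2_{qp}}^2 \le g_\delta(u)$, both rates are dominated by $\sup_{u\in\mathcal{K}} g_\delta(u)$, which depends only on $\mathcal{K}$, completing the proof.

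The only genuine obstacle is the uniformity statement: the estimates \eqref{RD1}–\eqref{RD3} themselves are elementary, but establishing a rate independent of $u$ requires packaging the three observations that $g_\delta$ is continuous on $H^s_{qp}$ (from boundedness of the truncation projection), monotone in $\delta$, and pointwise null, so that Dini's theorem applies on $\mathcal{K}$. Once those hypotheses are verified the conclusion is immediate.
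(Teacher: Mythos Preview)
Your proof is correct. The treatment of \eqref{RD1}--\eqref{RD3} for a fixed $u$ is essentially identical to the paper's: the same Bernstein-type bound for \eqref{RD1}, and the same recognition of \eqref{RD2}--\eqref{RD3} as tails of the convergent series $\sum_k \jp{k}^{2s}|\hat u(k)|^2$.

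Where you diverge from the paper is in the uniformity argument. The paper proceeds in two stages: first it treats the special case $\mathcal{K}=\{u_n\}_{n\ge 1}\cup\{u_0\}$ with $u_n\to u_0$ by an $\epsilon/3$ argument (using $\|(u_n)_\delta-(u_0)_\delta\|_{H^s_{qp}}\le\|u_n-u_0\|_{H^s_{qp}}$) to show that the minimal threshold $M_n$ for which $\delta>M_n$ gives $\|(u_n)_\delta-u_n\|_{H^s_{qp}}<\epsilon$ eventually coincides with $M_0$, hence $\sup_n M_n<\infty$; then it handles general compact $\mathcal{K}$ by contradiction, extracting a sequence with $M_{u_n}\to\infty$, passing to a convergent subsequence by compactness, and invoking the special case. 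Your route via Dini's theorem is cleaner and more conceptual: you identify the tail functional $g_\delta(u)=\|(u)_\delta-u\|_{H^s_{qp}}^2$, verify continuity (from boundedness of the truncation projection), monotonicity in $\delta$, and pointwise convergence to $0$, and then Dini on $\mathcal{K}$ delivers uniformity in one stroke. The paper's argument is in effect a hands-on reproof of Dini for this particular family, whereas yours isolates the abstract principle; both rely on the same two ingredients (continuity of the projection and compactness of $\mathcal{K}$), but yours packages them more efficiently.
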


\begin{proof}
First, we prove \eqref{RD1}. Note that 
\begin{align*}
	\| (u)_\delta \|_{H^{s+j}_{qp}}^2 &= \sum_{k \in \ZZ^N} (1+ |k|^2)^s \abs{\hat{u}}^2(k) (1+ |k|^2)^j \mathbb{I}_{|k|\leq \delta}(k) \\
	&\leq (1+ \delta^2)^j \sum_{k \in \ZZ^N} (1+ |k|^2)^s \abs{\hat{u}}^2(k) \mathbb{I}_{|k|\leq \delta}(k) \\
	&\leq (2\delta^2)^j\sum_{k \in \ZZ^N} (1+ |k|^2)^s \abs{\hat{u}}^2(k),
\end{align*}
so that 
\[
	\| (u)_\delta \|_{H^{s+j}_{qp}} \lesssim \delta^{j} \| (u)_\delta \|_{H^{s}_{qp}}.
\]
As for \eqref{RD2}, we have 
\begin{align*}
	\| (u)_\delta - u \|_{L^2_{qp}}^2 &= \sum_{k \in \ZZ^N} \left(\mathbb{I}_{|k|\leq \delta}(k) - 1\right)^2 \abs{\hat{u}}^2(k) \\
	&= \sum_{k \in \ZZ^N: |k| > \delta} \abs{\hat{u}}^2(k) \\
	&\leq \frac{1}{\delta^{2s}}\sum_{k \in \ZZ^N: |k| > \delta} |k|^{2s} \abs{\hat{u}}^2(k).
\end{align*}
Clearly $\sum_{k \in \ZZ^N: |k| > \delta} |k|^{2s} \abs{\hat{u}}^2(k) \to 0$ as $\delta \to \infty$. 
Thus, we obtain \eqref{RD2}.

Finally, by the same means one obtains 
\begin{align*}
	\| (u)_\delta - u \|_{H^s_{qp}}^2 &= \sum_{k \in \ZZ^N: |k| > \delta} \jp{|k|}^{2s} \abs{\hat{u}}^2(k),
\end{align*}
which is the same as \eqref{RD3}. 

We tackle the last statement about the rate of convergence. 
For simplicity, we first consider the case of $\mathcal{K} := \{ u_n \}_{n=1}^{\infty} \cup u_0$, where $u_n \to u_0$ in $H^s_{qp}.$
Let $\epsilon > 0.$ 
Now, for any fixed $u_n$, there exists $M(n)$ such that for any $\delta > M(n), \| (u_n)_\delta - u_n\|_{H^s_{qp}} < \epsilon/3$.
Of all such $M(n),$ write $M_n$ for the minimal such value.

Furthermore, there exists $N$ such that for all $n > N$, $\| u- u_n \|_{H^s_{qp}} < \epsilon/3.$
Thus, for a fixed $n>N,$
\begin{align*}
	\| u_\delta - u \|_{H^s_{qp}}	
	&\leq \| u_\delta- (u_n)_\delta \|_{H^s_{qp}} + \| (u_n)_\delta - u_n\|_{H^s_{qp}} + \| u_n - u_0\|_{H^s_{qp}} \\
	&< \epsilon,
\end{align*}
so that $\| u_\delta - u \|_{H^s_{qp}} < \epsilon$. 
In particular, since $M_0$ is the minimal value such that $\delta > M_0$ implies $\| u_\delta - u \|_{H^s_{qp}} < \epsilon$, we must have that $M_n \geq M_0$.

Similarly, for the same $n > N$ that we fixed above we have
\begin{align*}
	\| (u_n)_\delta - u_n \|_{H^s_{qp}}	
	&\leq \| (u_n)_\delta  - (u_0)_\delta \|_{H^s_{qp}} + \| (u_0)_\delta - u_0 \|_{H^s_{qp}} + \| u_0 - u_n \|_{H^s_{qp}}\\
	&< \epsilon,
\end{align*}
so that $\| (u_n)_\delta - u_n \|_{H^s_{qp}} < \epsilon.$
Since $M_n$ is the minimal value for which $\delta > M_n$ implies $\| (u_n)_\delta - u_n \|_{H^s_{qp}} < \epsilon,$ we must have $M_0 \geq M_n$.
We thus obtain that $M_n = M_0$ for $n > N$, and therefore the value 
\[
	M = \sup_{i = 0,1,2,\ldots} M_i = \max_{i = 0,1, \ldots n} M_i
\]
must be finite.
Therefore, for any $\epsilon >0$, we can pick $M$ such that for any $u \in \mathcal{K}$ and $\delta > M,$ we have
\[ \| u_\delta - u\|_{H^s_{qp}} < \epsilon.
\] 

Now, we consider any compact set $\mathcal{K}.$ 
Let $\epsilon > 0,$ and for a given $u$ define $M_u$ to be the minimal value such that $\delta > M_u$ implies $\| u_\delta - u \|_{H^s_{qp}} < \epsilon$.
Define
\[ M = \sup_{u \in \mathcal{K}} M_u. \]
To show uniformity, we need that $M < \infty.$

Suppose not, i.e. $M = \infty$. 
Then, there exists a sequence $\{ u_n \} \subset \mathcal{K}$ such that $M_n := M_{u_n} > n$ for each $n$. 
By compactness of $\mathcal{K}$, there is a convergent subsequence $\{ u_{n_j} \} \subset \{u_{n}\}$. 
Furthermore, we know that there is a limit of this sequence $u_{n_0} \in \mathcal{K}.$ 
Let 
\[ 
D = \sup_{n_j: j = 0, 1, \ldots} M_{n_j}.
\] 
Since $M_{n_j} > n_j$, we must have that $D = \infty$.

However, the proof of the simple case indicates that $D < \infty$, hence a contradiction. 
Therefore, $M$ is bounded, and we obtain the desired uniformity on $\mathcal{K}.$
\end{proof}
\subsection{Existence of approximate solutions}
We mirror the approach of \cite[Section 8.5]{Ambrose_2016} and use the following Picard theorem to obtain solutions to the regularized equation \eqref{LWPeq1}.
\begin{thm}[Picard Theorem]\label{PicardTheorem}
Let $B$ be a Banach space and let $O\subseteq B$ be an open set. Let $F:O \to B$ be such that $F$ is locally Lipschitz: for any $X \in O$, there exists $\lambda > 0$ and an open set $U \subseteq O$ such that for all $Y, Z \in U,$
\[
	\| F(Y) - F(Z) \|_B \leq \lambda \| Y - Z \|_B. 
\]
Then, for all $X_0$, there exists $T>0$ and a unique $X \in C^1([-T, T]; O)$ such that $X$ solves the initial value problem 
\[
\frac{\D X}{\D t} = F(X), X(0) = X_0.	
\]
\end{thm}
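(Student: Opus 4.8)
The plan is to recast the initial value problem as an integral fixed-point equation, solve it by the Banach contraction principle, bootstrap the regularity of the fixed point from continuous to $C^1$, and finally promote the local uniqueness coming from the contraction to genuine uniqueness by a Grönwall estimate together with a connectedness argument. First I would fix $X_0 \in O$ and apply the local Lipschitz hypothesis to obtain an open set $U \ni X_0$ and a constant $\lambda > 0$ with $\norm{F(Y) - F(Z)}_B \leq \lambda \norm{Y - Z}_B$ for all $Y, Z \in U$. Shrinking if necessary, I would choose $r > 0$ so that the closed ball $\overline{B}(X_0, r) \subseteq U$. A Lipschitz map is continuous, hence bounded on $\overline{B}(X_0, r)$; writing $M := \norm{F(X_0)}_B + \lambda r$, the triangle inequality together with the Lipschitz bound gives $\norm{F(Y)}_B \leq M$ for every $Y \in \overline{B}(X_0, r)$.

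Next I would pass to the equivalent integral equation $X(t) = X_0 + \int_0^t F(X(\tau)) \D \tau$ and define the Picard map $\Phi$ on $\mathcal{Y} := C([-T,T]; \overline{B}(X_0, r))$ by $(\Phi X)(t) = X_0 + \int_0^t F(X(\tau)) \D \tau$. Endowed with the supremum norm $\norm{\argdot}_{\mathcal{Y}}$, the set $\mathcal{Y}$ is a closed subset of the Banach space $C([-T,T]; B)$, hence a complete metric space. Choosing $T \leq \min\{ r/M,\ \theta/\lambda\}$ for some fixed $\theta \in (0,1)$ makes $\Phi$ a self-map of $\mathcal{Y}$, since $\norm{(\Phi X)(t) - X_0}_B \leq \abs{t} M \leq TM \leq r$, and a contraction, since the Lipschitz bound inside the integral yields $\norm{\Phi X - \Phi Z}_{\mathcal{Y}} \leq T\lambda \norm{X - Z}_{\mathcal{Y}} \leq \theta \norm{X - Z}_{\mathcal{Y}}$. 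The Banach fixed-point theorem then produces a unique $X \in \mathcal{Y}$ with $\Phi X = X$. To upgrade regularity, I would observe that $X$ is continuous, so $\tau \mapsto F(X(\tau))$ is continuous, and the fundamental theorem of calculus applied to $X(t) = X_0 + \int_0^t F(X(\tau)) \D \tau$ shows that $X \in C^1([-T,T]; O)$ with $X'(t) = F(X(t))$ and $X(0) = X_0$.

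The main obstacle is uniqueness, because the contraction argument only guarantees uniqueness among solutions that remain inside the ball $\overline{B}(X_0, r)$, whereas the statement demands uniqueness among \emph{all} $C^1$ solutions valued in $O$. To bridge this gap I would take any $Y \in C^1([-T,T]; O)$ with $Y(0) = X_0$; by continuity $Y(t) \in \overline{B}(X_0, r)$ for $\abs{t} \leq \delta$ for some $\delta > 0$, and on this interval both $X$ and $Y$ satisfy the integral equation, so the Lipschitz bound gives $\norm{X(t) - Y(t)}_B \leq \lambda \abs{\int_0^t \norm{X(\tau) - Y(\tau)}_B \D \tau}$. Grönwall's inequality (applied separately for $t \geq 0$ and $t \leq 0$) then forces $X = Y$ on $[-\delta, \delta]$. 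Finally I would run a standard open–closed argument: the set $I := \{ t \in [-T,T] : X(t) = Y(t) \}$ is nonempty (it contains $0$), closed by continuity, and open because at any $t_\ast \in I$ the common value $X(t_\ast) = Y(t_\ast)$ lies in $O$, so the local Lipschitz property at that point yields, via the same Grönwall estimate centered at $t_\ast$, that $X = Y$ in a neighbourhood of $t_\ast$. Connectedness of $[-T,T]$ gives $I = [-T,T]$, establishing uniqueness and completing the proof.
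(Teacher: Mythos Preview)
The paper does not supply a proof of this theorem; it is stated as a standard result, with a reference to \cite[Section~8.5]{Ambrose_2016}, and then immediately applied to obtain solutions of the regularized equation. Your proof is correct and is precisely the classical Picard--Lindel\"of argument one would expect: recast the ODE as an integral equation, apply the contraction mapping principle on a closed ball in $C([-T,T];B)$, bootstrap the fixed point to $C^1$ via the fundamental theorem of calculus, and then upgrade the local uniqueness from the contraction to full uniqueness using Gr\"onwall and an open--closed argument on $[-T,T]$. There is nothing to compare against in the paper itself, and your argument stands on its own.
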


\begin{thm}\label{LocalExistenceBad}
Let $s > N/2 + 1$. Let $u_0 \in H^{s}_{qp}.$
For any integer $n \geq 1$, there exists a $T_n>0$ and $u_n \in C([0,T_n]; H^{s}_{qp})$ such that $u_n$ satisfies the regularized Benjamin-Ono equation, as well as $u_n(0) = u_0$.
\end{thm}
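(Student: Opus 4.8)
The plan is to apply the Picard theorem (Theorem \ref{PicardTheorem}) in the Banach space $B = H^s_{qp}$ with $O = B$, to the vector field
\[
F(u) = \chi_n\big[(\chi_n u)(\chi_n u_x)\big] + \chi_n[Hu_{xx}].
\]
First I would check that $F$ maps $H^s_{qp}$ into $H^s_{qp}$. The key observation is that $\chi_n$ is, for each fixed $n$, a bounded smoothing operator on every $H^s_{qp}$: since $\chi_n$ truncates to frequencies with $|\alpha\cdot k| < n$, and since $|\alpha\cdot k| \lesssim_\alpha |k|$, the operator $\chi_n$ is bounded from $H^s_{qp}$ to $H^{s+m}_{qp}$ for every $m \geq 0$, with operator norm depending on $n$ and $m$. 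Consequently $\chi_n u \in H^{s+2}_{qp}$, so $\chi_n u_x \in H^{s+1}_{qp}$; the algebra property (valid since $s > N/2$) gives $(\chi_n u)(\chi_n u_x) \in H^{s+1}_{qp} \subset H^s_{qp}$, and then a further application of $\chi_n$ keeps us in $H^s_{qp}$. Similarly $Hu_{xx} \in H^{s-2}_{qp}$ in general, but $\chi_n[Hu_{xx}] \in H^s_{qp}$ because $\chi_n$ recovers the lost derivatives (on the truncated frequency set, $|k|^2 \lesssim_n 1$, and $H$ is an isometry on Fourier coefficients). So $F: H^s_{qp} \to H^s_{qp}$ is well-defined.

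Next I would verify that $F$ is locally Lipschitz on $H^s_{qp}$. The linear term $u \mapsto \chi_n[Hu_{xx}]$ is a bounded linear operator on $H^s_{qp}$ (by the same smoothing argument), hence globally Lipschitz. For the quadratic term, writing $Q(u) = \chi_n[(\chi_n u)(\chi_n u_x)]$, one uses bilinearity:
\[
Q(u) - Q(v) = \chi_n\big[(\chi_n(u-v))(\chi_n u_x)\big] + \chi_n\big[(\chi_n v)(\chi_n(u-v)_x)\big],
\]
and then estimates each term in $H^s_{qp}$ using boundedness of $\chi_n$ (including the derivative-gaining bound $\|\chi_n w\|_{H^{s+1}_{qp}} \lesssim_n \|w\|_{H^s_{qp}}$) together with the algebra property, obtaining
\[
\|Q(u) - Q(v)\|_{H^s_{qp}} \lesssim_{n,s} \big(\|u\|_{H^s_{qp}} + \|v\|_{H^s_{qp}}\big)\|u-v\|_{H^s_{qp}},
\]
which is Lipschitz on any ball. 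Combining the two terms gives the local Lipschitz hypothesis of Theorem \ref{PicardTheorem}.

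Picard's theorem then yields $T_n > 0$ and a unique $u_n \in C^1([-T_n, T_n]; H^s_{qp})$ solving $\frac{d u_n}{dt} = F(u_n)$ with $u_n(0) = u_0$; restricting to $[0,T_n]$ gives the asserted solution, and since $u_n \in C^1$ in time with values in $H^s_{qp}$ it is in particular in $C([0,T_n]; H^s_{qp})$. The only genuinely non-routine point is tracking that the Fourier truncation $\chi_n$ really does absorb the two spatial derivatives in $Hu_{xx}$ and the one spatial derivative in $\chi_n u_x$, so that the right-hand side and the difference quotients stay in $H^s_{qp}$ with $n$-dependent (but finite) constants; everything else is a direct application of the algebra property and the abstract ODE theorem. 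I expect this bookkeeping with the $n$-dependent smoothing bounds to be the main thing to get right, though it is not difficult — and I would remark explicitly that the constants blow up as $n \to \infty$, which is exactly why the next step of the paper needs uniform-in-$n$ energy estimates rather than this existence result alone.
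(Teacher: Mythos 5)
Your overall strategy is exactly the paper's: apply the Picard theorem to $F(u) = \chi_n[(\chi_n u)(\chi_n u_x)] + \chi_n[Hu_{xx}]$ on $B = O = H^s_{qp}$, after checking that $F$ maps $H^s_{qp}$ into itself and is locally Lipschitz. However, the justification you give for why $\chi_n$ absorbs the derivatives contains a genuine error. You assert that $\chi_n$ is bounded from $H^s_{qp}$ to $H^{s+m}_{qp}$ for every $m\geq 0$ because ``on the truncated frequency set, $|k|^2 \lesssim_n 1$.'' This is false: $\chi_n$ truncates to $\{k \in \ZZ^N : |\alpha\cdot k| < n\}$, and since $\alpha\cdot k$ accumulates near zero for large $|k|$ (small denominators), this set contains lattice points of arbitrarily large Euclidean norm. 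The $H^{s+m}_{qp}$ norm is weighted by $\jp{|k|}^{2(s+m)}$ with $|k|$ the Euclidean norm, so $\chi_n$ gains no regularity in that scale, and the claims $\chi_n u \in H^{s+2}_{qp}$, $(\chi_n u)(\chi_n u_x)\in H^{s+1}_{qp}$ do not follow. The paper flags precisely this point in the remark following the theorem.

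The repair is that the spatial derivative $\partial_x$ is the Fourier multiplier $i\,\alpha\cdot k$, not a multiplier comparable to $|k|$; hence on the support of $\chi_n$ one has $|\alpha\cdot k| < n$, so $\partial_x^j\chi_n$ is bounded on $H^s_{qp}$ with operator norm at most $n^j$. This gives $\|\chi_n u_x\|_{H^s_{qp}} \leq n\|u\|_{H^s_{qp}}$ and $\|\chi_n Hu_{xx}\|_{H^s_{qp}} \leq n^2\|u\|_{H^s_{qp}}$ directly, and then the algebra property (valid since $s > N/2$) closes the estimate for the quadratic term in $H^s_{qp}$ itself; no intermediate space $H^{s+m}_{qp}$ is needed. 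With this substitution your bilinear decomposition of $Q(u)-Q(v)$ and the resulting local Lipschitz bound go through verbatim, and the application of the Picard theorem is correct.
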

\begin{proof}[Sketch]
Take $O = B= H^{s}_{qp}$ and define $F: H^{s}_{qp} \to H^{s}_{qp}$ by $F(u) = \chi_n[(\chi_n u) (\chi_n u_{x})] + \chi_n[H u_{xx}].$
The presence of mollifiers $\chi_n$ ensures that $F(u) \in H^s_{qp}$ for any $u \in H^s_{qp}$ and that $F$ is locally Lipschitz. 
By the Picard theorem, for each $n$, there is a $T_n$ such that there is a unique $u_n$ that solves the regularized equation and $u_n \in C^{1}([0,T_n]; H^{s}_{qp})$.
\end{proof}

\begin{rmk} In a typical periodic energy method proof, the approximate solutions may be finite dimensional, such as by having performed a Galerkin
projection.  In the present setting, our sum is still an infinite sum, as $\chi_{n}$ cuts off Fourier modes with $\alpha\cdot k$ large, rather than $k$ large.
It is well known that $\alpha\cdot k$ can accumulate near zero for large $k.$  However, even with this being an infinite sum, the point is that $\alpha\cdot k$
is now bounded, and thus norms of derivatives of a function $\chi_{n}f$ are bounded by norms of $f.$
\end{rmk}

\subsection{Uniform bound}
In this section, we seek to prove a bound on solutions $u_n$ derived in the previous section that is uniform in $n$. 
Having this bound would give us an existence of a common time $T$ on which the approximate solutions exist.
Crucially, $T$ will not depend on $n.$

Clearly, we know that 
\[ 
	\| u_n \|_{H^s_{qp}} \lesssim \| u_n \|_{L^2_{qp}} + \| D^s_x u_n \|_{L^2_{qp}},
\]
so it is enough to prove that $\| u_n \|_{L^2_{qp}}$ and $\| D_x^s u_n \|_{L^2_{qp}}$ are bounded uniformly.
Therefore, we separate our proof into two parts, to tackle each norm.

\begin{prop}[The $L^2_{qp}$ bound]\label{UB-prop-L2}
Let $u_n$ be the solution of the regularized Benjamin-Ono equation with initial data $u_{0}.$  Then we have the following conserved quantity:
\[
	\| u_n \|_{L^2_{qp}} = \| u_0 \|_{L^2_{qp}}.
\]
\end{prop}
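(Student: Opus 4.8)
The plan is to differentiate $\|u_n\|_{L^2_{qp}}^2$ in time and show the derivative vanishes, using the structure of the regularized equation \eqref{LWPeq1}. By Proposition~\ref{ExchangeDerivandInt}, since $u_n \in C^1([0,T_n];H^s_{qp})$ with $s > N/2+1$ (so that $u_n$ and its relevant derivatives are $C^1$ in time and uniformly continuous in $x$), we may write
\[
\frac{\D}{\D t}\|u_n\|_{L^2_{qp}}^2 = \frac{\D}{\D t}\sum_{k\in\ZZ^N}|\widehat{u_n}|^2(k) = 2\,\Re\sum_{k\in\ZZ^N}\FT{(u_n)_t}(k)\,\overline{\widehat{u_n}(k)},
\]
or, working with the pairing $\sum_k \FT{(u_n)_t}(k)\,\widehat{u_n}(-k)$ as in Proposition~\ref{lineartermvanishes} (using reality of $u_n$, i.e.\ $\widehat{u_n}(-k)=\overline{\widehat{u_n}(k)}$). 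Substituting $(u_n)_t = \chi_n[(\chi_n u_n)(\chi_n (u_n)_x)] + \chi_n[H(u_n)_{xx}]$ splits the derivative into a \emph{dispersive term} and a \emph{nonlinear term}, each of which I will show vanishes.

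For the dispersive term, $\sum_k \FT{\chi_n H (u_n)_{xx}}(k)\,\widehat{u_n}(-k)$: since $\chi_n$ is a Fourier multiplier by the real symbol $\mathbb{I}_{|\alpha\cdot k|<n}$, it is self-adjoint and idempotent, so $\chi_n$ can be moved onto $\widehat{u_n}(-k)$ or simply absorbed; what remains is exactly $\sum_k \FT{H(u_n)_{xx}}(k)\,\FT{\chi_n u_n}(-k)$, and the symbol $-i\,\sgn(\alpha\cdot k)(i\alpha\cdot k)^2$ is odd in $k$ while $\FT{\chi_n u_n}(k)\FT{\chi_n u_n}(-k)$ is even, so the sum is $0$ — this is precisely the content of Proposition~\ref{lineartermvanishes} (with $s=0$, applied to $\chi_n u_n$). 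For the nonlinear term, the key is that $\chi_n$ is self-adjoint and idempotent, so
\[
\sum_k \FT{\chi_n[(\chi_n u_n)(\chi_n (u_n)_x)]}(k)\,\widehat{u_n}(-k) = \sum_k \FT{(\chi_n u_n)(\chi_n(u_n)_x)}(k)\,\FT{\chi_n u_n}(-k),
\]
which is the Fourier-side expression for the averaged integral of $v\,v_x\,v = v^2 v_x = \tfrac13(v^3)_x$ with $v=\chi_n u_n$; since the averaged integral of a derivative of a bounded quasiperiodic function is $0$ (equivalently, the $k=0$ Fourier coefficient of $(v^3)_x$ vanishes because it carries a factor $\alpha\cdot 0 = 0$), this term is $0$ as well. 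Hence $\frac{\D}{\D t}\|u_n\|_{L^2_{qp}}^2 = 0$, and evaluating at $t=0$ using $u_n(0)=u_0$ gives the claim.

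The main obstacle is the bookkeeping needed to rigorously justify interchanging $\D/\D t$ with the infinite Fourier sum (or the averaged integral), i.e.\ verifying the hypotheses of Proposition~\ref{ExchangeDerivandInt}: one must check that $(u_n)_t$, viewed pointwise in $x$, is continuous in $t$ and that the relevant averaged integrals exist — this follows because $u_n\in C^1([0,T_n];H^s_{qp})$ with $s>N/2$, so by the Sobolev inequality (Proposition~\ref{SobolevInequality}) $u_n(\cdot,t)$ and $(u_n)_t(\cdot,t)$ are bounded continuous functions depending continuously on $t$, and their Fourier-series representations let us identify the averaged integrals with $k=0$ Fourier coefficients. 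A minor secondary point is handling the conjugate/reality conventions cleanly so that the odd-symmetry cancellations in the dispersive term go through exactly as in Proposition~\ref{lineartermvanishes}; once those are in place the computation is routine.
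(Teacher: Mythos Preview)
Your proposal is correct and follows essentially the same approach as the paper's proof: justify the interchange of $\D/\D t$ and the Fourier sum via Proposition~\ref{ExchangeDerivandInt}, split into linear and nonlinear contributions, use self-adjointness and idempotence of $\chi_n$ to transfer it onto $\widehat{u_n}(-k)$, invoke Proposition~\ref{lineartermvanishes} for the dispersive piece, and recognize the nonlinear piece as the zero-mode of a total derivative (the paper cites Lemma~\ref{Appendix2Id1} with $n=2$ for this, which is exactly your $(v^3)_x$ observation).
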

\begin{proof}
For simplicity, write $u=u_n.$
The local existence result, Theorem \ref{LocalExistenceBad}, indicates that $u \in C^1$ in time.
Therefore, by Proposition \ref{ExchangeDerivandInt}, we can put the time derivative inside the norm:
\[ 
\frac{\D}{\D t} \| u \|_{L^2_{qp}}^2 = \lim_{M \to \infty} \frac{1}{2M} \int^M_{-M} 2 u_t u dx = 2 \sum_{k \in \ZZ^N} (\partial_t \hat{u})(k) \hat{u}(-k).
\]
Using the equation we obtain:
\begin{align*}
	\sum_{k \in \ZZ^N} (\partial_t \hat{u}(k)) \hat{u}(-k) = \sum_{k \in \ZZ^N} \FT{\chi_n((\chi_n u) (\chi_n u_x))}(k) \hat{u}(-k) + \sum_{k \in \ZZ^N} \FT{(\chi_n Hu_{xx})}(k) \hat{u}(-k)
\end{align*}
For the nonlinear term, note that 
\[
	\sum_{k \in \ZZ^N} \FT{\chi_n((\chi_n u) (\chi_n u_x))}(k) \hat{u}(-k) = \sum_{k \in \ZZ^N} \FT{\chi_n(u) \chi_n u_{x}}(k) \FT{\chi_n u}(-k),
\]
where we transfer the $\chi_n$ to $\hat{u}(-k)$. 
Applying Lemma \ref{Appendix2Id1} with $n=2$ we obtain that
\[ 
	\sum_{k \in \ZZ^N} \FT{\chi_n(u) \chi_n u_{x}}(k) \FT{\chi_n u}(-k) = \sum_{k \in \ZZ^N} \FT{\chi_n u_{x}} \FT{(\chi_n u)^2}(-k),
\]
vanishes. 

As for the linear term, since $\chi_n = (\chi_n)^2,$ we can write 
\[ 
	\sum_{k \in \ZZ^N} \FT{(\chi_n Hu_{xx})}(k) \hat{u}(-k) = \sum_{k \in \ZZ^N} \FT{(\chi_n Hu_{xx})}(k) \FT{\chi_n u}(-k)
\]
and apply Lemma \ref{lineartermvanishes} to conclude that the linear term vanishes.

Thus, we see that 
\[
	\frac{\D}{\D t} \| u \|_{L^2_{qp}}^2 = 0,
\]
which implies that $\| u \|_{L^2_{qp}}^2 = \text{constant} = \| u_0 \|_{L^2_{qp}}^2$.
\end{proof}

\begin{prop}\label{UB-prop}
For $s > N/2 + 1$, the solutions of each regularized problem satisfy the following bound:
\[
	\| D_x^s u_n \|_{L^2_{qp}} \leq ((\| D^{s}_x u_0\|_{L^2_{qp}} + \| u_0\|_{L^2_{qp}})^{-1} - Ct)^{-1},
\]
independently of $n$, for a fixed $C = C(s)$.
This bound holds if $C^{-1}(\| D^s_x u_0 \|_{L^2_{qp}} + \| u_0 \|_{L^2_{qp}})^{-1} > t$.
\end{prop}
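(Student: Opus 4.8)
The strategy is the standard energy method applied to the highest-order piece $D^s_x u_n$. Writing $u = u_n$ for brevity and using that $u \in C^1$ in time (Theorem~\ref{LocalExistenceBad}) together with Proposition~\ref{ExchangeDerivandInt}, I would compute
\[
\frac{\D}{\D t}\|D^s_x u\|_{L^2_{qp}}^2 = 2\sum_{k\in\ZZ^N}\FT{D^s_x u_t}(k)\,\FT{D^s_x u}(-k),
\]
and then substitute the regularized equation \eqref{LWPeq1} for $u_t$. This splits the right-hand side into a linear (dispersive) contribution coming from $\chi_n[Hu_{xx}]$ and a nonlinear contribution coming from $\chi_n[(\chi_n u)(\chi_n u_x)]$.

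\emph{Linear term.} Applying $D^s_x$ and moving one factor of $\chi_n = \chi_n^2$ onto $\FT{D^s_x u}(-k)$, the linear term becomes $\sum_k \FT{H\partial_x^2 D^s_x(\chi_n u)}(k)\,\FT{D^s_x(\chi_n u)}(-k)$, which vanishes by Proposition~\ref{lineartermvanishes} (applied to $\chi_n u \in H^s_{qp}$). So the dispersion contributes nothing, as expected.

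\emph{Nonlinear term.} This is the main obstacle. After moving a $\chi_n$ onto the conjugate factor, I must estimate
\[
\Big|\sum_{k\in\ZZ^N}\FT{D^s_x\big((\chi_n u)(\chi_n u_x)\big)}(k)\,\FT{D^s_x(\chi_n u)}(-k)\Big|.
\]
The trick is to commute $D^s_x$ past $\chi_n u$: write $D^s_x\big((\chi_n u)(\chi_n u_x)\big) = (\chi_n u)\,D^s_x(\chi_n u_x) + \big[D^s_x\big((\chi_n u)(\chi_n u_x)\big) - (\chi_n u)D^s_x(\chi_n u_x)\big]$. The commutator piece is controlled by the fractional Leibniz rule (Proposition~\ref{FractionalLeibniz}) with $v = \chi_n u_x$, $s_0$ chosen with $N/2 < s_0 < s-1$ (possible since $s > N/2+1$), giving a bound by $\|\chi_n u\|_{H^s_{qp}}\|\chi_n u_x\|_{H^{s_0}_{qp}} + \|\chi_n u\|_{H^{s_0+1}_{qp}}\|\chi_n u_x\|_{H^{s-1}_{qp}} \lesssim \|u\|_{H^s_{qp}}^2$, and then Cauchy--Schwarz against $\|D^s_x(\chi_n u)\|_{L^2_{qp}} \le \|u\|_{H^s_{qp}}$. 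For the remaining main term $\sum_k \FT{(\chi_n u)D^s_x(\chi_n u_x)}(k)\,\FT{D^s_x(\chi_n u)}(-k)$, I would integrate by parts in $x$ (equivalently, symmetrize in Fourier): since $D^s_x(\chi_n u_x) = \partial_x D^s_x(\chi_n u)$, this is $\sum_k \FT{(\chi_n u)\,\partial_x(D^s_x \chi_n u)}(k)\,\FT{D^s_x \chi_n u}(-k)$, which upon writing $w = D^s_x(\chi_n u)$ equals $\tfrac12\sum_k \FT{(\partial_x \chi_n u)\, w}(k)\,\hat w(-k)$ type expression after using $\sum \FT{(\chi_n u)(w w_x)} = 0$ (Lemma referenced as \ref{Appendix2Id1}); this is bounded by $\|\partial_x(\chi_n u)\|_{L^\infty}\|w\|_{L^2_{qp}}^2$, and the Sobolev inequality (Proposition~\ref{SobolevInequality}) with $s-1 > N/2$ gives $\|\partial_x(\chi_n u)\|_{L^\infty} \lesssim \|u\|_{H^s_{qp}}$.

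\emph{Conclusion via ODE comparison.} Combining, and setting $y_n(t) := \|D^s_x u_n\|_{L^2_{qp}} + \|u_n\|_{L^2_{qp}}$ — where the second summand is constant in $t$ by Proposition~\ref{UB-prop-L2} — the above yields a differential inequality of the form $\frac{\D}{\D t}\|D^s_x u_n\|_{L^2_{qp}}^2 \le C\big(\|D^s_x u_n\|_{L^2_{qp}} + \|u_n\|_{L^2_{qp}}\big)\|D^s_x u_n\|_{L^2_{qp}}^2 \le C\, y_n^3$, hence $\frac{\D}{\D t} y_n \le C y_n^2$ with $C = C(s)$ independent of $n$. Integrating this Riccati inequality gives $y_n(t) \le (y_n(0)^{-1} - Ct)^{-1}$ for $t < C^{-1}y_n(0)^{-1}$, and since $y_n(0) = \|D^s_x u_0\|_{L^2_{qp}} + \|u_0\|_{L^2_{qp}}$, together with $\|D^s_x u_n\|_{L^2_{qp}} \le y_n$ this is exactly the claimed bound. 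The $n$-independence is automatic because every constant above comes from the fractional Leibniz and Sobolev estimates, which do not see $\chi_n$, and $\|\chi_n v\|_{H^\sigma_{qp}} \le \|v\|_{H^\sigma_{qp}}$ for every $\sigma$.
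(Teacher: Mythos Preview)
Your proof is correct and follows essentially the same route as the paper's: the same commutator decomposition via Proposition~\ref{FractionalLeibniz} (the paper simply takes $s_0=s-1$), the same integration by parts on the main piece (this is Lemma~\ref{IntegrationByParts} rather than Lemma~\ref{Appendix2Id1}), and the same Riccati comparison at the end. One minor bookkeeping slip: the estimates actually produce $\frac{\D}{\D t}\|D^s_x u_n\|_{L^2_{qp}}^2 \lesssim y_n^{2}\,\|D^s_x u_n\|_{L^2_{qp}}$ rather than $y_n\,\|D^s_x u_n\|_{L^2_{qp}}^2$ (the commutator piece contributes $\|u\|_{H^s_{qp}}^{2}\|D^s_x u\|_{L^2_{qp}}$), and it is this form---not merely the cruder bound $Cy_n^{3}$---that lets you divide through by $2\|D^s_x u_n\|_{L^2_{qp}}$ and conclude $\frac{\D}{\D t}y_n \le Cy_n^{2}$.
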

\begin{proof}
As before, Proposition \ref{ExchangeDerivandInt} allows to estimate the time derivative of the Sobolev norm:
\begin{align*}
	\frac{\D \| D_x^s u \|_{L^2_{qp}}^2}{\D t} &= 2 \sum_{k \in \ZZ^N} \FT{D_x^s u}_t(k) \FT{D_x^s u}(-k) \\
	&= 2 \sum_{k \in \ZZ^N} \FT{(D^s_x [\chi_n[(\chi_n u) (\chi_n u_{x})] + \chi_n[H u_{xx}])}(k) \FT{D_x^s u}(-k) \\
	&= 2 \sum_{k \in \ZZ^N} \FT{D^s_x \chi_n[(\chi_n u) (\chi_n u_{x})]}(k) \FT{D_x^s u}(-k)  \\
	&\qquad \qquad + 2 \sum_{k \in \ZZ^N} \FT{ D^s_x \chi_n[H u_{xx}]}(k) \FT{D_x^s u}(-k),
\end{align*}
where we have used the regularized evolution equation, and we have split the sum into linear and nonlinear parts.
Since operators $D^s_x$ and $\chi_n H \partial_x^2$ commute, we can rewrite the linear term as
\begin{align*}
	\sum_{k \in \ZZ^N} \FT{ D^s_x \chi_n[H u_{xx}]}(k) \FT{D_x^s u}(-k) &=  \sum_{k \in \ZZ^N} \FT{\chi_n[H\partial_x^2  D^s_x u]}(k) \FT{D_x^s u}(-k) \\
	&= \sum_{k \in \ZZ^N} \FT{\chi_n[H\partial_x^2  D^s_x u]}(k) \FT{\chi_n(D_x^s u)}(-k), 
\end{align*}
so by Lemma \ref{lineartermvanishes}, this term is 0.

We now deal with the nonlinear term. To simplify the notation, let $v = \chi_n u$ and write
\begin{align*}
	\sum_{k \in \ZZ^N} \FT{D^s_x \chi_n[(\chi_n u) (\chi_n u_{x})]}(k) \FT{D_x^s u}(-k) &= 	\sum_{k \in \ZZ^N} \FT{D^s_x[(\chi_n u) (\chi_n u_{x})]}(k) \FT{D_x^s \chi_n u}(-k) \\
	&=\sum_{k \in \ZZ^N} \FT{D^s_x(vv_x)}(k) \FT{D_x^s v}(-k).
\end{align*}
Add and subtract $v D^s_x v$ to obtain:
\[ 
	\sum_{k \in \ZZ^N} \FT{D^s_x(vv_x) - v D^s_x v_x}(k) \FT{D_x^s v}(-k) + \sum_{k \in \ZZ^N} \FT{v D^s_x v_x}(k) \FT{D_x^s v}(-k) := I + II.
\]
The term $I$ is easily dealt with by applying the Cauchy-Schwarz inequality and the fractional Leibniz rule (Proposition \ref{FractionalLeibniz}):
\begin{align*}
	I &\leq \| D^s_x(vv_x) - v D^s_x v \|_{L^2_{qp}} \| D_x^s v \|_{L^2_{qp}} \\
	&\lesssim \big(\| v \|_{H^{s}_{qp}} \| v_x\|_{H^{s_0}_{qp}} + \| v \|_{H^{s_0+1}_{qp}} \| v_x\|_{H^{s-1}_{qp}}\big) \| D_x^s v \|_{L^2_{qp}},
\end{align*}
for some $s_0 > N/2.$
In particular, letting $s_0 = s-1 >N/2$ yields  
\[
	I \lesssim \big( \| v \|_{H^{s}_{qp}} \| v_x\|_{H^{s-1}_{qp}} + \| v \|_{H^{s-1+1}_{qp}} \| v_x\|_{H^{s-1}_{qp}} \big) \| D_x^s v \|_{L^2_{qp}}\leq \| v \|_{H^{s}_{qp}}^2 \| D_x^s v \|_{L^2_{qp}}.
\] 
As for $II$, we first apply integration by parts from Lemma \ref{IntegrationByParts}, yielding
\[
	II  = \sum_{k \in \ZZ^N} \FT{v D^s_x v_x}(k) \FT{D_x^s v}(-k) = - \frac{1}{2}\sum_{k \in \ZZ^N} \FT{v_x D_x^s v}(k) \FT{D_x^s v}(-k).
\]
We can then apply the Cauchy-Schwarz, Young's, and Sobolev inequalities to obtain
\begin{align*}
	II \lesssim \sum_{k \in \ZZ^N} |\FT{v_x D_x^s v}(k) \FT{D_x^s v}(-k)|
	&\lesssim \|\FT{v_x D_x^s v} \|_{\ell^2_k } \| \FT{D_x^s v} \|_{\ell^2_k}\\
	&\lesssim \|\FT{v_x}\|_{\ell^1_k}\| \FT{D_x^s v} \|_{\ell^2_k}^2 \\
	&\lesssim \| v \|_{H^{s}_{qp}}\| D^s_x v\|_{L^2_{qp}}^2.
\end{align*} 
All in all, we see that the nonlinear term is bounded by 
\[
I + II \lesssim \| v \|_{H^{s}_{qp}}\| D^s_x v\|_{L^2_{qp}}^2 + \| v \|_{H^{s}_{qp}}^2 \| D^s_x v\|_{L^2_{qp}}.
\]
Finally, note that by Proposition \ref{UB-prop-L2}, $\| v\|_{L^2_{qp}}$ is bounded; thus, we have
\[ 
\| v \|_{H^s_{qp}} \lesssim  \| v\|_{L^2_{qp}} + \| D^s_x v \|_{L^2_{qp}} \lesssim \| v_0 \|_{L^2_{qp}} + \| D^s_x v \|_{L^2_{qp}}.
\]
As a result,
\begin{align*}
	I + II &\lesssim \| v \|_{H^{s}_{qp}}\| D^s_x v\|_{L^2_{qp}}^2 + \| v \|_{H^{s}_{qp}}^2 \| D^s_x v\|_{L^2_{qp}} \\
	&\leq
	(\| u_0 \|_{L^2_{qp}} + \| D^s_x v \|_{L^2_{qp}})\| D^s_x v\|_{L^2_{qp}}^2 + (\| u_0 \|_{L^2_{qp}}^2 + \| D^s_x v \|_{L^2_{qp}}^2)\| D^s_x v\|_{L^2_{qp}} \\
&\lesssim \| D^s_x v\|_{L^2_{qp}}^3 + \| u_0 \|_{L^2_{qp}} \| D^s_x v\|_{L^2_{qp}}^2 + \| u_0 \|_{L^2_{qp}}^2 \| D^s_x v\|_{L^2_{qp}} \\
&\lesssim \| D^s_x v\|_{L^2_{qp}} (\| D^s_x v\|_{L^2_{qp}} + \| v_0\|_{L^2_{qp}})^2,
\end{align*}
so that
\[
\frac{\D \| D^s_x u_n \|_{L^2_{qp}}^2}{\D t} = I + II \lesssim \| D^s_x v\|_{L^2_{qp}} (\| D^s_x v\|_{L^2_{qp}} + \| v_0\|_{L^2_{qp}})^2.
\]
Thus, 
\[
\frac{\D \| D^s_x u_n \|_{L^2_{qp}}}{\D t} \leq C (\| D^s_x v\|_{L^2_{qp}} + \| v_0\|_{L^2_{qp}})^2,
\]
where $C$ depends on $s.$ 
A Gronwall-type argument then reveals that $\| D^s_x u_n \|_{L^2_{qp}}$ is bounded, independently of $n$:
\[ 
	\| D^s_x u_n \|_{L^2_{qp}} \leq ((\| D^{s}_x u_0\|_{L^2_{qp}} + \| v_0\|_{L^2_{qp}})^{-1} - Ct)^{-1}.	
\]
This bound holds whenever $C^{-1}(\| D^s_x u_0 \|_{L^2_{qp}} + \| v_0\|_{L^2_{qp}})^{-1} > t$.
\end{proof}

We have now established the uniform bound and the existence of a common time interval, independent of $n.$ 
\begin{cor}[Uniform bound for the regularized problem]\label{UniformBound}
Let $s > N/2 +1.$ 
Let $u_0 \in H^s_{qp}.$ 
Then, solutions $u_{n}$ to the regularized Benjamin-Ono equation \eqref{LWPeq1} with initial data $u_0$ satisfy the following bound independent of parameters $n:$
\begin{equation}\label{UB-prop-est2}
	\| u_{n} \|_{H^s_{qp}} \leq \| u_{n} \|_{L^2_{qp}} + \| D_x^s u_{n} \|_{L^2_{qp}} \lesssim \| u_0 \|_{H^s_{qp}} + ((\| u_0 \|_{H^s_{qp}})^{-1} - Ct)^{-1},
\end{equation}
Furthermore, these solutions exist on a common time interval $T$ that only depends on $s$ and the norm $\| u_0 \|_{H^s_{qp}}.$
In particular, if the initial data is chosen from a compact set $\mathcal{K} \subseteq H^s_{qp}$, then $T$ only depends on $\mathcal{K}$ and not a particular choice of the initial data.
\end{cor}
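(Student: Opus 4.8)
The plan is to assemble the corollary from Propositions \ref{UB-prop-L2} and \ref{UB-prop} together with a standard continuation argument, the only non-routine ingredient being the latter. First I would record the elementary two-sided comparison: since $s\geq 1$ we have $|k|^{2s}\leq \jp{|k|}^{2s}=(1+|k|^2)^s\leq 2^s(1+|k|^{2s})$ for every $k\in\ZZ^N$, so that
\[
	c_1\,\| w \|_{H^s_{qp}} \;\leq\; \| w \|_{L^2_{qp}} + \| D^s_x w \|_{L^2_{qp}} \;\leq\; c_2\,\| w \|_{H^s_{qp}}
\]
with $c_1,c_2$ depending only on $s$; applied to $w=u_n$ this is the first inequality of \eqref{UB-prop-est2}. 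Writing $a=\| D^s_x u_0\|_{L^2_{qp}}+\| u_0\|_{L^2_{qp}}$ we then have $c_1\|u_0\|_{H^s_{qp}}\leq a\leq c_2\|u_0\|_{H^s_{qp}}$. Proposition \ref{UB-prop-L2} gives $\| u_n\|_{L^2_{qp}}=\| u_0\|_{L^2_{qp}}\leq \| u_0\|_{H^s_{qp}}$, and Proposition \ref{UB-prop} gives $\| D^s_x u_n\|_{L^2_{qp}}\leq (a^{-1}-Ct)^{-1}$ wherever the right side is finite. Because $a\mapsto (a^{-1}-Ct)^{-1}=a/(1-Cat)$ is increasing in $a$ in the relevant range, one may replace $a$ by the larger quantity $c_2\|u_0\|_{H^s_{qp}}$; absorbing $c_2$ into the implied constant and into $C$ then yields $\| D^s_x u_n\|_{L^2_{qp}}\lesssim ((\|u_0\|_{H^s_{qp}})^{-1}-Ct)^{-1}$, and adding the $L^2$ bound gives the right-hand side of \eqref{UB-prop-est2}.

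It remains to produce a common time of existence. For each $n$, Theorem \ref{LocalExistenceBad} (the Picard theorem applied to the map $F$) produces a maximal time $T_n^{*}$ with $u_n\in C^1([0,T_n^{*});H^s_{qp})$, and the blow-up alternative for ODEs in a Banach space holds: if $T_n^{*}<\infty$ then $\limsup_{t\to T_n^{*}}\|u_n(t)\|_{H^s_{qp}}=\infty$, since otherwise $\dot u_n=F(u_n)$ stays bounded, $u_n(t)$ converges in $H^s_{qp}$ as $t\to T_n^{*}$, and Picard restarts the solution beyond $T_n^{*}$. On the other hand, the derivation of Proposition \ref{UB-prop} uses the equation only on the interval of existence, so $\| D^s_x u_n(t)\|_{L^2_{qp}}$, hence $\|u_n(t)\|_{H^s_{qp}}$, stays bounded for those $t\in[0,T_n^{*})$ with $t<(Ca)^{-1}$. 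Taking $T:=\tfrac12\,(C c_2\| u_0\|_{H^s_{qp}})^{-1}$, so that $2T\leq (Ca)^{-1}$ and the bound stays $\leq 2a$ on $[0,T]$, forces $T_n^{*}>T$ for every $n$, and \eqref{UB-prop-est2} then holds on all of $[0,T]$ uniformly in $n$. Since $T$ depends on the data only through $\| u_0\|_{H^s_{qp}}$, and a compact — indeed merely bounded — set $\mathcal{K}\subset H^s_{qp}$ satisfies $\sup_{u_0\in\mathcal{K}}\| u_0\|_{H^s_{qp}}<\infty$, the same $T=T(s,\mathcal{K})$ works for all $u_0\in\mathcal{K}$.

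The genuinely substantive point, and the one I expect to need the most care, is the continuation step: making precise that the Picard solution can be restarted — so that a finite maximal existence time forces the $H^s_{qp}$-norm to blow up — and thereby bootstrapping the a priori bound of Proposition \ref{UB-prop} into persistence up to the time $T$. Everything else is norm-equivalence algebra and a direct substitution of the two propositions.
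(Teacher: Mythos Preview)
Your proposal is correct and follows the same approach as the paper, which in fact treats this corollary as an immediate consequence of Propositions \ref{UB-prop-L2} and \ref{UB-prop} without supplying a separate proof (the text just says ``We have now established the uniform bound and the existence of a common time interval, independent of $n$''). You have simply made explicit the two points the paper leaves implicit: the norm equivalence $\|\cdot\|_{H^s_{qp}}\sim \|\cdot\|_{L^2_{qp}}+\|D^s_x\cdot\|_{L^2_{qp}}$ (which the paper records just before Proposition \ref{UB-prop-L2}), and the standard continuation/blow-up alternative for the Picard ODE that upgrades the a~priori bound into existence on the full interval $[0,T]$.
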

%
%

At last, the result below allows us to estimate solutions in higher order Sobolev spaces.
\begin{prop}[Refined boundedness]\label{refinedbound}
Let $s > N/2 + 1$. Let $\delta > 1$.
Suppose $u_{n,\delta}$ solves the regularized Benjamin-Ono equation with parameter $n$ and regularized initial data $(u_0)_\delta,$ as defined in \eqref{regData}.
Further, let $T>0$ be such that these solutions exist on the same interval $[0,T].$
Then, $u_{n,\delta}$ satisfy the following estimate:
\[ 
\| u_{n,\delta} \|_{H^{s+l}_{qp}} \lesssim \delta^l \| u_0\|_{H^s_{qp}},
\]
for any $l >0$, for any $t \in [0,T].$
\end{prop}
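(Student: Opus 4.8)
The plan is to run the energy method again, but this time at the level of $H^{s+l}_{qp}$, and to exploit the fact that the initial data $(u_0)_\delta$ is band-limited so that its high Sobolev norms are controlled by $\delta^l$ times its $H^s_{qp}$ norm via \eqref{RD1}. Concretely, abbreviate $w = u_{n,\delta}$ and $v = \chi_n w$, and differentiate $\| D_x^{s+l} w \|_{L^2_{qp}}^2$ in time using Proposition \ref{ExchangeDerivandInt}. The linear term vanishes exactly as before by Proposition \ref{lineartermvanishes} (with $s$ replaced by $s+l$), after transferring one $\chi_n$ onto $\FT{D_x^{s+l} w}(-k)$ and using $\chi_n = \chi_n^2$. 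For the nonlinear term one adds and subtracts $v D_x^{s+l} v$ and splits into $I + II$; $I$ is handled by Cauchy--Schwarz and the fractional Leibniz rule (Proposition \ref{FractionalLeibniz}), and $II$ by integration by parts (Lemma \ref{IntegrationByParts}), Young, and the Sobolev inequality (Proposition \ref{SobolevInequality}), exactly paralleling the proof of Proposition \ref{UB-prop}. The key difference in applying the fractional Leibniz rule is that we must keep one factor at regularity $s+l$ and place the other factor's derivatives at a fixed level $s_0 > N/2$, which is affordable because on $[0,T]$ we already know $\|w\|_{H^s_{qp}}$ is bounded by Corollary \ref{UniformBound}, and $s > N/2+1$ guarantees $s-1 > N/2$.

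The outcome of these estimates should be a differential inequality of the form
\[
\frac{\D}{\D t} \| D_x^{s+l} w \|_{L^2_{qp}} \lesssim \| w \|_{H^s_{qp}} \big( \| D_x^{s+l} w \|_{L^2_{qp}} + \| w \|_{L^2_{qp}} \big),
\]
where the crucial point is that the nonlinearity is \emph{linear} in the top-order quantity $\| D_x^{s+l} w \|_{L^2_{qp}}$, with coefficient governed only by the already-controlled lower norm $\| w \|_{H^s_{qp}}$ (via the fixed-regularity factor $s_0 = s-1$ in Proposition \ref{FractionalLeibniz}, and the $L^\infty$-type factor $\|v_x\|_{\ell^1_k}\lesssim\|w\|_{H^s_{qp}}$ in term $II$). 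Since $\|w\|_{H^s_{qp}} \lesssim \|u_0\|_{H^s_{qp}} + ((\|u_0\|_{H^s_{qp}})^{-1}-Ct)^{-1}$ is bounded by a constant $M = M(s, \|u_0\|_{H^s_{qp}}, T)$ on $[0,T]$, Gr\"onwall's inequality gives
\[
\| D_x^{s+l} w(t) \|_{L^2_{qp}} \leq e^{CMt}\big( \| D_x^{s+l} (u_0)_\delta \|_{L^2_{qp}} + \| (u_0)_\delta \|_{L^2_{qp}} \big) \lesssim \| (u_0)_\delta \|_{H^{s+l}_{qp}}
\]
for $t \in [0,T]$, with the implied constant depending on $s, \|u_0\|_{H^s_{qp}}, T, l$ but not on $n$ or $\delta$.

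Finally, combine with $\| w \|_{L^2_{qp}} = \| (u_0)_\delta \|_{L^2_{qp}} \leq \|u_0\|_{L^2_{qp}}$ from Proposition \ref{UB-prop-L2} to get $\| w \|_{H^{s+l}_{qp}} \lesssim \| w \|_{L^2_{qp}} + \| D_x^{s+l} w \|_{L^2_{qp}} \lesssim \| (u_0)_\delta \|_{H^{s+l}_{qp}}$, and then apply the regularized-data estimate \eqref{RD1}, $\| (u_0)_\delta \|_{H^{s+l}_{qp}} \lesssim_l \delta^l \| (u_0)_\delta \|_{H^s_{qp}} \leq \delta^l \| u_0 \|_{H^s_{qp}}$, to conclude. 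The main obstacle is bookkeeping in the fractional Leibniz step: one must verify that the splitting of regularities (top order $s+l$ on one factor, fixed order $s_0 \in (N/2, s-1]$ on the other) indeed closes the estimate linearly in the top-order norm rather than producing a cubic term in $\| D_x^{s+l} w\|_{L^2_{qp}}$, since a cubic dependence would only yield finite-time blow-up of the bound rather than a clean $\delta^l$ estimate valid on all of $[0,T]$; this is why it is essential that the quadratic-in-low-norm prefactor is used and that $T$ is the already-fixed common existence time.
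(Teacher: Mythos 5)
Your proposal is correct and follows essentially the same route as the paper: energy estimate at the $H^{s+l}_{qp}$ level, linear term killed by Proposition \ref{lineartermvanishes}, nonlinear term split via adding and subtracting $v D_x^{s+l} v_x$ and handled by the fractional Leibniz rule with the fixed low regularity $s_0 = s-1$ (so that the inequality stays linear in the top-order norm with coefficient controlled by the uniform bound), followed by Gr\"onwall and the band-limited data estimate \eqref{RD1}. The concern you raise at the end is exactly the point the paper's proof exploits, and your resolution of it matches theirs.
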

\begin{proof}
For simplicity, write $u = \chi_n u_{n,\delta}$ and take the time derivative of $\| D^{s + l}_x u_{n,\delta} \|_{L^2_{qp}}^2$.
Following the proof of the uniform bound yields
\begin{align*}
	\frac{\D \| D_x^{s+l} u_{n,\delta} \|_{L^2_{qp}}^2}{\D t}
	&= 2 \sum_{k \in \ZZ^N} \FT{D^{s+l}_x \chi_n[(\chi_n u_{n,\delta}) (\chi_n (u_{n,\delta})_{x})]}(k) \FT{D_x^{s+l} u_{n,\delta}}(-k)  \\
	&\qquad \qquad + 2 \sum_{k \in \ZZ^N} \FT{ D^{s+l}_x \chi_n[H (u_{n,\delta})_{xx}]}(k) \FT{D_x^{s+l} u_{n,\delta}}(-k) \\
	&= 2 \sum_{k \in \ZZ^N} \FT{D^{s+l}_x [u  u_x]}(k) \FT{D_x^{s+l} u}(-k).
\end{align*}
The linear term vanishes as before, while for the nonlinear term we add and subtract $u D^{s+l}_x u_x$ to get :
\begin{align*}
	\sum_{k \in \ZZ^N} \FT{D^{s+l}_x [u  u_x] - u D^{s+l}_x u_x}(k) \FT{D_x^{s+l} u}(-k) + \sum_{k \in \ZZ^N} \FT{u D^{s+l}_x u_x}(k) \FT{D_x^{s+l} u}(-k) =: I +II.
\end{align*}
For the sum $I$, apply the Cauchy-Schwarz inequality and use Proposition \ref{FractionalLeibniz} with $s_0 = s- 1$ to get
\begin{align*} 
	&\left| \sum_{k \in \ZZ^N} \FT{D^{s+l}_x [u  u_x] - u D^{s+l}_x u_x}(k) \FT{D_x^{s+l} u}(-k) \right|\\
	&\qquad \qquad \leq \| D^{s+l}_x [u  u_x] - u D^{s+l}_x u_x \|_{L^2_{qp}} \| D_x^{s+l} u\|_{L^2_{qp}} \\
	&\qquad \qquad \leq \left( \| u \|_{H^{s+l}_{qp}} \| u_x \|_{H^{s-1}_{qp}} + \| u \|_{H^{s}_{qp}}  \| u_x \|_{H^{s+l-1}_{qp}} \right)\| D_x^{s+l} u\|_{L^2_{qp}}.
\end{align*}
Note that $\| u_x \|_{H^{s-1}_{qp}}$ and $\| u \|_{H^{s}_{qp}}$ are bounded on $[0,T],$ so that 
\[
| I | \lesssim_T  \| u \|_{H^{s+l}_{qp}} \| D_x^{s+l} u\|_{L^2_{qp}} \lesssim \| (u_0)_\delta \|_{L^2_{qp}} \| D_x^{s+l} u\|_{L^2_{qp}} +  \| D_x^{s+l} u\|^2_{L^2_{qp}}.
\]
As for $II$, we again apply integration by parts, finding
\begin{align*}
	II  = \sum_{k \in \ZZ^N} \FT{u D^{s+l}_x u_x}(k) \FT{D_x^{s+l} u}(-k) = - \frac{1}{2}\sum_{k \in \ZZ^N} \FT{u_x D_x^{s+l} u}(k) \FT{D_x^{s+l} u}(-k),
\end{align*}
so that
\[ II \lesssim \| u_x D_x^{s+l} u \|_{L^2_{qp}} \| D_x^{s+l} u \|_{L^2_{qp}} \lesssim \| \FT u_x \|_{\ell^1_k} \| D_x^{s+l} u \|_{L^2_{qp}}^2.
\]
Applying the Sobolev inequality (Proposition \ref{SobolevInequality}) and the uniform bound on $\| u \|_{H^s_{qp}}$ yields 
\[
	\| \FT u_x \|_{\ell^1_k} \lesssim \|u_x \|_{H^{s-1}_{qp}} \lesssim \|u \|_{H^{s}_{qp}} \lesssim_T 1.
\]
Combining the bounds on $I$ and $II$, we obtain 
\[ 
	\frac{\D \| D_x^{s+l} u_{n,\delta} \|_{L^2_{qp}}^2}{\D t} \lesssim \| D_x^{s+l} u_{n,\delta} \|_{L^2_{qp}}^2.
\] 
Finally, applying Gronwall's inequality and properties of data regularization \eqref{RD1} yields
\[
	\| D_x^{s+l} u_{n,\delta} \|_{L^2_{qp}}^2 \lesssim \| D_x^{s+l} (u_0)_\delta \|_{L^2_{qp}}^2 \lesssim \delta^{2l} \| D_x^{s} (u_0)_\delta \|_{L^2_{qp}}^2.
\]
We clearly have 
\[
	\| u_{n,\delta} \|_{L^2_{qp}} = \| (u_0)_\delta \|_{L^2_{qp}} \leq \delta^{l} \| (u_0)_\delta \|_{L^2_{qp}},
\]
so that we obtain the desired inequality.
\end{proof}
\subsection{Cauchy estimate}
In this section, we provide a Cauchy estimate for solutions of the regularized problem.
We first outline our approach and provide a few results to be used later in the proof.
Let $u_0 \in H^s_{qp}$ and let $(u_0)_\delta$ be a regularized version of $u_0$ as defined in \eqref{regData}, with parameter $\delta>0$.
Let $n,m \in \NN$. Let $u_{n, \delta_1}$ solve
\begin{align*}
	u_t &= \chi_n[(\chi_n u_n) (\chi_n u_{nx})] + \chi_n[H u_{nxx}],  \\
	u(0,\cdot) &= (u_0)_{\delta_1},
\end{align*}
whereas $u_{m, \delta_2}$ denotes the solution of 
\begin{align*}
	u_t &= \chi_m[(\chi_m u_m) (\chi_m u_{mx})] + \chi_m[H u_{mxx}], \\
	u(0,\cdot) &= (u_0)_{\delta_2}.
\end{align*}
For ease of notation, write $u = u_{n, \delta_1}, v = u_{m,\delta_2}, w = u-v.$ 
Note that the set $\mathcal{U} := \{ (u_0)_{\delta}: \delta \geq 0 \} \cup u_0$ is both a complete subset of $H^s_{qp}$ and totally bounded $H^s_{qp}$ by the uniform bound.
Hence $\mathcal{U}$ is compact, and so by Corollary \ref{UniformBound}, $u$ and $v$ exist on the same time interval.

Write the difference of equations:
\begin{align*}
	u_t - v_t &= \chi_n[(\chi_n u_n) (\chi_n u_{nx})] + \chi_n[H u_{nxx}] - (\chi_m[(\chi_m u_m) (\chi_m u_{mx})] + \chi_m[H u_{mxx}]) \\
	&= \chi_n[(\chi_n u_n) (\chi_n u_{nx})] - \chi_m[(\chi_m u_m) (\chi_m u_{mx})] + \chi_n[H u_{nxx}] - \chi_m[H u_{mxx}].
\end{align*}
The linear term can be written as follows:
\begin{equation}\label{linearterm}
\begin{aligned}
	\chi_n[ H u_{xx}] - \chi_m[Hv_{xx}] &= \chi_n[ H u_{xx}] - \chi_m[ H u_{xx}] +\chi_m[ H u_{xx}] - \chi_m[Hv_{xx}] \\
	&= \chi_m[ H w_{xx}] +[\chi_n - \chi_m]Hu_{xx}.
\end{aligned}
\end{equation}
For the nonlinear term, observe the following equality:
\begin{align*}
	\chi_m[ (\chi_m w) (\chi_m w_x)] = \chi_m[ (\chi_m u) (\chi_m u_x)] &- \chi_m[(\chi_m u) (\chi_m v_x)] \\
	&- \chi_m[(\chi_m v) (\chi_m u_x)] + \chi_m[(\chi_m v) (\chi_m v_x)],
\end{align*}
so that 
\begin{align*}
	\chi_m[(\chi_m v) (\chi_m v_x)] =  \chi_m[ (\chi_m w) (\chi_m w_x)] &+ \chi_m[(\chi_m u) (\chi_m v_x)] \\
	&+ \chi_m[(\chi_m v) (\chi_m u_x)] - \chi_m[ (\chi_m u) (\chi_m u_x)] \\
	= \chi_m[ (\chi_m w) (\chi_m w_x)] &+ \chi_m[(\chi_m u) (\chi_m v_x)] - \chi_m[(\chi_m w) (\chi_m u_x)].
\end{align*}
As such it is straightforward to verify the following identity:
\begin{equation}\label{nonlinearterm}
\begin{aligned}
	\chi_n[ (\chi_n u) (\chi_n u_x)] &- \chi_m[ (\chi_m v) (\chi_m v_x)] \\
	&= \chi_m[ (\chi_m v) (\chi_m w_x)] + \chi_m[(\chi_m w)(\chi_m u)_x]+ \chi_m[(\chi_m u) ((\chi_n - \chi_m) u)_x] \\
	&+ \chi_m[ ((\chi_n - \chi_m)u) (\chi_n u)_x] + [\chi_n - \chi_m][\chi_n u \chi_n u_x].
\end{aligned}
\end{equation}
As is seen in both \eqref{linearterm} and \eqref{nonlinearterm}, there are terms with factors $\chi_n - \chi_m$.
The lemma below explains how to handle these terms.

\begin{lem}\label{DifferenceEst}
Let $n, m\in \NN$ and $l \geq 0.$ For any $v \in H^l_{qp},$ we have 
\[
\| (\chi_n - \chi_m) v \|_{L^2_{qp}} \leq \max\{1/n, 1/m\}^{l} \| D^{l}_x v \|_{L^2_{qp}}.
\]
\end{lem}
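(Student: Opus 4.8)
The plan is to pass immediately to the Fourier side and reduce the statement to a pointwise estimate on a Fourier multiplier. First I would assume, without loss of generality, that $m \leq n$, so that $\max\{1/n,1/m\} = 1/m$; the case $n \leq m$ follows by exchanging the roles of $n$ and $m$. By the definition of $\chi_n$ and $\chi_m$, the $k$-th Fourier coefficient of $(\chi_n - \chi_m)v$ equals $\big(\mathbb{I}_{|\alpha\cdot k|<n}(k) - \mathbb{I}_{|\alpha\cdot k|<m}(k)\big)\hat v(k)$, and since $m\leq n$ the scalar multiplier $\mathbb{I}_{|\alpha\cdot k|<n} - \mathbb{I}_{|\alpha\cdot k|<m}$ equals $1$ precisely on the index set $S := \{k \in \ZZ^N : m \leq |\alpha\cdot k| < n\}$ and $0$ off of it.

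From the definition of the norm this gives $\| (\chi_n - \chi_m) v \|_{L^2_{qp}}^2 = \sum_{k \in S} |\hat v(k)|^2$. Now I would use the defining inequality of $S$: for every $k \in S$ one has $|\alpha\cdot k| \geq m$, hence $|k| \geq m$, and therefore $1 \leq m^{-2l}|k|^{2l}$. Inserting this factor into each summand and then enlarging the sum from $S$ to all of $\ZZ^N$ yields
\[
\sum_{k \in S} |\hat v(k)|^2 \leq m^{-2l} \sum_{k \in S} |k|^{2l} |\hat v(k)|^2 \leq m^{-2l} \sum_{k \in \ZZ^N} |k|^{2l} |\hat v(k)|^2 = m^{-2l} \| D^l_x v \|_{L^2_{qp}}^2 .
\]
Taking square roots and recalling $m^{-1} = \max\{1/n,1/m\}$ gives the claim; the degenerate case $l = 0$ is just the trivial bound $\|(\chi_n-\chi_m)v\|_{L^2_{qp}} \leq \|v\|_{L^2_{qp}}$, valid since the multiplier is bounded by $1$.

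I do not anticipate a genuine obstacle here: this is a one-line Fourier-multiplier estimate once the correct index set $S$ is identified, and the rest is bookkeeping with the maximum appearing in the constant. The only point requiring a moment of care is the implication $|\alpha\cdot k| \geq m \Rightarrow |k| \geq m$, i.e. comparing the quantity $|\alpha\cdot k|$ that is truncated by $\chi_n$ against the Euclidean norm $|k|$ entering the definition of $D^l_x$; this is where the scaling of the frequency vector $\alpha$ enters, through $|\alpha\cdot k| \leq |k|$ (Cauchy--Schwarz). Everything else is routine.
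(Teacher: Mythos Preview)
Your argument is correct and is essentially identical to the paper's own proof: both identify the support of the multiplier as the set where $\min\{n,m\}\leq |\alpha\cdot k|<\max\{n,m\}$, use $|\alpha\cdot k|\leq |k|$ to convert this into a lower bound on $|k|$, and then insert the factor $|k|^{-2l}|k|^{2l}$ to extract the $D^l_x$ norm. The only cosmetic difference is that you fix $m\leq n$ at the outset, whereas the paper keeps $\min$ and $\max$ throughout.
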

\begin{proof}
Note that
\[ 
	\| (\chi_n - \chi_m) v \|_{L^2_{qp}}^2 = \sum_{k} |\hat{v}|^2(k) \mathbb{I}_{\min\{n,m\} \leq |\alpha \cdot k| \leq \max\{n,m\}}.
\]
Since $\min\{n,m\} \leq |\alpha \cdot k| \leq |k|$, we know that $|k|^{-1} \leq \max\{1/n, 1/m\}$.
Thus, 
\begin{align*}
	\| (\chi_n - \chi_m) v \|_{L^2_{qp}}^2 &= \sum_{k} |\hat{v}|^2(k) \mathbb{I}_{\min\{n,m\}(k) \leq |\alpha \cdot k| \leq \max\{n,m\}} \\
	&= \sum_{k} \mathbb{I}_{\min\{n,m\} \leq |\alpha \cdot k| \leq \max\{n,m\}}(k) |\hat{v}|^2(k) |k|^{-2l} |k|^{2l} \\
	&\leq \big( \max\{1/n, 1/m\}\big)^{2l} \sum_{k} \mathbb{I}_{\min\{n,m\} \leq |\alpha \cdot k| \leq \max\{n,m\}}(k) |\FT{\partial_x^l v}|^2(k) \\
	&\leq \big( \max\{1/n, 1/m\}\big)^{2l}  \| D_x^l v \|_{L^2_{qp}}^2,
\end{align*}
from which the lemma follows.
\end{proof}
We are now ready to prove Cauchy estimates. 
We begin with the $L^2_{qp}$ case.
\begin{prop}
Let $m, n \in \NN$. Then, solutions $u = u_{n, \delta_1}, v = u_{m, \delta_2}$ be defined as above. Then, 
\[
	\frac{\D}{\D t} \| u - v\|_{L^2_{qp}}^2 \lesssim \| u - v\|_{L^2}^2 + \max\{1/n, 1/m\}^{s-2} \| u - v\|_{L^2},
\]
where implicit constants do not depend on $n,m$, but may depend on the uniform bound derived before. 
In particular, we have
\begin{equation}\label{L2CauchyEstimate}
	\| u-v \|_{L^2_{qp}} \lesssim \max\{1/n, 1/m\}^{s-2} + \|(u_0)_{\delta_1} - (u_0)_{\delta_2} \|_{L^2_{qp}}.
\end{equation}
\end{prop}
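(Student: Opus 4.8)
The plan is to differentiate $\|w\|_{L^2_{qp}}^2$ in time, substitute the difference of the two regularized equations through the decompositions \eqref{linearterm} and \eqref{nonlinearterm}, and show that each resulting term is controlled either by $\|w\|_{L^2_{qp}}^2$ or by $\max\{1/n,1/m\}^{s-2}\,\|w\|_{L^2_{qp}}$. Throughout, $\|u\|_{H^s_{qp}}$ and $\|v\|_{H^s_{qp}}$ are bounded on $[0,T]$: the data $(u_0)_{\delta_1}$, $(u_0)_{\delta_2}$ lie in the compact set $\mathcal{U}$, so Corollary \ref{UniformBound} applies. Since $w \in C^1([0,T];H^s_{qp})$, Proposition \ref{ExchangeDerivandInt} lets us write
\[
\frac{\D}{\D t}\|w\|_{L^2_{qp}}^2 = 2\sum_{k\in\ZZ^N}\FT{w_t}(k)\,\FT{w}(-k),
\]
where $\FT{w_t}$ is the sum of the Fourier transforms of the two terms in \eqref{linearterm} and the five terms in \eqref{nonlinearterm}.

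First I would dispose of the principal contributions. For the leading linear term $\chi_m[Hw_{xx}]$, writing $\chi_m=\chi_m^2$ and transferring one copy of $\chi_m$ onto $\FT{w}(-k)$ makes the summand odd under $k\mapsto -k$ over the symmetric index set $\{k:|\alpha\cdot k|<m\}$, so this term vanishes exactly as in Proposition \ref{lineartermvanishes}. For the leading quadratic term $\chi_m[(\chi_m v)(\chi_m w_x)]$: transferring a $\chi_m$ onto $\FT{w}(-k)$ and setting $V=\chi_m v$, $W=\chi_m w$ turns it into $\sum_k \FT{V W_x}(k)\FT{W}(-k)$, which after integration by parts (Lemma \ref{IntegrationByParts}) equals $-\tfrac12\sum_k \FT{V_x W}(k)\FT{W}(-k)$; Cauchy--Schwarz in $\ell^2_k$, Young's convolution inequality, and the Sobolev $\ell^1$ bound of Proposition \ref{SobolevInequality} then give $\lesssim \|\FT{V_x}\|_{\ell^1_k}\|W\|_{L^2_{qp}}^2 \lesssim \|v\|_{H^s_{qp}}\|w\|_{L^2_{qp}}^2 \lesssim_T \|w\|_{L^2_{qp}}^2$. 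This is exactly where $s>N/2+1$ enters: it is what makes $\sum_k \jp{k}^{-2(s-1)}$ converge, hence $\|\FT{V_x}\|_{\ell^1_k}\lesssim\|V\|_{H^s_{qp}}$. The term $\chi_m[(\chi_m w)(\chi_m u)_x]$ is handled the same way --- here no integration by parts is needed, one just places $\ell^2_k$ on the two copies of $\FT{\chi_m w}$ and $\ell^1_k$ on $\FT{(\chi_m u)_x}$ --- yielding another $\lesssim_T \|w\|_{L^2_{qp}}^2$.

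Next I would treat the error terms carrying the factor $\chi_n-\chi_m$: the term $[\chi_n-\chi_m]Hu_{xx}$ of \eqref{linearterm} and the last three summands of \eqref{nonlinearterm}. On each I apply Cauchy--Schwarz to peel off $\|w\|_{L^2_{qp}}$, estimate the remaining $L^2_{qp}$ norm by distributing derivatives with the algebra property and Proposition \ref{SobolevInequality}, and then invoke Lemma \ref{DifferenceEst} with exponent $l=s-2\geq 0$ to extract the factor $\max\{1/n,1/m\}^{s-2}$ at the cost of at most $\|u\|_{H^s_{qp}}^{2}\lesssim_T 1$. For instance $\|[\chi_n-\chi_m]Hu_{xx}\|_{L^2_{qp}}\leq \max\{1/n,1/m\}^{s-2}\|D_x^{s-2}Hu_{xx}\|_{L^2_{qp}}\lesssim \max\{1/n,1/m\}^{s-2}\|u\|_{H^s_{qp}}$; for $[\chi_n-\chi_m][\chi_n u\,\chi_n u_x]=\tfrac12[\chi_n-\chi_m]\partial_x\big((\chi_n u)^2\big)$ one uses $\|D_x^{s-2}\partial_x((\chi_n u)^2)\|_{L^2_{qp}}\leq \|(\chi_n u)^2\|_{H^{s-1}_{qp}}\lesssim \|\chi_n u\|_{H^{s-1}_{qp}}^2$, which is finite because $s-1>N/2$; and for the two mixed terms one commutes $\partial_x$ past $\chi_n-\chi_m$ and puts an $L^\infty$ (Sobolev) bound on the remaining smooth factor. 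Collecting everything gives the asserted differential inequality
\[
\frac{\D}{\D t}\|w\|_{L^2_{qp}}^2 \lesssim \|w\|_{L^2_{qp}}^2 + \max\{1/n,1/m\}^{s-2}\,\|w\|_{L^2_{qp}}.
\]

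Finally, to obtain \eqref{L2CauchyEstimate}, set $\epsilon=\max\{1/n,1/m\}^{s-2}$, absorb $\epsilon\|w\|_{L^2_{qp}}\leq \tfrac12\epsilon^2+\tfrac12\|w\|_{L^2_{qp}}^2$, and apply Gronwall's inequality to $t\mapsto \|w(t)\|_{L^2_{qp}}^2+\epsilon^2$ on $[0,T]$ --- legitimate since $\epsilon$ does not depend on $t$. This yields $\|w(t)\|_{L^2_{qp}}\lesssim_T \|w(0)\|_{L^2_{qp}}+\epsilon$, and $w(0)=(u_0)_{\delta_1}-(u_0)_{\delta_2}$, which is precisely \eqref{L2CauchyEstimate}. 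The main obstacle I anticipate is the bookkeeping of the $\chi_n-\chi_m$ terms: one must check that each genuinely contributes a power at least $s-2$ of $\max\{1/n,1/m\}$ while every companion factor stays bounded by the uniform bound, and that all of the Sobolev-multiplication and $\ell^1$-convolution steps close at the borderline regularity $s>N/2+1$.
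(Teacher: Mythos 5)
Your proposal is correct and follows essentially the same route as the paper: the same decompositions \eqref{linearterm} and \eqref{nonlinearterm}, the vanishing of $\chi_m[Hw_{xx}]$ via Proposition \ref{lineartermvanishes}, energy cancellation plus the $\ell^1$--$\ell^2$ Young/Sobolev bound for the two leading quadratic terms, Lemma \ref{DifferenceEst} combined with the uniform bound for the $\chi_n-\chi_m$ error terms, and Gronwall at the end. The only (harmless) difference is bookkeeping: you uniformly extract the exponent $s-2$ from Lemma \ref{DifferenceEst}, whereas the paper takes $s-1$ or $s$ where available, but since the final rate is set by the slowest-decaying term this changes nothing.
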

\begin{proof}
As before, we take the time derivative of the $L^2_{qp}$ norm:
\begin{align*}
	\frac{\D \| w \|_{L^2_{qp}}^2}{\D t} &= 2 \sum_{k} \FT{(u-v)_t}(k) \hat{w}(-k) \\
	&= 2 \sum_{k} \FT{\chi_n[(\chi_n u_n) (\chi_n u_{nx})] - \chi_m[(\chi_m u_m) (\chi_m u_{mx})]}(k) \hat{w}(-k) \\
	&\qquad + 2 \sum_{k} \FT{\chi_n[H u_{nxx}] - \chi_m[H u_{mxx}]}(k) \hat{w}(-k).
\end{align*}
Following the analysis above, the linear part can be estimated following \eqref{linearterm}.
Thus, 
\begin{align*}
	\sum_{k \in \ZZ^N} &\FT{(\chi_n[ H u_{xx}] - \chi_m[Hv_{xx}])}(k) \FT w(-k) \\
	&= \underbrace{\sum_{k \in \ZZ^N}\FT{(\chi_m[ H w_{xx}])}(k) \FT w(-k)}_\text{$=0$ by Lemma \ref{lineartermvanishes}} 
	+ \sum_{k \in \ZZ^N}\FT{([\chi_n - \chi_m]Hu_{xx})}(k) \FT w(-k).
\end{align*}
By applying Cauchy-Schwarz inequality, it is enough to estimate 
\[ 
\| \FT{D^{2}_x ([\chi_n - \chi_m]u)}\|_{\ell^2_k}.
\]
By Lemma \ref{DifferenceEst}, we can bound 
\begin{align*}
	\| \FT{D^{2}_x ([\chi_n - \chi_m]v)}\|_{\ell^2_k} &\lesssim \max\{1/n, 1/m\}^{l_0}\| u\|_H^{2+l_0}.
\end{align*}
Since $s > N/2 +1$ and $N>1$, we can write $s = 2 + \epsilon$ and let $l_0= \epsilon.$ 
With this choice, the linear term is bounded by 
\[ 
	\max\{1/n, 1/m\}^{\epsilon}\| u\|_H^{s} \|w \|_{L^2_{qp}} \lesssim \max\{1/n, 1/m\}^{\epsilon} \|w \|_{L^2_{qp}},
\]
up to constants that depend on $s$ and the uniform bound derived earlier.

We now deal with the nonlinear term.
By decomposition \eqref{nonlinearterm}, we need to estimate
\begin{align*}
	&\sum_k \FT{\chi_m[ (\chi_m v) (\chi_m w_x)]}(k) \FT{w}(-k) + \sum_k \FT{\chi_m[(\chi_m w)(\chi_m u)_x]}(k) \FT{w}(-k) \\
	&+ \sum_k \FT{\chi_m[(\chi_m u) ((\chi_n - \chi_m) u)_x]}(k) \FT{w}(-k) + \sum_k \FT{\chi_m[ ((\chi_n - \chi_m)u) (\chi_n u)_x]}(k) \FT{w}(-k) \\
	&+ \sum_k \FT{[\chi_n - \chi_m][\chi_n u \chi_n u_x]}(k) \FT{w}(-k) \\
	&= I + II + III + IV + V.
\end{align*}
We seek to bound each term:
\begin{enumerate}
	\item the term $I$ is handled by energy cancellation:
\begin{align*}
	\sum_k \FT{\chi_m[ (\chi_m v) (\chi_m w_x)]}(k) \FT{w}(-k) &= \sum_k \FT{(\chi_m v) (\chi_m w_x)}(k) \FT{\chi_m w}(-k) \\
	&= -\frac{1}{2} \sum_k \FT{(\chi_m v)_x \chi_m w}(k) \FT{\chi_m w}(-k),
\end{align*}
Then, apply Cauchy-Schwarz inequality, Young's convolution inequality, and Sobolev embedding as follows:
\begin{align*}
	I &\lesssim \| \FT{(\chi_m w)(\chi_m v)_x}\|_{\ell^2_k} \| \chi_m w \|_{L^2_{qp}} \\
	&\leq \|\FT{(\chi_m v)_x}\|_{\ell^1_k} \| w \|_{L^2_{qp}}^2 \\
	&\lesssim \| v \|_{H^s_{qp}} \| w \|_{L^2_{qp}}^2,
\end{align*}
so that $I$ is bounded by $\| w \|_{L^2_{qp}}^2$.
	\item the term $II$ is handled similarly to $I$: 
\begin{align*}
	\sum_k \FT{\chi_m[(\chi_m w)(\chi_m u)_x]}(k) \FT{w}(-k) &= \sum_k \FT{[(\chi_m w)(\chi_m u)_x]}(k) \FT{\chi_m w}(-k)\\
	&\leq \| \FT{(\chi_m w)(\chi_m u)_x}\|_{\ell^2_k} \| \chi_m w \|_{L^2_{qp}} \\
	&\leq \|\FT{(\chi_m u)_x}\|_{\ell^1_k } \| w \|_{L^2_{qp}}^2 \\
	&\lesssim \| u \|_{H^s_{qp}} \| w \|_{L^2_{qp}}^2,
\end{align*}	
so that $II$ is bounded by $\| w \|_{L^2_{qp}}^2$.
	\item the terms $III$ and $IV$ are handled similarly to $I$ and $II$ except that Lemma \ref{DifferenceEst} is invoked to gain decay. 
For example, let us consider $III:$
\begin{align*}
	\sum_k \FT{\chi_m[(\chi_m u) ((\chi_n - \chi_m) u)_x]}(k) \FT{w}(-k) &\leq \| \FT{(\chi_m u) ((\chi_n - \chi_m) u)_x}\|_{\ell^2_k} \| w \|_{L^2_{qp}} \\
	&\leq \| \FT{(\chi_n - \chi_m) u_x} \|_{\ell^2_k} \| \FT{\chi_m u} \|_{\ell^1_k} \| w \|_{L^2_{qp}} \\
	&\lesssim \max\{ 1/n, 1/m\}^{s-1}\| u \|_{H^s_{qp}} \| u \|_{H^{s-1}_{qp}}\| w \|_{L^2_{qp}} \\
	&\lesssim \max\{ 1/n, 1/m\}^{s-1} \| w \|_{L^2_{qp}}.
\end{align*}
In the same way, we obtain
\[ 
IV \lesssim \max\{ 1/n, 1/m\}^{s} \| w \|_{L^2_{qp}}.
\]
\item for the term $V$, apply Cauchy-Schwarz inequality and Lemma \ref{DifferenceEst}:
\begin{align*}
	\sum_k \FT{[\chi_n - \chi_m][\chi_n u \chi_n u_x]}(k) \FT{w}(-k) &\leq \| [\chi_n - \chi_m][\chi_n u \chi_n u_x] \|_{L^2_{qp}} \| w \|_{L^2_{qp}} \\
	&\lesssim \max\{ 1/n, 1/m\}^{s-1} \| \chi_n u \chi_n u_x \|_{H^{s-1}_{qp}} \| w \|_{L^2_{qp}} \\
	&\lesssim \max\{ 1/n, 1/m\}^{s-1} \| \chi_n u \|_{H^{s-1}_{qp}} \| \chi_n u_x \|_{H^{s-1}_{qp}} \| w \|_{L^2_{qp}},
\end{align*}
where we apply the algebra property in the last line.
Thus, $V \leq \max\{ 1/n, 1/m\}^{s-1} \| w \|_{L^2_{qp}}.$
\end{enumerate}
Combining these estimates with the linear part yields the following inequality:
\begin{align*}
	\frac{\D \| w \|_{L^2_{qp}}^2}{\D t} &\leq \max\{1/n, 1/m\}^{\epsilon} \|w \|_{L^2_{qp}} + \|w \|_{L^2_{qp}}^2 \\
	&+ \max\{ 1/n, 1/m\}^{s} \| w \|_{L^2_{qp}} + \max\{ 1/n, 1/m\}^{s-1} \| w \|_{L^2_{qp}}.
\end{align*}
Note that since $s = 2 + \epsilon$, we have $\epsilon = s-2$. In particular, this will be the slowest decaying factor, so that 
\[ 
	\frac{\D \| w \|_{L^2_{qp}}^2}{\D t} \lesssim \max\{1/n, 1/m\}^{s-2} \|w \|_{L^2_{qp}} + \|w \|_{L^2_{qp}}^2,
\]
which becomes 
\[ 
	\frac{\D \| w \|_{L^2_{qp}}}{\D t} \lesssim \max\{1/n, 1/m\}^{s-2} + \|w \|_{L^2_{qp}}.
\]
A standard application of the Gronwall's inequality now yields
\[ 
	\| w \|_{L^2_{qp}} \lesssim e^t ( \max\{1/n, 1/m\}^{s-2} + \|w_0 \|_{L^2_{qp}}),
\]
which completes the baseline estimate.
\end{proof}

An easy consequence of the $L^2_{qp}$ Cauchy estimate and the interpolation inequality (Lemma \ref{InterpolationInequality}) 
is that we obtain Cauchy property of solutions for $H^l_{qp}$ where $0 \leq l <s.$
In the particular case of $l = s, p = s-1$, we have
\[ 
\| w\|_{H^{s-1}_{qp}} \leq \|w\|_{H^{l}_{qp}}^{(s-1)/s} \|w\|_{L^2_{qp}}^{1-(s-1)/s} = \|w\|_{H^{l}_{qp}}^{(s-1)/s} \|w\|_{L^2_{qp}}^{1/s}
\]
Due to triangle inequality and the uniform bound on $u$ and $v$, we can ignore $\|w\|_{H^{l}_{qp}}^{(s-1)/s}.$
Now, invoking \eqref{L2CauchyEstimate} we have
\begin{align*}
	\| w\|_{H^{s-1}_{qp}} &\leq \Big( \max\{1/n, 1/m\}^{s-2} + \|(u_0)_{\delta_1} - (u_0)_{\delta_2} \|_{L^2_{qp}} \Big)^{1/s} \\
	&\leq \max\{1/n, 1/m\}^{(s-2)/s} + \|(u_0)_{\delta_1} - (u_0)_{\delta_2} \|_{L^2_{qp}}^{1/s}.
\end{align*}
By \eqref{RD2}, we have
\[
	\|(u_0)_{\delta_1} - (u_0)_{\delta_2} \|_{L^2_{qp}} \leq \|(u_0)_{\delta_1} - u_0 \|_{L^2_{qp}} + \|(u_0)_{\delta_2} - u_0 \|_{L^2_{qp}} \lesssim o((\delta_1)^{-s}) + o((\delta_2)^{-s}),
\]
so that 
\begin{align*}
	\| w\|_{H^{s-1}_{qp}} \lesssim \max\{1/n, 1/m\}^{(s-2)/s} + (o((\delta_1)^{-s}) + o((\delta_2)^{-s}))^{1/s}.
\end{align*}
Thus, we obtain
\begin{equation}\label{lowerInterp}
	\| w\|_{H^{s-1}_{qp}} \lesssim \max\{1/n, 1/m\}^{(s-2)/s} + o((\delta_1)^{-1}) + o((\delta_2)^{-1}).
\end{equation}
We now would like to prove the highest order estimate. 
It is here that we will use good properties of regularized data \eqref{regData}.

\begin{prop}\label{HsCauchyEstimate}
Let $m, n \in \RR_{+}$ and $\delta_1, \delta_2 \in \RR_{+}$. 
Assume $\delta_1 < \delta_2$ and $m >n$. 
Furthermore, assume $\delta_1 \leq n^{(s-2)/s - \epsilon},$ for some $\epsilon>0.$ 
Let $u_{n, \delta_1}, u_{m, \delta_2}$ be solutions for the respective regularized Benjamin-Ono problem which exist on the same time interval $[0,T]$.
Then, we have the following estimate
\[
\| u_{n, \delta_1} - u_{m, \delta_2} \|_{L^\infty_T H^s_{qp}} = o(1)
\]
for large enough $\delta_1$. 
In addition, the implicit constants depend only on the norm of the initial data and as such can be taken uniform as long as the initial data comes from a compact set.
\end{prop}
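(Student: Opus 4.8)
The plan is to run the energy method at the top Sobolev level on the difference $w := u_{n,\delta_1}-u_{m,\delta_2}$, writing $u=u_{n,\delta_1}$, $v=u_{m,\delta_2}$, and using the decomposition of $w_t$ furnished by \eqref{linearterm} and \eqref{nonlinearterm}. As noted before the statement, $\{(u_0)_\delta:\delta\ge0\}\cup\{u_0\}$ is compact, so Corollary \ref{UniformBound} puts $u,v$ on a common interval $[0,T]$ and all constants below depend only on $s$, $N$, $\alpha$ and $\|u_0\|_{H^s_{qp}}$ (for data in a compact set the $o(\cdot)$ rates in \eqref{RD2}--\eqref{RD3} are likewise uniform). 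Using Proposition \ref{ExchangeDerivandInt} to move the time derivative inside, I would estimate $\frac{\D}{\D t}\|D^s_x w\|_{L^2_{qp}}^2 = 2\sum_{k}\FT{D^s_x w_t}(k)\FT{D^s_x w}(-k)$ term by term. Since the hypothesis $\delta_1\le n^{(s-2)/s-\epsilon}$ has exponent in $(0,1)$, it forces $n\to\infty$ as $\delta_1\to\infty$; and for $n$ large, $u$ is supported in Fourier modes with $|\alpha\cdot k|<n$ (the regularized datum $(u_0)_{\delta_1}$ is, and the regularized flow preserves this), so $\chi_n u=\chi_m u=u$ and the terms $[\chi_n-\chi_m]Hu_{xx}$, $\chi_m[(\chi_m u)((\chi_n-\chi_m)u)_x]$, $\chi_m[((\chi_n-\chi_m)u)(\chi_n u)_x]$ all vanish. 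The remaining linear term $\chi_m[Hw_{xx}]$ pairs to zero by Lemma \ref{lineartermvanishes} applied to $\chi_m D^s_x w$. This reduces matters to two main nonlinear terms plus the single term $[\chi_n-\chi_m][uu_x]$.

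For $[\chi_n-\chi_m][uu_x]$ at the $D^s_x$ level: Lemma \ref{DifferenceEst}, a tame product estimate (a standard consequence of the argument establishing the algebra property) and the refined boundedness Proposition \ref{refinedbound} give, for every $l\ge0$, a bound $\lesssim n^{-l}\|uu_x\|_{H^{s+l}_{qp}}\lesssim n^{-l}\|u\|_{H^{s+l+1}_{qp}}\lesssim n^{-l}\delta_1^{\,l+1}\|u_0\|_{H^s_{qp}}$; since $\delta_1\le n^{(s-2)/s-\epsilon}$ with exponent $<1$, choosing $l$ large makes this $o(1)$, so this term contributes $o(1)\|D^s_x w\|_{L^2_{qp}}$. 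For each main term, $\chi_m[(\chi_m v)(\chi_m w_x)]$ and $\chi_m[(\chi_m w)(\chi_m u)_x]$, I would move $D^s_x$ inside, shift one $\chi_m$ onto the test factor $D^s_x w$, and split off the piece in which $D^s_x$ (together with the explicit $\partial_x$, where present) lands entirely on $w$: for the first this is $(\chi_m v)\partial_x(D^s_x\chi_m w)$, which by integration by parts (Lemma \ref{IntegrationByParts}) equals $-\tfrac12\int(\chi_m v)_x(D^s_x\chi_m w)^2\lesssim\|v\|_{H^s_{qp}}\|D^s_x w\|_{L^2_{qp}}^2\lesssim\|D^s_x w\|_{L^2_{qp}}^2$ by the uniform bound; for the second the analogous piece is $\int(\chi_m u)_x(D^s_x\chi_m w)^2\lesssim\|D^s_x w\|_{L^2_{qp}}^2$. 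The commutator remainder of the first term, via the fractional Leibniz rule (Proposition \ref{FractionalLeibniz} with $s_0=s-1>N/2$), is $\lesssim\|v\|_{H^s_{qp}}\|w\|_{H^s_{qp}}\lesssim\|w\|_{H^s_{qp}}$, which is harmless and only enters the Gronwall coefficient.

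The crux is the commutator remainder of the second main term, $D^s_x[(\chi_m w)(\chi_m u)_x]-(\chi_m u)_x D^s_x(\chi_m w)$. Applying Proposition \ref{FractionalLeibniz} with the non-differentiated factor taken to be $(\chi_m u)_x$ and $s_0=s-1$ gives a bound $\lesssim\|(\chi_m u)_x\|_{H^s_{qp}}\|w\|_{H^{s-1}_{qp}}=\|u\|_{H^{s+1}_{qp}}\|w\|_{H^{s-1}_{qp}}$ — i.e.\ it costs one derivative too many on $u$. To close this gap I would use the refined boundedness Proposition \ref{refinedbound}, $\|u\|_{H^{s+1}_{qp}}\lesssim\delta_1\|u_0\|_{H^s_{qp}}$, together with the lower-order Cauchy estimate \eqref{lowerInterp}, $\|w\|_{H^{s-1}_{qp}}\lesssim n^{-(s-2)/s}+o(\delta_1^{-1})+o(\delta_2^{-1})$. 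Multiplying: the hypothesis $\delta_1\le n^{(s-2)/s-\epsilon}$ gives $\delta_1\,n^{-(s-2)/s}\le n^{-\epsilon}$, while $\delta_1\,o(\delta_1^{-1})=o(1)$ and $\delta_1\,o(\delta_2^{-1})=(\delta_1/\delta_2)\,o(1)=o(1)$ since $\delta_1<\delta_2$, so $\delta_1\|w\|_{H^{s-1}_{qp}}=o(1)$ and this remainder contributes $o(1)\|D^s_x w\|_{L^2_{qp}}$. This one term — the price of the extra derivative, controllable only by playing the gain from \eqref{lowerInterp} against the loss in Proposition \ref{refinedbound} — is the main obstacle, and it is exactly what dictates the shape of the hypothesis linking $\delta_1$ to $n$.

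Collecting the bounds, with $\eta=\eta(\delta_1)=o(1)$ a common bound for the small quantities above, $\frac{\D}{\D t}\|D^s_x w\|_{L^2_{qp}}^2\lesssim\|w\|_{L^2_{qp}}^2+\|D^s_x w\|_{L^2_{qp}}^2+\eta\,\|D^s_x w\|_{L^2_{qp}}$; the $L^2_{qp}$ Cauchy estimate \eqref{L2CauchyEstimate} with \eqref{RD2} gives $\|w\|_{L^2_{qp}}=o(1)$, and absorbing $\eta\,\|D^s_x w\|_{L^2_{qp}}\le\tfrac12\eta^2+\tfrac12\|D^s_x w\|_{L^2_{qp}}^2$ and applying Gronwall on $[0,T]$ yields $\|D^s_x w(t)\|_{L^2_{qp}}^2\lesssim_T\|D^s_x w(0)\|_{L^2_{qp}}^2+\eta^2$. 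Finally $\|D^s_x w(0)\|_{L^2_{qp}}=\|D^s_x((u_0)_{\delta_1}-(u_0)_{\delta_2})\|_{L^2_{qp}}\le\|(u_0)_{\delta_1}-u_0\|_{H^s_{qp}}+\|(u_0)_{\delta_2}-u_0\|_{H^s_{qp}}=o(1)$ by \eqref{RD3}, so $\|D^s_x w\|_{L^\infty_T L^2_{qp}}=o(1)$, and since $\|w\|_{H^s_{qp}}\lesssim\|w\|_{L^2_{qp}}+\|D^s_x w\|_{L^2_{qp}}$ we conclude $\|u_{n,\delta_1}-u_{m,\delta_2}\|_{L^\infty_T H^s_{qp}}=o(1)$ as $\delta_1\to\infty$, with constants and rates depending only on $\|u_0\|_{H^s_{qp}}$ (hence uniform over compact sets of initial data).
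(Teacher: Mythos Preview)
Your proof is correct and follows the same energy-method template as the paper, including the decisive step: bounding the commutator arising from term $II$ by $\|u\|_{H^{s+1}_{qp}}\|w\|_{H^{s-1}_{qp}}\lesssim\delta_1\|w\|_{H^{s-1}_{qp}}$ and then closing with Proposition \ref{refinedbound} and \eqref{lowerInterp} via the hypothesis $\delta_1\le n^{(s-2)/s-\epsilon}$. Two departures are worth flagging. First, your Fourier-support observation --- that $(u_0)_{\delta_1}$ lives on $\{|k|\le\delta_1\}\subset\{|\alpha\cdot k|<n\}$ once $|\alpha|\,n^{(s-2)/s-\epsilon}<n$, and that the $\chi_n$-regularized flow preserves this band --- is valid and genuinely streamlines things: it kills $[\chi_n-\chi_m]Hu_{xx}$ and the paper's terms $III$, $IV$ outright, whereas the paper estimates each of these via Lemma \ref{DifferenceEst} and Proposition \ref{refinedbound}. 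Second, for the surviving term $[\chi_n-\chi_m][uu_x]$ you invoke a tame product bound $\|uu_x\|_{H^{s+l}_{qp}}\lesssim\|u\|_{H^s_{qp}}\|u\|_{H^{s+l+1}_{qp}}$ (not stated in the paper, though indeed a routine refinement of the algebra-property argument) to obtain $n^{-l}\delta_1^{\,l+1}=n^{\,l(\theta-1)+\theta}$ with $\theta=(s-2)/s-\epsilon<1$, which is $o(1)$ for \emph{any} $\epsilon>0$ upon sending $l\to\infty$. The paper instead uses the plain algebra property, getting $n^{-1}\delta_1^{3}$; it is precisely this cruder bound that forces the additional restriction $\epsilon>\tfrac{2}{3}\tfrac{s-2}{s}$ fixed at the start of the paper's proof. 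So your route is a legitimate simplification that, as a bonus, decouples the admissible range of $\epsilon$ from this particular term.
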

\begin{proof}[Proof of Proposition \ref{HsCauchyEstimate}]
As before, we consider the time derivative of $\| D^s_x (u_{n,\delta_1} -u_{m,\delta_2})\|_{L^2_{qp}}^2.$
For ease of notation, write $u = u_{n, \delta_1}, v = u_{m,\delta_2}, w = u-v,$ so that we wish to estimate 
\begin{align*}
	\frac{\D}{\D t} \| D^s_x w \|_{L^2_{qp}}^2 &= 2 \sum_{k \in \ZZ^N} \FT{D^s_x w}(k) \FT D^s_x w(-k) \\
	&= 2 \sum_{k \in \ZZ^N}\FT{D^s_x (\chi_n[ (\chi_n u) (\chi_n u_x)] - \chi_m[ (\chi_m v) (\chi_m v_x)])} \FT D^s_x w(-k) \\
	&+\quad 2 \sum_{k \in \ZZ^N}\FT{D^s_x (\chi_n[ H u_{xx}] - \chi_m[Hv_{xx}])} \FT D^s_x w(-k).
\end{align*}
First, we pick the $\epsilon>0$. Note that since 
\[ 
	s > N/2 + 1 \geq 2/2 + 1 = 2,
\]
we know that there is some $\epsilon > 0$ such that
\[ 
	\frac{s-2}{s} > \epsilon > \frac{2}{3}\frac{s-2}{s}.
\]
Note that with this choice $\epsilon$, we necessarily have 
\[
	(s-2)/s - \epsilon >0 \qquad \text{and} \qquad 2(s-2)/s - 3\epsilon < 0.
\]
For convenience, we write
\begin{equation}\label{cond1}
	\delta_1 \leq n^{(s-2)/s - \epsilon}, \quad (s-2)/s - \epsilon >0, \quad 2(s-2)/s - 3\epsilon < 0,
\end{equation}
so that we can refer to these conditions easily.
Furthermore, note that 
\begin{equation}\label{cond2}
	\delta_1 n^{-1} \leq n^{(s-2)/s- \epsilon} n^{-1} = n^{(-s+s -2)/s - \epsilon} =n^{-2/s - \epsilon}.
\end{equation}
Now, we consider the linear part.
Due to \eqref{linearterm}, we have 
\begin{align*}
	\sum_{k \in \ZZ^N} &\FT{D^s_x (\chi_n[ H u_{xx}] - \chi_m[Hv_{xx}])} \FT D^s_x w(-k) \\
	&= \underbrace{\sum_{k \in \ZZ^N}\FT{D^s_x (\chi_m[ H w_{xx}])} \FT D^s_x w(-k)}_\text{$=0$ by Lemma \eqref{lineartermvanishes}} 
	+ \sum_{k \in \ZZ^N}\FT{D^s_x ([\chi_n - \chi_m]Hu_{xx})} \FT D^s_x w(-k).
\end{align*}
By applying Cauchy-Schwarz inequality, it is enough to estimate 
\[ 
\| \FT{D^{s+2}_x ([\chi_n - \chi_m]u)}\|_{\ell^2_k}.
\]
Due to the presence of $\chi_n - \chi_m$, we first apply Lemma \eqref{DifferenceEst} and then use Lemma \eqref{refinedbound}:
\begin{align*}
	\| \FT{D^{s+2}_x ([\chi_n - \chi_m]u)}\|_{\ell^2_k}
		&\lesssim \max\{1/n, 1/m\}^{l_1} \| u \|_{H^{s+{l_1}+2}_{qp}}\\
		&\lesssim \max\{1/n, 1/m\}^{l_1} (\delta_1)^{{l_1} + 2}\| u\|_{H^{s}_{qp}} \\
		&= n^{-l_1}(\delta_1)^{{l_1} + 2}\| u\|_{H^{s}_{qp}}.
\end{align*}
Thus, the linear term is bounded by 
\[
	n^{-l_1}(\delta_1)^{{l_1} + 2} \| D^s_x w \|_{L^2_{qp}},
\]
where the constants depend on $s$ and the uniform bound on $v$. Letting $l_1 = 1$ lets us bound the linear term by
\[ 
n^{-1} \delta_1^{3}\| D^s_x w\|_{L^2_{qp}}.
\]

We proceed with the nonlinear term. 
Using the decomposition in \eqref{nonlinearterm}, we have
\begin{align*}
	\sum_{k \in \ZZ^N} &\FT{D^s_x (\chi_n[ (\chi_n u) (\chi_n u_x)] - \chi_m[ (\chi_m v) (\chi_m v_x)])}(k) \FT D^s_x w(-k) \\
	&= \sum_{k \in \ZZ^N}\FT{D^s_x \chi_m[ (\chi_m v) (\chi_m w_x)]}(k) \FT D^s_x w(-k)
	+ \sum_{k \in \ZZ^N}\FT{D^s_x \chi_m[(\chi_m w)(\chi_m u)_x]}(k) \FT D^s_x w(-k) \\
	&+ \sum_{k \in \ZZ^N}\FT{D^s_x \chi_m[(\chi_m u) ((\chi_n - \chi_m) u)_x]}(k) \FT D^s_x w(-k) \\
	&+ \sum_{k \in \ZZ^N}\FT{D^s_x \chi_m[ ((\chi_n - \chi_m)u) (\chi_n u)_x]}(k) \FT D^s_x w(-k)
	+ \sum_{k \in \ZZ^N}\FT{D^s_x [\chi_n - \chi_m][\chi_n u \chi_n u_x]}(k) \FT D^s_x w(-k)\\
	&= I + II + III + IV + V.
\end{align*}
We seek to bound each term. 
The common strategy in all calculations is to add and subtract certain terms to use the fractional Leibniz rule (Proposition \ref{FractionalLeibniz}).
Note:
\begin{enumerate}
	\item the term $I$ has the same structure as the nonlinearity in the regularized Benjamin-Ono equation, so first write
\begin{align*}
	\sum_{k \in \ZZ^N}&\FT{D^s_x \chi_m[ (\chi_m v) (\chi_m w_x)]}(k) \FT D^s_x w(-k)  \\
	&=\sum_{k \in \ZZ^N}\FT{D^s_x [ (\chi_m v)(\chi_m w_x)]}(k) \FT D^s_x \chi_m w(-k) \\
	&= \sum_{k \in \ZZ^N}\FT{D^s_x [ (\chi_m v) (\chi_m w_x)] - (\chi_m v) D^s_x (\chi_m w_x)}(k) \FT D^s_x \chi_m w(-k) \\
	&\quad + \sum_{k \in \ZZ^N}\FT{ (\chi_m v) D^s_x (\chi_m w_x)}(k) \FT D^s_x \chi_m w(-k).
\end{align*}
Application of the fractional Leibniz rule and energy cancellation argument yield that both terms are bounded by
$\| \chi_m v \|_{H^s_{qp}} \| \chi_m w \|_{H^{s}_{qp}} \| D^s_x \chi_m w \|_{L^2_{qp}}.$
By uniform bound, we see that $I$ is bounded by 
\begin{align*}
	\| w \|_{H^{s}_{qp}} \| D^s_x w \|_{L^2_{qp}} &\lesssim (\| D^s_x w_0 \|_{L^2_{qp}} +  \| D^s_x w \|_{L^2_{qp}}) \| D^s_x w \|_{L^2_{qp}} \\
	&\leq \| D^s_x  w_0 \|_{L^2_{qp}}\| D^s_x w \|_{L^2_{qp}} +  \| D^s_x w \|_{L^2_{qp}}^2.
\end{align*}
Since $\| D^s_x w_0 \|_{L^2_{qp}} = o(1)$ for $\delta_1$ large enough, we have 
\[ 
	I \lesssim o(1) \| D^s_x w \|_{L^2_{qp}} +  \| D^s_x w \|_{L^2_{qp}}^2.
\]
	\item the term $II$ is dealt with similarly to $I:$
\begin{align*}
	\sum_{k \in \ZZ^N}&\FT{D^s_x \chi_m[(\chi_m w)(\chi_m u)_x]}(k) \FT D^s_x w(-k) \\
	&= \sum_{k \in \ZZ^N}\FT{D^s_x [(\chi_m w)(\chi_m u)_x]}(k) \FT D^s_x \chi_m w(-k) \\
	&= \sum_{k \in \ZZ^N}\FT{D^s_x [(\chi_m w)(\chi_m u)_x] - \chi_m w D^s_x (\chi_m u)_x}(k) \FT D^s_x \chi_m w(-k) \\
	&\qquad + \sum_{k \in \ZZ^N}\FT{ (\chi_m w)D^s_x (\chi_m u)_x}(k) \FT D^s_x \chi_m w(-k).
\end{align*}
The first sum is bounded by 
\[ 
\| \chi_m w \|_{H^s_{qp}} \| (\chi_m v)_x\|_{H^{s-1}_{qp}} \leq  \| w \|_{H^s_{qp}} \| v \|_{H^{s}_{qp}} \lesssim \| w \|_{H^s_{qp}} \leq \| D^s_{x} w\|_{L^2_{qp}} +  \| D^s_{x} w_0\|_{L^2_{qp}}.
\]
For the second sum, we bound by 
\begin{align*}
	\| \FT{ (\chi_m w)D^s_x (\chi_m u)_x} \|_{\ell^2_k} \| D^s_x \chi_m w\|_{L^2_{qp}} &\lesssim \| \chi_m w \|_{H^{s-1}_{qp}} \| D^s_x (\chi_m u)_x \|_{L^2_{qp}}\| D^s_x \chi_m w\|_{L^2_{qp}} \\
	&\lesssim \delta_1 \| w \|_{H^{s-1}_{qp}} \| D^s_x u \|_{L^2_{qp}} \| D^s_x w\|_{L^2_{qp}}\\
	&\lesssim \delta_1 \| w \|_{H^{s-1}_{qp}} \| D^s_x w\|_{L^2_{qp}}
\end{align*}
where we used Lemma \ref{regData}.
We then use \eqref{lowerInterp} to gain decay from $\delta_1 \| w \|_{H^{s-1}_{qp}}$:
\begin{align*}
	\delta_1 \| w \|_{H^{s-1}_{qp}} &\lesssim \delta_1 \max\{1/n, 1/m\}^{(s-2)/s} + \delta_1 o((\delta_1)^{-1}) + \delta_1 o((\delta_2)^{-1}) \\
	&= \delta_1 n^{(2-s)/s} + \delta_1 o((\delta_2)^{-1}) +  o(1),
\end{align*}
for $\delta_1$ large enough.
By assumption, $\delta_1 < \delta_2,$ so that 
\[
	\delta_1 \| w \|_{H^{s-1}_{qp}} \lesssim \delta_1 n^{(2-s)/s}  + o(1).
\]
Therefore, $II$ is bounded by 
\[
	(\delta_1 n^{(2-s)/s} + o(1))\| D^s_x w\|_{L^2_{qp}}
\]
Using \eqref{cond1} we have 
\[
	\delta_1 n^{(2-s)/s} \leq n^{(s-2)/s - \epsilon} n^{(2-s)/s} = n^{-\epsilon},
\]
so that $II$ indeed decays:
\[
	(n^{-\epsilon} + o(1)) \| D^s_x w\|_{L^2_{qp}}.
\]
	\item for the term $III$ as before we write 
\begin{align*}
	\sum_{k \in \ZZ^N} &\FT{D^s_x \chi_m[(\chi_m u) ((\chi_n - \chi_m) u)_x]}(k) \FT D^s_x w(-k) \\
	&= \sum_{k \in \ZZ^N} \FT{D^s_x [(\chi_m u) ((\chi_n - \chi_m) u)_x]}(k) \FT D^s_x\chi_m w(-k)\\
		&= \sum_{k \in \ZZ^N} \FT{D^s_x [(\chi_m u) ((\chi_n - \chi_m) u)_x] -(\chi_m u) D^s_x [ ((\chi_n - \chi_m) u)_x]}(k) \FT D^s_x \chi_m w(-k)\\
		&\qquad + \sum_{k \in \ZZ^N} \FT{ \chi_m u  D^s_x (\chi_n - \chi_m) u_x}(k) \FT D^s_x \chi_m w(-k).
\end{align*}
The first sum is bounded by
\[
\| \chi_m u \|_{H^s_{qp}} \| (\chi_n - \chi_m) u)_x \|_{H^{s-1}_{qp}} \| D^s_x \chi_m w\|_{L^2_{qp}} \lesssim n^{-l_2} (\delta_1)^{l_2}\| D^s_x \chi_m w\|_{L^2_{qp}},
\]
where we invoked Lemma \ref{DifferenceEst} and Lemma \ref{refinedbound}. 
For the second sum, we bound similarly:
\begin{align*}
	\| \FT{ \chi_m u  D^s_x (\chi_n - \chi_m) u_x} \|_{\ell^2_k} \| D^s_x w \|_{L^2_{qp}} &\lesssim \| \FT{ \chi_m u} \|_{\ell^1_k} \| \FT{D^s_x (\chi_n - \chi_m) u_x} \|_{\ell^2_k}\| D^s_x w \|_{L^2_{qp}} \\
	&\lesssim \max\{1/n, 1/m\}^{l_3} (\delta_1)^{l_3+1},
\end{align*}
for $l_3>0$.
Letting $l_2 = l_3 = 1$, we find the term $IV$ is bounded above by 
\[ \Big( n^{-1} \delta_1 + n^{-1} \delta_1^{2} \Big)\| D^s_x w\|_{L^2_{qp}}. \]
\item the term $IV$ is very similar to $III.$ Decomposing $IV$ as 
\begin{align*}
	\sum_{k \in \ZZ^N}&\FT{D^s_x \chi_m[ ((\chi_n - \chi_m)u) (\chi_n u)_x]}(k) \FT D^s_x w(-k) \\
	&= \sum_{k \in \ZZ^N}\FT{D^s_x [ ((\chi_n - \chi_m)u) (\chi_n u)_x]}(k) \FT D^s_x \chi_m w(-k) \\
	&= \sum_{k \in \ZZ^N}\FT{D^s_x [ ((\chi_n - \chi_m)u) (\chi_n u)_x] -  ((\chi_n - \chi_m)u) D^s_x (\chi_m w)_x }(k) \FT D^s_x \chi_m w(-k) \\
	&\qquad + \sum_{k \in \ZZ^N}\FT{ ((\chi_n - \chi_m)u) D^s_x (\chi_n u)_x }(k) \FT D^s_x \chi_m w(-k),
\end{align*}
we find that it is bounded by
\[ \Big( \delta_1 \max\{1/n, 1/m\}^{l_4} +\max\{1/n, 1/m\}^{l_5} (\delta_1)^{l_5} \Big)\| D^s_x w\|_{L^2_{qp}}, \]
with $l_4 \leq 1$.
Again, let $l_4= l_5 = 1$ so that $V$ has the bound
\[ 
n^{-1} \delta_2 \| D^s_x w\|_{L^2_{qp}}.
\]
\item for the term $V$, first apply Cauchy-Schwarz inequality and Lemma \ref{DifferenceEst} to obtain
\[ 
	\max\{1/n, 1/m\}^{l_6} \| D^{s+l_6} (\chi_n u \chi_n u_x) \|_{L^2_{qp}} \| D^s_x w \|_{L^2_{qp}}.
\]
This is bounded by
\[
	\max\{1/n, 1/m\}^{l_6} \| \chi_n u \chi_n u_x \|_{H^{s+l_6}_{qp}} \| D^s_x w \|_{L^2_{qp}}.
\]
Note that by applying the algebra property, Lemma \ref{refinedbound}, and the uniform bound, we get
\[
	\| \chi_n u \chi_n u_x \|_{H^{s+l_6}_{qp}} \leq \| \chi_n u\|_{H^{s+l_6}_{qp}} \| \chi_n  u_x \|_{H^{s+l_6+ 1}_{qp}} \lesssim \delta_1^{2 l_6 + 1} 
\]
Finally, let $l_6 = 1$, so that the bound is 
\[ 
I \lesssim n^{-1} \delta_1^{3} \| D^s_x w \|_{L^2_{qp}},
\]
as needed.
\end{enumerate}
Combining the estimates for nonlinear and linear terms, we obtain the following bound:
\begin{align*}
	&\Big( n^{-1} \delta_1 + n^{-1} \delta_1^{2} + n^{-1} \delta_1^{3} \Big) \| D^s_x w\|_{L^2_{qp}} + (n^{-\epsilon} + o(1)) \| D^s_x w\|_{L^2_{qp}} + \| D^s_x w \|_{L^2_{qp}}^{2}.
\end{align*}
Note that since $\delta_1$ is big enough, it is enough to estimate $n^{-1} \delta_1^{3}$. 

Using \eqref{cond1} and \eqref{cond2}, we obtain
\begin{align*}
	n^{-1}\delta_1^3 &\leq n^{-2/s - \epsilon} \delta_1^{2} \\
	&\leq n^{-2/s - \epsilon} n^{2((s-2)/s - \epsilon)}.
\end{align*}
Combining the exponents yields the power
\begin{align*}
	-2/s - \epsilon + 2((s-2)/s - \epsilon) &= -2/s + 2 (s-2)/s - 3\epsilon \\
	&= (-2 + 2s - 4)/s - 3\epsilon \\
	&= 2(s-3)/s- 3\epsilon.
\end{align*}
Here, note that $\frac{2(s-3)}{s} <  \frac{2(s-2)}{s},$
so that
\[
	2(s-3)/s- 3\epsilon < \frac{2(s-2)}{s} - 3 \epsilon < 0
\]
by choice of $\epsilon$.
Thus, we have
\[ 
	n^{-1}\delta_1^3 \| D^s_x w \|_{L^2_{qp}} \leq n^{\frac{2(s-2)}{s} - 3 \epsilon} \| D^s_x w \|_{L^2_{qp}}.
\]
Therefore, we obtain 
\begin{align*}
	\frac{\D}{\D t} \| D^s_x w \|_{L^2_{qp}}^2 \lesssim \Big( n^{\frac{2(s-2)}{s} - 3 \epsilon} + n^{-\epsilon} + o(1) \Big) \| D^s_x w \|_{L^2_{qp}} + \| D^s_x w \|_{L^2_{qp}}^2,
\end{align*}
so that 
\[
	\frac{\D}{\D t} \| D^s_x w \|_{L^2_{qp}} \lesssim \Big( n^{\frac{2(s-2)}{s} - 3 \epsilon} + n^{-\epsilon} + o(1) \Big) + \| D^s_x w \|_{L^2_{qp}}.
\]
Finally, a Gronwall type argument reveals that 
\begin{align*}
	\| D^s_x w \|_{L^2_{qp}} &\lesssim e^t ( n^{\frac{2(s-2)}{s} - 3 \epsilon} + n^{-\epsilon} + o(1) ) + \| D^s_x w_0 \|_{L^2_{qp}} \\
	&\lesssim n^{\frac{2(s-2)}{s} - 3 \epsilon} + n^{-\epsilon} + o(1),
\end{align*} 
since $\| D^s_x w_0 \|_{L^2_{qp}} = o(1)$ for $\delta_1$ large enough.
Furthermore, since $\delta_2 < n^{(s-2)/s - \epsilon},$ we know that 
\[n^{\frac{2(s-2)}{s} - 3 \epsilon} + n^{-\epsilon} \leq \delta_1^{-l}\] 
for some $l > 0$. 
Thus, we obtain $\| D^s_x w \|_{L^2_{qp}} = o(1).$
The same analysis applied to $\| w \|_{L^2_{qp}}$ yields $\| w \|_{L^2_{qp}} = o(1)$, 
so that we obtain the desired result.
\end{proof}

Note that now that we have the Cauchy bound, we can also perform Cauchy-like estimates on the linear and nonlinear terms. 
This will be useful in the next section.
\begin{cor}\label{CauchyLikeBound}
Suppose the same assumptions hold as in Proposition \ref{HsCauchyEstimate}. 
First, the following bound holds on the difference of nonlinearities:
\begin{equation}\label{nonlinearCauchyBound}
	\| \chi_n[ (\chi_n u) (\chi_n u_x)] - \chi_m[ (\chi_m v) (\chi_m v_x)] \|_{H^{s-1 -  \gamma_1}_{qp}} \lesssim o(1) + \max\{1/n, 1/m\}^{\gamma_1},
\end{equation}
where $\gamma_1>0$ is small enough so that $s - 1 - \gamma_1 > N/2.$
Second, the following bound on holds on the difference of linear terms:
\begin{equation}\label{linearCauchyBound}
	\| \chi_n Hu_{xx} - \chi_m Hv_{xx} \|_{H^{s - 2 \gamma_2}_{qp}} \lesssim o(1) + \max\{1/n, 1/m\}^{\gamma_2},
\end{equation}
where $\gamma_2 > 0$ is small enough.
Furthermore, both \eqref{nonlinearCauchyBound} and \eqref{linearCauchyBound} hold uniformly on $[0,T]$.
\end{cor}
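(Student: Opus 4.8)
The plan is to substitute the decompositions \eqref{linearterm} and \eqref{nonlinearterm} and estimate each resulting summand, using the $H^s_{qp}$ Cauchy bound $\|u_{n,\delta_1}-u_{m,\delta_2}\|_{L^\infty_T H^s_{qp}}=o(1)$ of Proposition \ref{HsCauchyEstimate} (and, where a decay rate is convenient, the interpolated bound \eqref{lowerInterp}) for the pieces built out of $w=u-v$ alone, and Lemma \ref{DifferenceEst} together with the refined boundedness $\|u_{n,\delta_1}\|_{H^{s+p}_{qp}}\lesssim\delta_1^{p}$ of Proposition \ref{refinedbound} for the pieces carrying a factor $\chi_n-\chi_m$. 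The crucial structural point is that in both \eqref{linearterm} and \eqref{nonlinearterm} every factor $\chi_n-\chi_m$ is applied to $u=u_{n,\delta_1}$ and never to $v$; hence only $\delta_1$ and $n$ enter those estimates, so the hypothesis $\delta_1\leq n^{(s-2)/s-\epsilon}$ is exactly the lever we need, while $\delta_2$ and $m$ appear only through quantities already under control (the uniform bound and the Cauchy bounds).

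For the nonlinear estimate \eqref{nonlinearCauchyBound} I would expand through \eqref{nonlinearterm} into the five terms displayed there. For $\chi_m[(\chi_m v)(\chi_m w_x)]$ and $\chi_m[(\chi_m w)(\chi_m u)_x]$ I would drop the outer $\chi_m$ (a bounded projection), split the product with the algebra property in $H^{s-1-\gamma_1}_{qp}$ — admissible because $s-1-\gamma_1>N/2$ — bound the $u$- or $v$-factor by the uniform bound (Corollary \ref{UniformBound}) and the $w$-factor by $\|w\|_{H^s_{qp}}=o(1)$ (or by $\|w\|_{H^{s-1}_{qp}}$ via \eqref{lowerInterp}); this produces the $o(1)$ contribution. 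For the three terms containing $\chi_n-\chi_m$, I would commute $D^{s-1-\gamma_1}_x$ past $\chi_n-\chi_m$, apply Lemma \ref{DifferenceEst} with a parameter $l$ to trade $\chi_n-\chi_m$ for a gain $\max\{1/n,1/m\}^{l}=n^{-l}$ at the price of $l$ extra derivatives, distribute those derivatives with the algebra property, and absorb them using Proposition \ref{refinedbound}. Each such term is then bounded by $n^{-l}\delta_1^{p(l)}\leq n^{-l}n^{((s-2)/s-\epsilon)p(l)}$ for an explicit affine $p(l)$ (at worst $p(l)=2l-1-2\gamma_1$, for the fully quadratic piece $[\chi_n-\chi_m][\chi_n u\,\chi_n u_x]$); by \eqref{cond1}–\eqref{cond2} the resulting exponent of $n$ is negative, and for $l$ taken large enough relative to $\gamma_1$ it is $\leq-\gamma_1$, yielding the $\max\{1/n,1/m\}^{\gamma_1}$ contribution.

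For the linear estimate \eqref{linearCauchyBound} I would use \eqref{linearterm}: $\chi_n Hu_{xx}-\chi_m Hv_{xx}=\chi_m[Hw_{xx}]+[\chi_n-\chi_m]Hu_{xx}$. The first piece admits no energy cancellation once we take its norm, so I would bound it directly: $H\partial_x^2$ is a Fourier multiplier of order two, hence $\|\chi_m[Hw_{xx}]\|_{H^{r}_{qp}}\lesssim\|w\|_{H^{r+2}_{qp}}$ for any $r$, and taking $r$ to be the index of \eqref{linearCauchyBound} — which must sit below $s-2$, so that the amplification $(\alpha\cdot k)^2$ is compensated by $\jp{k}^{-2}$ and no power of $m$ survives — gives $r+2\leq s$ and thus a bound by $\|w\|_{H^s_{qp}}=o(1)$. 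The second piece, $[\chi_n-\chi_m]Hu_{xx}$, is handled exactly as the $\chi_n-\chi_m$ terms above: Lemma \ref{DifferenceEst} followed by Proposition \ref{refinedbound} gives a bound $n^{-l}\delta_1^{p(l)}$ that, under \eqref{cond1}, is $\leq\max\{1/n,1/m\}^{\gamma_2}$ for a suitable $l$. Finally, every ingredient invoked — the uniform bound (Corollary \ref{UniformBound}), the refined bound (Proposition \ref{refinedbound}), the data-regularization rates \eqref{RD2}–\eqref{RD3}, and the Cauchy bounds of Proposition \ref{HsCauchyEstimate} and \eqref{lowerInterp} — holds uniformly on $[0,T]$ and with constants depending only on a compact set containing the initial data, so \eqref{nonlinearCauchyBound}–\eqref{linearCauchyBound} inherit both forms of uniformity.

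The step I expect to be the main obstacle is the exponent bookkeeping for the $\chi_n-\chi_m$ terms: the gain $n^{-l}$ from Lemma \ref{DifferenceEst} must beat the loss $\delta_1^{p(l)}\sim n^{((s-2)/s-\epsilon)p(l)}$ from Proposition \ref{refinedbound} for \emph{every} admissible $n$, not merely asymptotically. Since $p(l)$ can grow linearly in $l$ with slope up to $2$ (a product of two functions, each demanding $\sim l$ derivatives), this forces $2((s-2)/s-\epsilon)<1$; one checks that the conditions on $\epsilon$ carried over from Proposition \ref{HsCauchyEstimate} — in particular $\epsilon>\tfrac23\cdot\tfrac{s-2}{s}$, which yields $(s-2)/s-\epsilon<\tfrac13\cdot\tfrac{s-2}{s}<\tfrac12$ — are precisely what make this go through, after which one only has to take $l$ large enough to push the surviving power below $-\gamma_1$ (respectively $-\gamma_2$).
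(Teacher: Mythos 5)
Your proposal is correct, but for the terms carrying a factor $\chi_n-\chi_m$ it takes a genuinely heavier route than the paper. Both proofs start from the decompositions \eqref{linearterm} and \eqref{nonlinearterm} and treat the pieces built from $w$ alone identically (algebra property in $H^{s-1-\gamma_1}_{qp}$, uniform bound on the $u$- or $v$-factor, $\|w\|_{H^s_{qp}}=o(1)$ from Proposition \ref{HsCauchyEstimate} for the other). The difference is in the $\chi_n-\chi_m$ pieces: the paper applies Lemma \ref{DifferenceEst} with $l=\gamma_1$ (resp.\ $l=\gamma_2$), so the derivative loss is \emph{exactly} the amount by which the corollary's norm was weakened relative to $H^{s-1}_{qp}$ (resp.\ $H^{s-2}_{qp}$); the resulting factor is $\max\{1/n,1/m\}^{\gamma_1}\|u\|_{H^{s}_{qp}}$ or $\max\{1/n,1/m\}^{\gamma_1}\|u\|_{H^{s}_{qp}}^{2}$, which closes with the uniform bound alone --- no Proposition \ref{refinedbound}, no power of $\delta_1$, no exponent bookkeeping, and no use of the coupling $\delta_1\leq n^{(s-2)/s-\epsilon}$ for these terms. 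That is precisely why the corollary is stated in the lowered regularities $H^{s-1-\gamma_1}_{qp}$ and $H^{s-2-\gamma_2}_{qp}$ (the exponent $s-2\gamma_2$ in the displayed statement is a typo for $s-2-\gamma_2$, as your treatment of the linear term implicitly recognizes). Your alternative --- take $l$ large in Lemma \ref{DifferenceEst}, absorb the extra derivatives with Proposition \ref{refinedbound} at cost $\delta_1^{p(l)}$, and beat it using $\delta_1\leq n^{(s-2)/s-\epsilon}$ --- does go through: your worst case $p(l)=2l-1-2\gamma_1$ gives a net exponent $l(2a-1)-a(1+2\gamma_1)$ with $a=(s-2)/s-\epsilon<1/3$, so it can be pushed below $-\gamma_1$. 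What you lose is economy and generality: your argument makes the corollary appear to hinge on the $\delta_1$--$n$ coupling and on the refined bound where only the uniform $H^s_{qp}$ bound is actually required, and the "main obstacle" you flag dissolves entirely in the paper's version.
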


\begin{proof}
We only sketch the proof. 
For the difference of nonlinear terms, use the decomposition \eqref{nonlinearterm} to obtain the following:
\begin{align*}
	&\| \chi_n[ (\chi_n u) (\chi_n u_x)] - \chi_m[ (\chi_m v) (\chi_m v_x)] \|_{H^{s-1 -  \gamma_1}_{qp}} \\
	&\quad \leq \| \chi_m[ (\chi_m v) (\chi_m w_x)]\|_{H^{s-1 -  \gamma_1}_{qp}} + \|\chi_m[(\chi_m w)(\chi_m u)_x] \|_{H^{s-1 -  \gamma_1}_{qp}} \\
	&\quad+ \|\chi_m[(\chi_m u) ((\chi_n - \chi_m) u)_x]\|_{H^{s-1 -  \gamma_1}_{qp}} + \| \chi_m[ ((\chi_n - \chi_m)u) (\chi_n u)_x]\|_{H^{s-1 -  \gamma_1}_{qp}} \\
	&\quad + \|[\chi_n - \chi_m][\chi_n u \chi_n u_x] \|_{H^{s-1 -  \gamma_1}_{qp}}.
\end{align*}
As $s - 1 -\gamma_1 > N/2$, we can apply the algebra property on the first four terms.
For the fifth term, first apply Lemma \ref{DifferenceEst} and then the algebra property.
These operations altogether let us bound the above by
\begin{align*}
	&\| \chi_n[ (\chi_n u) (\chi_n u_x)] - \chi_m[ (\chi_m v) (\chi_m v_x)] \|_{H^{s-1 -  \gamma_1}_{qp}}\\
	&\quad \leq \| v \|_{H^{s-1 -  \gamma_1}_{qp}} \|w_x\|_{H^{s-1 -  \gamma_1}_{qp}} 
	+ \| w \|_{H^{s-1 -  \gamma_1}_{qp}} \|u_x \|_{H^{s-1 -  \gamma_1}_{qp}} \\
	&\qquad+ \| u \|_{H^{s-1 -  \gamma_1}_{qp}}\| (\chi_n-\chi_m)u_x \|_{H^{s-1 -  \gamma_1}_{qp}}
	+ \| (\chi_n - \chi_m)u\|_{H^{s-1 -  \gamma_1}_{qp}} \|u_x \|_{H^{s-1 -  \gamma_1}_{qp}} \\
	&\qquad+ \max\{ 1/n, 1/m\}^{\gamma_1} \| u \|_{H^{s-1 -  \gamma_1}_{qp}} \| u_x\|_{H^{s-1 -  \gamma_1}_{qp}}.
\end{align*}
Simplifying and applying Lemma \ref{DifferenceEst} on the third and fourth term yields the bound
\begin{align*}
	&\| \chi_n[ (\chi_n u) (\chi_n u_x)] - \chi_m[ (\chi_m v) (\chi_m v_x)] \|_{H^{s-1 -  \gamma_1}_{qp}}\\
	&\qquad \leq \| v \|_{H^{s}_{qp}} \|w\|_{H^{s}_{qp}} 
	+ \| w \|_{H^{s}_{qp}} \|u \|_{H^{s}_{qp}} 
	+ \max\{ 1/n, 1/m\}^{\gamma_1} \| u \|_{H^{s}_{qp}}^2.
\end{align*}
Applying the uniform bound and the Cauchy estimate yields the bound
\[ 
	\| \chi_n[ (\chi_n u) (\chi_n u_x)] - \chi_m[ (\chi_m v) (\chi_m v_x)] \|_{H^{s-1 -  \gamma_1}_{qp}} \lesssim o(1) + \max\{ 1/n, 1/m\}^{\gamma_1}.
\]
For the difference of linear terms, similarly use the decomposition \eqref{linearterm}.
As such, applying Lemma \ref{DifferenceEst}, we obtain:
\begin{align*}
	\| \chi_n Hu_{xx} - \chi_m Hv_{xx} \|_{H^{s - 2 -\gamma_2}_{qp}} &\leq \| (\chi_n - \chi_m)u_{xx} \|_{H^{s - 2 \gamma_2}_{qp}} + \| \chi_m Hw_{xx} \|_{H^{s - 2 - \gamma_2}_{qp}}  \\
	&\lesssim \max\{ 1/n, 1/m\}^{\gamma_2} \| u_{xx} \|_{H^{s-2}_{qp}} + \| w \|_{H^{s-\gamma_2}_{qp}}  \\
	&\lesssim \max\{ 1/n, 1/m\}^{\gamma_2} \| u \|_{H^s_{qp}} + \| w \|_{H^{s}_{qp}}.
\end{align*}
Clearly the latter can be bounded by 
\[ \| \chi_n[ (\chi_n u) (\chi_n u_x)] - \chi_m[ (\chi_m v) (\chi_m v_x)] \|_{H^{s-1 -  \gamma_1}_{qp}}\leq \max\{ 1/n, 1/m\}^{\gamma_2} + o(1),\]
so that we obtain the result.
\end{proof}

Finally, we perform the following continuity in the initial data for the regularized Benjamin-Ono equation.
Namely, let $u_0, v_0 \in \mathcal{K} \subset H^s_{qp},$ where $\mathcal{K}$ is compact in $H^s_{qp}$. 
Let $(u_0)_\delta$ be a regularized version of $u_0$ as defined in \eqref{regData}, with parameter $\delta>0$.
Let $n,m \in \NN$. Let $u_{n}$ solve
\begin{align*}
	u_t &= \chi_n[(\chi_n u_n) (\chi_n u_{nx})] + \chi_n[H u_{nxx}],  \\
	u(0,\cdot) &= (u_0)_{\delta},
\end{align*}
whereas $u_{m}$ denotes the solution of
\begin{align*}
	u_t &= \chi_m[(\chi_m u_m) (\chi_m u_{mx})] + \chi_m[H u_{mxx}], \\
	u(0,\cdot) &= (v_0)_{\delta}.
\end{align*}
For ease of notation, write $u = u_{n}, v = u_{m}, w = u-v.$ 
Since $\mathcal{K}$ is compact, and so by Corollary \ref{UniformBound}, $u$ and $v$ exist on the same time interval.

We have the following result:
\begin{prop}\label{HsContData}
Let $m, n \in \RR_{+}$ and $\delta \in \RR_{+}$. 
Assume $m \geq n$. 
Let $u_0, v_0 \in \mathcal{K} \subset H^s_{qp},$ where $\mathcal{K}$ is compact in $H^s_{qp}$. 
Furthermore, assume $\delta \leq n^{(s-2)/s - \epsilon},$ for some $\epsilon>0.$ 
Let $u_{n}, u_{m}$ be solutions for the respective regularized Benjamin-Ono problem which exist on the same time interval $[0,T]$, defined as above.

Given $\eta > 0$, we can pick $\delta>0$ sufficiently large so that there is $\tilde{\eta}>0$ such that
$\| u_0 - v_0\| < \tilde{\eta}$ implies 
\[
\| u_{n} - u_{m} \|_{L^\infty_T H^s_{qp}} < \eta.
\]
In addition, the implicit constants depend only on the norm of the initial data and as such can be taken uniform as long as the initial data comes from a compact set.
\end{prop}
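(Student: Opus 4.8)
The plan is to re-run the energy estimate behind Proposition \ref{HsCauchyEstimate} essentially verbatim, the only structural change being that the difference of initial data, $w_0 := (u_0)_\delta - (v_0)_\delta$, is no longer an intrinsic $o(1)$ quantity but is instead controlled by $\|u_0-v_0\|_{H^s_{qp}}$, which we will make small only \emph{after} $\delta$ has been fixed. Write $u=u_n$, $v=u_m$, $w=u-v$. Since $w_0$ has Fourier support in $\{|k|\le\delta\}$, the computation proving \eqref{RD1} gives, for every $l\ge 0$,
\[
\|w_0\|_{H^{s+l}_{qp}} \lesssim_l \delta^l\|w_0\|_{H^s_{qp}} \le \delta^l\|u_0-v_0\|_{H^s_{qp}},
\]
and in particular $\|D^s_x w_0\|_{L^2_{qp}}$ and $\|w_0\|_{L^2_{qp}}$ are both $\le\|u_0-v_0\|_{H^s_{qp}}$. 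Compactness of $\mathcal{K}$ together with Corollary \ref{UniformBound} provides a common interval $[0,T]$ on which $u$ and $v$ exist, with $\|u\|_{L^\infty_T H^s_{qp}}$, $\|v\|_{L^\infty_T H^s_{qp}}$ bounded by a constant depending only on $\mathcal{K}$; every implicit constant below depends only on that bound.

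First I would record the low-order bounds obtained exactly as in the derivation of \eqref{L2CauchyEstimate} and \eqref{lowerInterp} --- nothing in those arguments used $u_0=v_0$, so with $\max\{1/n,1/m\}=1/n$ they yield
\[
\|w\|_{L^\infty_T L^2_{qp}} \lesssim n^{-(s-2)} + \|u_0-v_0\|_{H^s_{qp}}, \qquad \|w\|_{L^\infty_T H^{s-1}_{qp}} \lesssim n^{-(s-2)/s} + \|u_0-v_0\|_{H^s_{qp}}^{1/s},
\]
the latter by interpolation (Lemma \ref{InterpolationInequality}) against the uniform bound. Then I would differentiate $\|D^s_x w\|_{L^2_{qp}}^2$ and split it using the decompositions \eqref{linearterm} and \eqref{nonlinearterm}, exactly as in Proposition \ref{HsCauchyEstimate}. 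The terms $I$ and $II$ there --- after the fractional Leibniz rule (Proposition \ref{FractionalLeibniz}) and the energy cancellation --- contribute $\lesssim \|D^s_x w\|_{L^2_{qp}}^2 + \big(\|D^s_x w_0\|_{L^2_{qp}} + \delta\|w\|_{H^{s-1}_{qp}} + n^{-\epsilon}\big)\|D^s_x w\|_{L^2_{qp}}$, while the terms $III$, $IV$, $V$ and the linear remainder involve $(\chi_n-\chi_m)u$ rather than $w$ and, via Lemmas \ref{DifferenceEst} and \ref{refinedbound}, are bounded by $n^{-l}\delta^{l'}\|D^s_x w\|_{L^2_{qp}}$ for fixed exponents; under the standing hypothesis $\delta\le n^{(s-2)/s-\epsilon}$ (after, if necessary, shrinking $\epsilon$ as in \eqref{cond1}) these collapse to $n^{-l''}\|D^s_x w\|_{L^2_{qp}}$ with $l''>0$, by the arithmetic of \eqref{cond1}--\eqref{cond2}. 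Inserting the low-order bounds, $\delta\|w\|_{H^{s-1}_{qp}} \lesssim \delta n^{-(s-2)/s} + \delta\|u_0-v_0\|_{H^s_{qp}}^{1/s} \le n^{-\epsilon} + \delta\|u_0-v_0\|_{H^s_{qp}}^{1/s}$, so collecting everything gives
\[
\frac{\D}{\D t}\|D^s_x w\|_{L^2_{qp}} \lesssim \|D^s_x w\|_{L^2_{qp}} + A(\delta) + \|u_0-v_0\|_{H^s_{qp}} + \delta\,\|u_0-v_0\|_{H^s_{qp}}^{1/s},
\]
where $A(\delta)$ collects the purely $n$-decaying terms $n^{-l''} + n^{-\epsilon} + n^{-(s-2)}$.

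A Gronwall argument over $[0,T]$, together with the $L^2_{qp}$ bound, then gives
\[
\|w\|_{L^\infty_T H^s_{qp}} \lesssim e^{CT}\Big( A(\delta) + \|u_0-v_0\|_{H^s_{qp}} + \delta\,\|u_0-v_0\|_{H^s_{qp}}^{1/s}\Big).
\]
Because $\delta\le n^{(s-2)/s-\epsilon}$ forces $n$ above a threshold that grows with $\delta$, we have $A(\delta)\to 0$ as $\delta\to\infty$. Given $\eta>0$, I would first pick $\delta$ so large that $e^{CT}A(\delta)<\eta/2$; with $\delta$ now fixed the coefficient of $\|u_0-v_0\|_{H^s_{qp}}^{1/s}$ is a fixed constant, so I would then pick $\tilde\eta>0$ small enough that $e^{CT}\big(\tilde\eta + \delta\tilde\eta^{1/s}\big)<\eta/2$. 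Then $\|u_0-v_0\|_{H^s_{qp}}<\tilde\eta$ forces $\|u_n-u_m\|_{L^\infty_T H^s_{qp}}<\eta$; since $C$, $T$, $A(\delta)$ depend only on $\mathcal{K}$, the choices of $\delta$ and $\tilde\eta$ are uniform over $\mathcal{K}$.

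The hard part is not any single estimate but the bookkeeping of the two-tiered smallness: one must check, term by term through \eqref{nonlinearterm}, that every contribution carrying a positive power of $\delta$ also carries a positive power of $\|u_0-v_0\|_{H^s_{qp}}$ --- these are exactly the places that in Proposition \ref{HsCauchyEstimate} swallowed an intrinsic $o(1)$, now replaced either by $\delta^{\mathrm{power}}\|u_0-v_0\|_{H^s_{qp}}$ (via \eqref{RD1}) or by $\delta\|w\|_{H^{s-1}_{qp}}$ (via the interpolated low-order bound) --- whereas every contribution carrying a positive power of $\delta$ with no such compensation must instead be absorbed by the constraint $\delta\le n^{(s-2)/s-\epsilon}$ and therefore decay in $n$. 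If this separation holds, fixing $\delta$ before $\tilde\eta$ makes the quantifiers of the statement go through; verifying it is the only real work.
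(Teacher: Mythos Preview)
Your proposal is correct and follows essentially the same route as the paper: both re-run the energy estimate of Proposition \ref{HsCauchyEstimate}, replacing the intrinsic $o(1)$ coming from $\|w_0\|$ by quantities controlled by $\|u_0-v_0\|_{H^s_{qp}}$, isolate the single place (term $II$, via $\delta\|w\|_{H^{s-1}_{qp}}$) where a positive power of $\delta$ multiplies a data-difference term, and then choose $\delta$ first to kill the $n$-decaying pieces before choosing $\tilde\eta$. Your two-tiered bookkeeping discussion and the final Gronwall-then-choose-$\delta$-then-choose-$\tilde\eta$ logic match the paper's argument (the paper just makes the specific choice $\tilde\eta=\delta^{-2s}$).
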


\begin{proof}
The argument is almost identical to the proof of Proposition \ref{HsCauchyEstimate}.
The $L^2_{qp}$ estimate becomes
\[ 
	\| w \|_{L^2_{qp}} \lesssim \max\{1/n, 1/m\}^{s-2} + \| u_0 - v_0 \|_{L^2_{qp}}.
\]
The \eqref{lowerInterp} becomes
\begin{align*}
	\| w\|_{H^{s-1}_{qp}} &\lesssim \Big( \max\{1/n, 1/m\}^{s-2} + \|(u_0)_{\delta} - (v_0)_{\delta} \|_{L^2_{qp}} \Big)^{1/s} \\
	&\leq \max\{1/n, 1/m\}^{(s-2)/s} + \|(u_0)_{\delta} - (v_0)_{\delta} \|_{L^2_{qp}}^{1/s} \\
	&\leq \max\{1/n, 1/m\}^{(s-2)/s} + \| u_0 - v_0 \|_{L^2_{qp}}^{1/s}.
\end{align*}
As for the $D^s_x$ estimate, the only place where the argument changes is in the estimate of $II$.
Here, we have
\begin{align*}
	\sum_{k \in \ZZ^N}&\FT{D^s_x \chi_m[(\chi_m w)(\chi_m u)_x]}(k) \FT D^s_x w(-k) \\
	&= \sum_{k \in \ZZ^N}\FT{D^s_x [(\chi_m w)(\chi_m u)_x]}(k) \FT D^s_x \chi_m w(-k) \\
	&= \sum_{k \in \ZZ^N}\FT{D^s_x [(\chi_m w)(\chi_m u)_x] - \chi_m w D^s_x (\chi_m u)_x}(k) \FT D^s_x \chi_m w(-k) \\
	&\qquad + \sum_{k \in \ZZ^N}\FT{ (\chi_m w) D^s_x (\chi_m u)_x}(k) \FT D^s_x \chi_m w(-k).
\end{align*}
The first sum is bounded by 
\[ 
\| \chi_m w \|_{H^s_{qp}} \| (\chi_m v)_x\|_{H^{s-1}_{qp}} \leq  \| w \|_{H^s_{qp}} \| v \|_{H^{s}_{qp}} \lesssim \| w \|_{H^s_{qp}} \leq \| D^s_{x} w\|_{L^2_{qp}} +  \| D^s_{x} w_0\|_{L^2_{qp}}.
\]
For the second sum, we bound by 
\begin{align*}
	\| \FT{ (\chi_m w)D^s_x (\chi_m u)_x} \|_{\ell^2_k} \| D^s_x \chi_m w\|_{L^2_{qp}} &\lesssim \| \chi_m w \|_{H^{s-1}_{qp}} \| D^s_x (\chi_m u)_x \|_{L^2_{qp}}\| D^s_x \chi_m w\|_{L^2_{qp}} \\
	&\lesssim \delta \| w \|_{H^{s-1}_{qp}} \| D^s_x u \|_{L^2_{qp}} \| D^s_x w\|_{L^2_{qp}}\\
	&\lesssim \delta \| w \|_{H^{s-1}_{qp}} \| D^s_x w\|_{L^2_{qp}}
\end{align*}
where we used Lemma \ref{regData}.
We then use \eqref{lowerInterp} to gain decay from $\delta \| w \|_{H^{s-1}_{qp}}$:
\begin{align*}
	\delta \| w \|_{H^{s-1}_{qp}} &\lesssim \delta \max\{1/n, 1/m\}^{(s-2)/s} + \delta \| u_0 - v_0 \|_{L^2_{qp}}^{1/s} \\
	&= \delta n^{(2-s)/s} + \delta \| u_0 - v_0 \|_{L^2_{qp}}^{1/s},
\end{align*}
for $\delta_1$ large enough.
Therefore, $II$ is bounded by 
\[
	(\delta n^{(2-s)/s} + \delta \| u_0 - v_0 \|_{L^2_{qp}}^{1/s} )\| D^s_x w\|_{L^2_{qp}}.
\]
Combining all the estimates as in the proof of Proposition \ref{HsCauchyEstimate} we have
\[ 
	\| D^s_x w \|_{L^2_{qp}} \lesssim n^{\frac{2(s-2)}{s} - 3 \epsilon} + n^{-\epsilon} + \delta \| D^s_x(u_0 - v_0) \|_{L^2_{qp}}^{1/s}.
\]
Since $\frac{2(s-2)}{s} - 3 \epsilon < 0$, there is some $\ell > 0$ such that 
\[ n^{\frac{2(s-2)}{s} - 3 \epsilon} + n^{-\epsilon} \leq \delta^{-\ell}. 
\]
Combining this with the $L^2_{qp}$ estimate we have
\[ 
\| w \|_{H^s_{qp}} \leq \| w \|_{L^2_{qp}} + \| D^s_x w \|_{L^2_{qp}} \lesssim n^{2-s} +  n^{\frac{2(s-2)}{s} - 3 \epsilon} + n^{-\epsilon} + \delta \| D^s_x(u_0 - v_0) \|_{L^2_{qp}}^{1/s} + \| u_0 - v_0 \|_{L^2_{qp}}. 
\]
Since all the exponents of $n$ are negative, there is some $\ell> 0$ such that 
\[
n^{2-s} +  n^{\frac{2(s-2)}{s} - 3 \epsilon} + n^{-\epsilon} \leq \delta^{-\ell},
\]
so that 
\[ 
\| w \|_{H^s_{qp}} \lesssim \delta^{-\ell} + \delta \| D^s_x(u_0 - v_0) \|_{L^2_{qp}}^{1/s} + \| u_0 - v_0 \|_{L^2_{qp}}, 
\]
where this inequality holds up to some constant $A$.

Given $\eta > 0$, pick $\delta>0$ big enough so that $\delta^{-\ell} + \delta^{-1} +\delta^{-2s} < A^{-1}\eta$.
Now, pick $\tilde{\eta} = \delta^{-2s}$. 
Therefore, 
\[ 
	\| w \|_{H^s_{qp}} \leq A(n^{-\ell} + \delta (\delta^{-2s})^{1/s} + \delta^{-2s}) = A(\delta^{-\ell} + \delta^{-1} + \delta^{-2s}) < \eta,
\]
which completes the proof.
\end{proof}
\section{Local Wellposedness of Benjamin-Ono equation}
In this section, we combine the results obtained earlier to prove local-wellposedness of the Benjamin-Ono equation. 
There are several items to show:
\begin{itemize}
	\item the existence of a solution $u$ to the Benjamin-Ono equation \eqref{LWPeq1} in $H^s_{qp}$,
	\item uniqueness of the solution $u$,
	\item the continuity in time of the norm $\| u \|_{H^s_{qp}}$,
	\item the continuity in the initial data.
\end{itemize}

We begin with the first item.
\begin{prop}
Let $s > N/2 + 1$. 
Let $u_0 \in H^{s}_{qp}$.
Then, there exists $T = T(s, \| u_0 \|_{H^s_{qp}})$ such that there exists a unique solution $u \in L^\infty([0,T]; H^s_{qp})$ of the Benjamin-Ono equation with the initial data $u_0$.
In particular, 
\begin{itemize}
	\item if $s > N/2 + 2$, then $u$ solves \eqref{LWPeq1} pointwise,
	\item if $N/2 + 2 \geq s > N/2 + 1$, then $u$ solves \eqref{LWPeq1} in the sense of distributions.
\end{itemize}
\end{prop}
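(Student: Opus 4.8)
The plan is to obtain $u$ as the limit of the Bona--Smith approximations of Section~3, to verify that the limit solves the equation in the asserted sense, and to prove uniqueness by a low-order energy estimate. First I would fix $\epsilon>0$ as in the proof of Proposition~\ref{HsCauchyEstimate} and choose increasing sequences $n_j\to\infty$ and $\delta_j\to\infty$ with $1<\delta_j\leq n_j^{(s-2)/s-\epsilon}$ and $\delta_j<\delta_{j+1}$; writing $u_j:=u_{n_j,\delta_j}$ for the solution of the regularized problem \eqref{LWPeq1} with parameter $n_j$ and data $(u_0)_{\delta_j}$, Theorem~\ref{LocalExistenceBad} and Corollary~\ref{UniformBound} applied to the compact set $\mathcal{K}=\{(u_0)_\delta:\delta\geq 0\}\cup\{u_0\}$ give a common existence interval $[0,T]$ with $T=T(s,\|u_0\|_{H^s_{qp}})$ and a uniform bound $\sup_j\|u_j\|_{L^\infty_TH^s_{qp}}\leq M$. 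Proposition~\ref{HsCauchyEstimate}, applied with $n=n_j$, $m=n_k$, $\delta_1=\delta_j$, $\delta_2=\delta_k$ for $j<k$, then shows (with constants uniform over $\mathcal{K}$) that $\{u_j\}$ is Cauchy in $C([0,T];H^s_{qp})$, so it converges to some real-valued $u\in C([0,T];H^s_{qp})\subset L^\infty([0,T];H^s_{qp})$ with $u(0)=\lim_j(u_0)_{\delta_j}=u_0$ by \eqref{RD3}.

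Next I would pass to the limit in the integral form $u_j(t)=(u_0)_{\delta_j}+\int_0^t\bigl(\chi_{n_j}[(\chi_{n_j}u_j)(\chi_{n_j}(u_j)_x)]+\chi_{n_j}[H(u_j)_{xx}]\bigr)\,\D\tau$. Since $\|u_j-u\|_{L^\infty_TH^s_{qp}}\to0$ and $\chi_{n_j}\to I$ strongly, with the convergence uniform on the compact set $\{u(t):t\in[0,T]\}$ (the hypothesis $\alpha\cdot k\neq0$ guarantees each mode is eventually retained), one gets $\chi_{n_j}u_j\to u$ in $C([0,T];H^s_{qp})$; the algebra property then gives $(\chi_{n_j}u_j)(\chi_{n_j}(u_j)_x)\to uu_x$ in $C([0,T];H^{s-1}_{qp})$ and boundedness of $H$ on $H^{s-2}_{qp}$ gives $H(u_j)_{xx}\to Hu_{xx}$ in $C([0,T];H^{s-2}_{qp})$, so the whole right-hand side converges in $C([0,T];H^{s-2}_{qp})$ to $uu_x+Hu_{xx}$. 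Passing to the limit yields $u(t)=u_0+\int_0^t(uu_x+Hu_{xx})(\tau)\,\D\tau$ in $H^{s-2}_{qp}$ for all $t$. If $s>N/2+2$, the integrand is continuous with values in $H^{s-2}_{qp}\hookrightarrow C(\RR)$ and $\partial_x,\partial_x^2$ carry $H^s_{qp}$ into $C(\RR)$, so $u\in C^1([0,T];H^{s-2}_{qp})$ and every term of \eqref{LWPeq1} is a continuous function of $(x,t)$, hence the equation holds pointwise; if $N/2+1<s\leq N/2+2$, I would instead observe that $u(t)\in H^s_{qp}\hookrightarrow C_b(\RR)$ is a genuine function and rewrite $uu_x=\tfrac12\partial_x(u^2)$ and $Hu_{xx}=\partial_x^2(Hu)$ with $u^2,Hu\in C([0,T];H^s_{qp})$, so the integral identity is precisely the assertion that $u$ solves \eqref{LWPeq1} as an identity of distributions on $\RR\times(0,T)$.

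For uniqueness I would take two solutions $u,\tilde u\in L^\infty([0,T];H^s_{qp})$ with the same data, set $w=u-\tilde u$, and note that $w$ obeys the difference of the integral identities, so $w(0)=0$ and $w_t=uw_x+w\tilde u_x+Hw_{xx}\in L^\infty([0,T];H^{s-2}_{qp})$. Since $s>2$ we have $w\in H^s_{qp}\subset H^{2-s}_{qp}$, so $\langle w_t,w\rangle_{L^2_{qp}}$ is a legitimate pairing and, after mollifying in $x$ (using Proposition~\ref{ExchangeDerivandInt} on the smoothed quantities and passing to the limit), $\tfrac{\D}{\D t}\|w\|_{L^2_{qp}}^2=2\langle w_t,w\rangle_{L^2_{qp}}$. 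The linear contribution vanishes, $\langle Hw_{xx},w\rangle_{L^2_{qp}}=-\langle Hw_x,w_x\rangle_{L^2_{qp}}=0$, by the sign-symmetry argument of Proposition~\ref{lineartermvanishes}; for the nonlinear part, integration by parts gives $\langle uw_x,w\rangle_{L^2_{qp}}=-\tfrac12\langle u_xw,w\rangle_{L^2_{qp}}$, and with the Sobolev inequality (Proposition~\ref{SobolevInequality}, valid since $s-1>N/2$) both $\langle uw_x,w\rangle_{L^2_{qp}}$ and $\langle w\tilde u_x,w\rangle_{L^2_{qp}}$ are bounded by a multiple of $(\|u\|_{H^s_{qp}}+\|\tilde u\|_{H^s_{qp}})\|w\|_{L^2_{qp}}^2\lesssim_M\|w\|_{L^2_{qp}}^2$. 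Then $\tfrac{\D}{\D t}\|w\|_{L^2_{qp}}^2\lesssim_M\|w\|_{L^2_{qp}}^2$, and Gronwall with $w(0)=0$ forces $w\equiv0$.

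I expect the main obstacle to be the passage to the limit in the nonlinearity in the presence of the $j$-dependent cut-offs $\chi_{n_j}$: the working topology (here $C([0,T];H^{s-2}_{qp})$) must be strong enough to identify the limiting equation yet weak enough that the algebra property and $\chi_{n_j}u_j\to u$ can be applied, and the argument must be run separately in the low-regularity window $N/2+1<s\leq N/2+2$, where $Hu_{xx}$ only makes sense after an integration by parts and \eqref{LWPeq1} must be read distributionally; this same low-regularity subtlety is what forces the mollification step in the energy identity used for uniqueness.
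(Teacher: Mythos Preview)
Your proof is correct and follows the paper's overall strategy: the construction of the diagonal Bona--Smith sequence $u_j=u_{n_j,\delta_j}$ with $\delta_j\leq n_j^{(s-2)/s-\epsilon}$, the common time interval from Corollary~\ref{UniformBound}, and the Cauchy property from Proposition~\ref{HsCauchyEstimate} are exactly as in the paper.

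The one genuine difference is in how you pass to the limit in the equation. The paper invokes Corollary~\ref{CauchyLikeBound}, which tracks the nonlinear and linear differences through the explicit decompositions \eqref{nonlinearterm}--\eqref{linearterm} and yields convergence in $H^{s-1-\gamma}_{qp}$ and $H^{s-2-\gamma}_{qp}$ with a small loss $\gamma>0$ and rates of the form $o(1)+n^{-\gamma}$. You instead argue directly: having already secured $u_j\to u$ in $C([0,T];H^s_{qp})$, you use the strong convergence $\chi_{n_j}\to I$ (uniform on the compact orbit $\{u(t):t\in[0,T]\}$, which follows from $|\alpha\cdot k|\geq n\Rightarrow |k|\gtrsim n$ and hence a genuine tail estimate) together with the algebra property in $H^{s-1}_{qp}$ to send the right-hand side to $uu_x+Hu_{xx}$ in $C([0,T];H^{s-2}_{qp})$. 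Your route is slightly cleaner in that it avoids Corollary~\ref{CauchyLikeBound} and the artificial $\gamma$-loss, at the cost of not producing an explicit rate; the paper's route has the advantage that the bounds of Corollary~\ref{CauchyLikeBound} are reusable elsewhere. You also fold uniqueness into this proposition (the paper proves it separately), and your remark on mollifying to justify $\tfrac{\D}{\D t}\|w\|_{L^2_{qp}}^2$ for merely $L^\infty_TH^s_{qp}$ solutions is a point the paper passes over silently.
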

\begin{proof}
Consider the sequence $\{ (u_0)_\delta \}$ of regularized initial data \eqref{regData}.
Given $\delta>0,$ pick  $n_\delta = \exp(\frac{s}{s-2-\epsilon s} \log \delta),$ where $\epsilon >0$ is chosen as in Proposition \ref{HsCauchyEstimate}.
With this choice, we satisfy $\delta = n^{(s-2)/s - \epsilon}$.

Consider the sequence $u^{\delta} := u_{n_\delta, \delta}$. 
From the previous section, we know that this sequence is Cauchy in $L^\infty([0,T]; H^s_{qp})$, so it has a limit $u$.
Furthermore, from Corollary \ref{UniformBound}, it is known that the time of existence $T$ satisfies the bound
\[
	T < \frac{1}{C \| u_0 \|_{H^s_{qp}}}.
\]
The independence of the bound on $T$ from $\delta$ and $n_\delta$ ensures that when the limit is taken, the time of existence is not affected.
Thus, $u \in L^\infty([0,T]; H^s_{qp})$.

We check that $u$ solves the Benjamin-Ono equation.

Let $\gamma > 0$ be such that $s-2-\gamma >0$ and $s-1-\gamma > N/2$. 
By Corollary \ref{CauchyLikeBound}, we know that 
\[
	\| \chi_n[ (\chi_n u) (\chi_n u_x)] - \chi_m[ (\chi_m v) (\chi_m v_x)] \|_{H^{s-1 -  \gamma}_{qp}} \lesssim o(1) + \max\{1/n, 1/m\}^{\gamma},
\]
and
\[
	\| \chi_n Hu_{xx} - \chi_m Hv_{xx} \|_{H^{s - 2 - \gamma}_{qp}} \lesssim o(1) + \max\{1/n, 1/m\}^{\gamma}.
\]

Letting $n = n_\delta$ and $m, \delta_2 \to \infty$ yields the following bound:
\begin{equation}\label{nonlinearSolBound}
	\| \chi_n[ (\chi_n u^\delta) ( \chi_n u^\delta_x)] - u u_x \|_{H^{s-1 -  \gamma}_{qp}} \lesssim o(1) + n^{-\gamma},
\end{equation}
and 
\begin{equation}\label{linearSolBound}
	\| \chi_n Hu^{\delta}_{xx} - Hu_{xx} \|_{H^{s - 2 \gamma}_{qp}} \lesssim o(1) + n^{-\gamma},
\end{equation}
where for the ease of writing we write $n := n_\delta$.

By integrating, we have
\begin{equation}\label{ModelEq}
	u^\delta (t, x) = (u_0)_\delta(x) + \int^t_0 \chi_n[(\chi_n u^\delta) (\chi_n u^\delta_{x})] + \chi_n[H u^\delta_{xx}] \D s,
\end{equation} 
where $n$ is defined as above.

To show that $u$ solves the original Benjamin-Ono problem, we would like to obtain that 
\begin{equation}\label{LimitEq}
	u(t, x) = u_0(x) + \int^t_0 uu_x(s,x) + H u_{xx}(s,x) \D s
\end{equation}
holds pointwise or in the sense of distributions.

Consider the difference of the right-hand sides of \eqref{ModelEq} and \eqref{LimitEq}:
\begin{equation}\label{IntDifference}
	\int^t_0 \chi_n[(\chi_n u^\delta) (\chi_n u^\delta_{x})] - uu_x(s,x) + \chi_n[H u^\delta_{xx}] - H u_{xx}(s,x) \D s
\end{equation}
Taking the $H^{s - 2 - \gamma}_{qp}$ norm of the integrand and applying the bounds \eqref{nonlinearSolBound} and \eqref{linearSolBound}, we are able to
obtain that this is  of the form $o(1) + n^{-\gamma}.$
Letting $\delta \to \infty$ implies $n \to \infty$ as well. 
Since the bounds are uniform in $t$, we also obtain that the expression \eqref{IntDifference} vanishes as $\delta \to \infty$. 
Since $u^\delta \to u$ and $(u_0)_\delta \to u_0$ in $H^{s}_{qp}$, they also converge in $H^{s-2}_{qp}$.
Therefore, \eqref{LimitEq} holds in $H^{s-2}_{qp}$.

Finally, if $s > N/2 + 2,$ then $s-2 > N/2$. By the Sobolev inequality, the equation \eqref{LimitEq} holds pointwise.
If $N/2 + 2 \geq s > N/2 + 1$, then $N/2 \geq s > N/2 - 1$, and so the equation \eqref{LimitEq} holds in the sense of distributions.
\end{proof}

\begin{prop}[Weak continuity in time]
The limit $u$ is weakly continuous in time in $H^{s}_{qp}.$
\end{prop}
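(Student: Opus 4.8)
The plan is to derive weak continuity in $H^s_{qp}$ from two ingredients already at our disposal: the uniform bound $M := \norm{u}_{L^\infty([0,T];H^s_{qp})} < \infty$ from Corollary \ref{UniformBound}, and the fact, established in the previous proposition, that $u$ satisfies the integral identity \eqref{LimitEq} in $H^{s-2}_{qp}$. From the latter I would first record that $u$ is Lipschitz as an $H^{s-2}_{qp}$-valued curve: the integrand $uu_x + Hu_{xx}$ lies in $L^\infty([0,T];H^{s-2}_{qp})$ --- the quadratic term by the algebra property applied at regularity $s-1>N/2$, and the linear term because $\abs{\FT{H\partial_x^2 f}(k)} = (\alpha\cdot k)^2\abs{\hat f(k)} \lesssim |k|^2\abs{\hat f(k)}$ --- so \eqref{LimitEq} gives $\norm{u(t) - u(t')}_{H^{s-2}_{qp}} \leq |t-t'|\,\norm{uu_x + Hu_{xx}}_{L^\infty_T H^{s-2}_{qp}}$. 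In particular $t \mapsto \hat u(t,k)$ is continuous on $[0,T]$ for each $k \in \ZZ^N$; moreover, for every $t$ one may pick times $t_n \to t$ at which $u(t_n) \in H^s_{qp}$ with $\norm{u(t_n)}_{H^s_{qp}} \leq M$, and since $u(t_n) \to u(t)$ in $H^{s-2}_{qp}$ pins down any weak-$H^s_{qp}$ limit as $u(t)$, lower semicontinuity of the norm gives $u(t) \in H^s_{qp}$ with $\norm{u(t)}_{H^s_{qp}} \leq M$ for \emph{every} $t \in [0,T]$.

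With this in place, fix $v \in H^s_{qp}$ and consider
\[
	g(t) := \sum_{k \in \ZZ^N} \jp{k}^{2s}\,\hat u(t,k)\,\overline{\hat v(k)},
\]
the $H^s_{qp}$ inner product of $u(t)$ with $v$; proving that $g$ is continuous on $[0,T]$ for every such $v$ is precisely the claim that $u$ is weakly continuous in $H^s_{qp}$. For $R>0$ let $v^{(R)}$ be the Fourier truncation with $\FT{v^{(R)}}(k) = \mathbb{I}_{|k|\leq R}(k)\,\hat v(k)$, so that $v^{(R)} \to v$ in $H^s_{qp}$ as $R \to \infty$. Then $g_R(t) := \sum_{|k|\leq R} \jp{k}^{2s}\,\hat u(t,k)\,\overline{\hat v(k)}$ is a finite sum of continuous functions of $t$, hence continuous; and by the Cauchy--Schwarz inequality in $H^s_{qp}$ together with the uniform bound,
\[
	\sup_{t \in [0,T]} \abs{g(t) - g_R(t)} \leq \sup_{t\in[0,T]}\norm{u(t)}_{H^s_{qp}}\,\norm{v - v^{(R)}}_{H^s_{qp}} \leq M\,\norm{v - v^{(R)}}_{H^s_{qp}} \longrightarrow 0
\]
as $R \to \infty$. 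Thus $g$ is a uniform limit of continuous functions, hence continuous, and since $v \in H^s_{qp}$ was arbitrary, $u$ is weakly continuous in time in $H^s_{qp}$.

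I expect no genuine obstacle here: the argument is the standard density-plus-uniform-boundedness trick, and the only point needing care is producing a topology in which $u$ is literally time-continuous --- which is already supplied by the integral formulation in $H^{s-2}_{qp}$. I would close by noting, for use in the sequel, that weak continuity in $H^s_{qp}$ combined with continuity of $t \mapsto \norm{u(t)}_{H^s_{qp}}$ --- which follows from the energy identity --- promotes $u$ to a strongly continuous $H^s_{qp}$-valued curve, completing the regularity claim of the main theorem.
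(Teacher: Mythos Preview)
Your proof is correct. Both your argument and the paper's rest on the same principle---approximate the test functional by one against which continuity is immediate, then invoke the uniform $H^s_{qp}$ bound to control the remainder---but the implementations differ. The paper works through the regularized solutions $u^\delta$: it splits $\langle \phi, u(s)-u(t)\rangle$ through $u^\delta(s)$ and $u^\delta(t)$, uses that each $u^\delta$ is strongly $C^1$ in time (from the Picard theorem), and handles the cross terms $\langle \phi, u-u^\delta\rangle$ by density of $H^{-m}_{qp}$ in $H^{-s}_{qp}$ together with the convergence $u^\delta\to u$ in $H^m_{qp}$ for $m<s$. Your route avoids the approximants altogether: you extract Lipschitz continuity in $H^{s-2}_{qp}$ directly from the integral identity \eqref{LimitEq}, which makes each Fourier coefficient $\hat u(\cdot,k)$ continuous, and then approximate the test function by its finite Fourier truncation. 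Your version is arguably more self-contained---it uses nothing from the regularized problem beyond what survives in the limit---and it also makes explicit, via the Fatou/weak-limit step, that $u(t)\in H^s_{qp}$ with norm at most $M$ at \emph{every} time, a point the paper's argument leaves implicit.
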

\begin{proof}
Let $\phi \in H^{-s}_{qp}$.
Note
\begin{align*}
	\langle \phi, u(s) - u(t) \rangle = \langle \phi, u(s) - u^\delta(s) \rangle + \langle \phi, u^\delta(s) - u^\delta(t) \rangle + \langle \phi, u^\delta(t) - u(t) \rangle.
\end{align*}
By Theorem \ref{LocalExistenceBad}, the regularized solutions are continuous in time. 
As such, the middle term can be made as small as we would like.
As for the first and the third, it is enough to prove that one of them decays.  

Let $\epsilon >0$. For any $m$ satisfying $1 \leq m <s,$ let $\tilde{\phi} \in H^{-m}$ be given such that 
\[ 
\| \phi - \tilde{\phi} \|_{H^{-s}_{qp}} \leq \frac{\epsilon}{3}.
\]
Now, write and bound
\begin{align*}
	\langle \phi, u(s) - u^\delta(s) \rangle 
	&= \langle \phi- \tilde{\phi}, u(s) \rangle + \langle \tilde{\phi}, u(s) - u^\delta(s)\rangle + \langle \tilde{\phi} - \phi, u^\delta (s) \rangle  \\
	&\leq \| \phi - \tilde{\phi} \|_{H^{-s}_{qp}} \| u \|_{H^{s}_{qp}} + \| \tilde{\phi} \|_{H^{-m}_{qp}} \| u - u^\delta \|_{H^{m}_{qp}}
	+ \| \phi - \tilde{\phi} \|_{H^{-s}_{qp}} \| u^\delta \|_{H^{s}_{qp}}.
\end{align*}
Note that $\| u \|_{H^{s}_{qp}}$ and $\| u^\delta \|_{H^{s}_{qp}}$ are bounded due to uniform bounds. 
Thus, we obtain that
\[
	\langle \phi, u(s) - u^\delta(s) \rangle \lesssim \frac{\epsilon}{3} + \frac{\epsilon}{3} + \frac{\epsilon}{3} = \epsilon.
\]
Therefore, we have that $u \in C_W([0,T]; H^s_{qp})$.
\end{proof}

We move on to showing uniqueness.
\begin{prop}[Uniqueness]
	Let $s > N/2 + 1$. Then, solutions of the Benjamin-Ono equation are unique. 
\end{prop}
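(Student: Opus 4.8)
The plan is a direct $L^2_{qp}$ energy estimate on the difference of two solutions. Suppose $u$ and $v$ both solve the Benjamin--Ono equation on $[0,T]$ with the same initial datum $u_0\in H^s_{qp}$, say in the class $L^\infty([0,T];H^s_{qp})$, and set $w=u-v$. Subtracting the two equations and writing $uu_x-vv_x=u w_x+w v_x$, we see that
\[
w_t = u w_x + w v_x + H w_{xx},\qquad w(0)=0 .
\]
From this identity together with the product and fractional Leibniz estimates established above (which apply since $s-1>N/2$) one gets $w_t\in L^\infty([0,T];H^{s-2}_{qp})$, so the pairing $\jp{w_t,w}$ is well defined and $t\mapsto\|w(t)\|_{L^2_{qp}}^2$ is absolutely continuous.

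I would then differentiate $\|w\|_{L^2_{qp}}^2$ in time. When $s>N/2+2$ the integrand $w\,w_t$ is continuous in $t$ uniformly in $x$ by the Sobolev inequality (Proposition~\ref{SobolevInequality}), so Proposition~\ref{ExchangeDerivandInt} applies and yields
\[
\frac{\D}{\D t}\|w\|_{L^2_{qp}}^2 = 2\jp{u w_x,w} + 2\jp{w v_x,w} + 2\jp{H w_{xx},w}.
\]
The Hilbert-transform term vanishes by Proposition~\ref{lineartermvanishes} (applied with $s=0$ and $w$ in place of $u$). For the first term, integration by parts (Lemma~\ref{IntegrationByParts}) gives $\jp{u w_x,w}=-\tfrac12\jp{u_x,w^2}$, whence $|\jp{u w_x,w}|\leq\tfrac12\|u_x\|_{L^\infty}\|w\|_{L^2_{qp}}^2\lesssim\|u\|_{H^s_{qp}}\|w\|_{L^2_{qp}}^2$ using Proposition~\ref{SobolevInequality} and $s-1>N/2$; similarly $|\jp{w v_x,w}|=|\jp{v_x,w^2}|\lesssim\|v\|_{H^s_{qp}}\|w\|_{L^2_{qp}}^2$. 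With $K:=\sup_{t\in[0,T]}\bigl(\|u(t)\|_{H^s_{qp}}+\|v(t)\|_{H^s_{qp}}\bigr)<\infty$ this gives the differential inequality $\frac{\D}{\D t}\|w\|_{L^2_{qp}}^2\leq C K\|w\|_{L^2_{qp}}^2$.

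Since $\|w(0)\|_{L^2_{qp}}=0$, Gronwall's inequality forces $\|w(t)\|_{L^2_{qp}}=0$ for every $t\in[0,T]$; hence all Fourier coefficients of $w(\cdot,t)$ vanish and $u\equiv v$. The only delicate point, and the step I expect to be the main obstacle, is the justification of the energy identity in the remaining range $N/2+1<s\leq N/2+2$, where $w_t$ lies only in a space of nonpositive index and Proposition~\ref{ExchangeDerivandInt} does not apply verbatim. I would handle this by regularization in the manner of Bona--Smith already used in Section~3: replace $w$ by $\chi_n w$ (or by a smooth mollification), for which the identity above is legitimate, bound the resulting commutator terms $\jp{[\chi_n,u]\,\partial_x w,\chi_n w}$ and their analogues by a Calder\'on-type commutator estimate so that they tend to $0$ as $n\to\infty$, and pass to the limit to recover $\frac{\D}{\D t}\|w\|_{L^2_{qp}}^2\leq C K\|w\|_{L^2_{qp}}^2$; the conclusion then follows exactly as above.
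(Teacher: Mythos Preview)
Your approach is essentially the same as the paper's: an $L^2_{qp}$ energy estimate on $w=u-v$, using Proposition~\ref{lineartermvanishes} to kill the Hilbert term, integration by parts and the Sobolev inequality to bound the nonlinear contributions by $C\|w\|_{L^2_{qp}}^2$, and Gronwall to conclude. The paper uses the equivalent decomposition $uu_x-vv_x=wu_x+vw_x$ and is terser (it simply refers back to the Cauchy-estimate computation and does not explicitly address the justification issue in the range $N/2+1<s\le N/2+2$ that you flag), so your extra care there goes beyond what the paper writes.
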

\begin{proof}
Let $s > N/2 + 1$ and let $u_0 \in H^{s}_{qp}$. 
Suppose $u,v$ are two solutions of the Benjamin-Ono equation with initial data $u_0$.
Write $w = u-v$. Then, $w$ satisfies
\[ 
	w_t = wu_x + v w_x + Hw_{xx}
\]
and $w(0) = 0$.
Now, as in the proof of the Cauchy estimate one finds that
\[
	\frac{\D \|w\|_{L^2_{qp}}^2}{\D t} \lesssim (\| u \|_{H^s_{qp}} + \| v \|_{H^s_{qp}}) \|w\|_{H^s_{qp}}^2 \leq C\|w\|_{L^2_{qp}}^2,
\]
where $C$ does not depend on $w$ but may depend on the uniform bound of $u$ and $v$.
An application of Gronwall's inequality yields
\[
	\|w(t)\|_{L^2_{qp}}^2 \leq \|w(0)\|_{L^2_{qp}}^2 \exp(Ct) = 0,
\]
since clearly $\|w(0)\|_{L^2_{qp}}^2 = 0.$
Thus, we have $u=v$ almost everywhere.
Now, since $s > N/2 + 1$ and $u,v \in H^{s}_{qp}$, $u$ and $v$ are both continuous. 
Thus, $u=v$ everywhere and we have uniqueness.
\end{proof}

We now are able to prove that $u \in C([0,T]; H^s_{qp})$.  Since $H^{s}_{qp}$ is a Hilbert space and we have already demonstrated weak continuity, 
all that remains to show is continuity of the norm.
\begin{prop} 
For $s > N/2 +1$, we have $\| u(t) \|_{H_{qp}^s}$ is a continuous function of time.
\end{prop}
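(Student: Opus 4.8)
The plan is to deduce continuity of $t\mapsto\|u(t)\|_{H^s_{qp}}$ from the strong, top-order convergence of the approximating sequence, which has already been established. Recall that $u$ was constructed as the limit in $L^\infty([0,T];H^s_{qp})$ of the diagonal family $u^\delta:=u_{n_\delta,\delta}$, with $n_\delta$ chosen so that $\delta=n_\delta^{(s-2)/s-\epsilon}$; by Proposition \ref{HsCauchyEstimate} this family is Cauchy in $L^\infty_T H^s_{qp}$ (hence $\|u^\delta-u\|_{L^\infty_T H^s_{qp}}\to 0$ as $\delta\to\infty$), and by Theorem \ref{LocalExistenceBad} together with Corollary \ref{UniformBound} each $u^\delta$ lies in $C([0,T];H^s_{qp})$.

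First I would note that, for each fixed $\delta$, the map $t\mapsto\|u^\delta(t)\|_{H^s_{qp}}$ is continuous on $[0,T]$, being the composition of the continuous path $t\mapsto u^\delta(t)$ with the $1$-Lipschitz norm. Next I would invoke the reverse triangle inequality to get, for every $t\in[0,T]$,
\[
\bigl|\,\|u^\delta(t)\|_{H^s_{qp}}-\|u(t)\|_{H^s_{qp}}\,\bigr|\leq\|u^\delta(t)-u(t)\|_{H^s_{qp}}\leq\|u^\delta-u\|_{L^\infty_T H^s_{qp}}.
\]
Letting $\delta\to\infty$, the right-hand side tends to $0$ uniformly in $t$, so $\|u^\delta(\cdot)\|_{H^s_{qp}}\to\|u(\cdot)\|_{H^s_{qp}}$ uniformly on $[0,T]$; a uniform limit of continuous functions is continuous, which is the assertion. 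Finally, combining this with the weak continuity of $u$ proved above and with the Hilbert space structure of $H^s_{qp}$ (in which weak convergence together with convergence of norms forces strong convergence), I would conclude that $u\in C([0,T];H^s_{qp})$, thereby completing the proof of the main theorem.

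I do not anticipate a genuine obstacle here: the delicate step of the Bona-Smith scheme, namely the top-order Cauchy estimate of Proposition \ref{HsCauchyEstimate}, has already been carried out, and the present statement is a soft corollary of it. Should one wish to avoid relying on that estimate, the classical alternative is to prove the two one-sided inequalities separately --- upper semicontinuity of $t\mapsto\|u(t)\|_{H^s_{qp}}$ from the energy estimate of Proposition \ref{UB-prop} applied to the limiting equation, and lower semicontinuity from weak lower semicontinuity of the norm under the already-established weak continuity --- with the upper bound being the subtle half. Since the stronger estimate is in hand, the uniform-convergence argument above is the most economical; the only point requiring care is that the family $u^\delta$ employed is precisely the one for which top-order convergence was shown, which is exactly how $u$ was defined.
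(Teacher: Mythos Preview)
Your argument is correct and takes a genuinely different route from the paper's.

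The paper follows the classical Bona--Smith script: it proves right-continuity of the norm at $t=0$ by combining the a~priori estimate (Proposition~\ref{UB-prop}) to get $\limsup_{t\to 0^+}\|u(t)\|_{H^s_{qp}}\lesssim\|u_0\|_{H^s_{qp}}$, with weak lower semicontinuity (via the already-established weak continuity) to get the matching $\liminf$ bound; it then transports right-continuity to every $T^*\in(0,T)$ by uniqueness, and obtains left-continuity from the time-reversal symmetry $(t,x)\mapsto(-t,-x)$ of Benjamin--Ono. Your approach instead exploits that the paper has \emph{already} proved the top-order Cauchy estimate (Proposition~\ref{HsCauchyEstimate}), so that $u^\delta\to u$ in $L^\infty_T H^s_{qp}$; since each $u^\delta\in C([0,T];H^s_{qp})$ by the Picard theorem, the norm $\|u(\cdot)\|_{H^s_{qp}}$ is a uniform limit of continuous functions, hence continuous. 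In fact your argument yields $u\in C([0,T];H^s_{qp})$ directly, without passing through weak continuity at all.

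What each approach buys: yours is shorter and more transparent given the machinery in place, and it makes clear that once top-order convergence is available the norm-continuity step is soft. The paper's route is the standard one in the Bona--Smith literature and remains applicable in settings where one only has convergence in a lower norm together with a uniform top-order bound (so that the limit is merely in $L^\infty_T H^s$ a~priori); here that generality is not needed, which is why your shortcut works.
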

\begin{proof}
As in the proof of the uniform estimate, we have
\begin{equation*}
	\| u^\delta \|_{H^s_{qp}} \lesssim \| u_0 \|_{H^s_{qp}} + ((\| u_0 \|_{H^s_{qp}})^{-1} - Ct)^{-1}.
\end{equation*}
Letting $\delta \to \infty$ we have
\begin{equation*}
	\| u \|_{H^s_{qp}} \lesssim \| u_0 \|_{H^s_{qp}} + ((\| u_0 \|_{H^s_{qp}})^{-1} - Ct)^{-1}.
\end{equation*}
Now, let $t_n \to 0^+$. The apriori estimate gives  
\begin{equation*}
	\| u(t_n) \|_{H^s_{qp}} \lesssim \| u_0 \|_{H^s_{qp}} + ((\| u_0 \|_{H^s_{qp}})^{-1} - Ct_n)^{-1}.
\end{equation*}
Taking the limit supremum $n \to \infty$, we obtain
\begin{equation*}
	\limsup_{n \to \infty}\| u(t_n) \|_{H^s_{qp}} \lesssim \| u_0 \|_{H^s_{qp}} + \limsup_{n \to \infty} ((\| u_0 \|_{H^s_{qp}})^{-1} - Ct_n)^{-1} =\| u_0 \|_{H^s_{qp}} + \| u_0 \|_{H^s_{qp}}.
\end{equation*}
Now, weak continuity of $u$ in $H^s$ implies that $u(t_n) \rightharpoonup u(0).$ 
By lower semi-continuity of the norm we have 
\[\| u_0 \|_{H^s_{qp}} \leq \liminf_{n \to \infty} \| u(t_n) \|_{H^s_{qp}}.\]
Thus, we obtain
\[ 
	\| u_0 \|_{H^s_{qp}} = \lim_{n \to \infty} \| u(t_n) \|_{H^s_{qp}}
\]
This shows right-continuity at $t = 0^+$.

For any $T^* \in (0,T)$, we interpret $T^*$ as a new initial time. 
We can repeat our arguments for existence and uniqueness of solutions to the regularized Benjamin-Ono equation starting at a new time $T^*.$
We find solutions of the original Benjamin-Ono equation on some time interval around $T^*$. 
By the above argument, these solutions are right-continuous at $t = T^*$. 
By uniqueness, solutions starting at $t = T^{*}$ and solutions starting at $t = 0$ must be the same. 
This is true for any $T^* \in (0,T)$ so the solutions starting at $t=0$ must be right continuous on $(0,T)$.

For left continuity, note that the Benjamin-Ono equation is invariant under the change of variables $(t,x) \mapsto (-t,-x)$. 
From this one obtains left continuity on $(0,T)$.
We conclude that the norm $\|u(t) \|_{H^s_{qp}}$ is continuous in time.
\end{proof}

\begin{rmk}\label{ContinuityInInitialDataForRegEquation}
Let $\eta > 0$ be given. 
By the Cauchy estimate in Proposition \eqref{HsCauchyEstimate}, there exists $N > 0$ such that for all $\delta > N,$ uniformly in $u_0 \in \mathcal{K},$ we have
\[ \sup_{t \in [0,T] } \| u^{\delta} -u \|_{H^s_{qp}} < \eta, \]
where $u^{\delta}$ and $u$ solve the regularized and the original Benjamin-Ono equation. 
The uniformity in $u_0$ comes from the fact the rate of decay is uniform for $\mathcal{K}$ compact.
\end{rmk}

\begin{prop}[Continuity in initial data in $H^{s}_{qp}$] 
Let $s > N/2 + 1.$ 
Let $u_0 \in H^s_{qp}$ and suppose $\{ u_n \}_{n=1}^{\infty}$ is a sequence such that $u_n \to u_0$ in $H^s_{qp}.$
Let $T$ be the common time of existence for solutions of Benjamin-Ono equation with initial data $\{ u_n \}_{n=0}^{\infty}$, which can be taken independent of $n$.
Let $U_n \in C([0,T]; H^s_{qp})$ be the solution of the Benjamin-Ono equation with initial data $u_n$ for $n= 0, 1, 2, \ldots$.
Then,
\[ \lim_{n\to \infty} \sup_{t \in [0,T]} \| U_n - U_0 \|_{H^s_{qp}} = 0.\]
\end{prop}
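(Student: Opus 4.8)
The plan is to run the Bona--Smith continuity scheme, inserting between $U_n$ and $U_0$ the regularized solutions out of which they were built. Put $\mathcal{K} = \{u_n\}_{n\geq 1}\cup\{u_0\}$; since $u_n\to u_0$ in $H^s_{qp}$ this set is compact, so by Corollary \ref{UniformBound} all the solutions $U_n$ live on a common interval $[0,T]$ depending only on $\mathcal{K}$, and the rates of convergence in the data-regularization estimates \eqref{RD2}--\eqref{RD3} are uniform over $\mathcal{K}$. For $\delta>1$ let $U_n^\delta$ denote the solution of the regularized equation \eqref{LWPeq1} with cutoff parameter $n_\delta := \exp\!\big(\tfrac{s}{s-2-\epsilon s}\log\delta\big)$ and initial datum $(u_n)_\delta$, exactly as in the existence proof; recall that this choice forces $\delta = n_\delta^{(s-2)/s-\epsilon}$, and that $U_n^\delta\to U_n$ in $L^\infty([0,T];H^s_{qp})$ as $\delta\to\infty$.

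Fix $\eta>0$. For every $n$ and every $t\in[0,T]$,
\[
\|U_n - U_0\|_{H^s_{qp}} \leq \|U_n - U_n^\delta\|_{H^s_{qp}} + \|U_n^\delta - U_0^\delta\|_{H^s_{qp}} + \|U_0^\delta - U_0\|_{H^s_{qp}}.
\]
Remark \ref{ContinuityInInitialDataForRegEquation} (which is a consequence of the Cauchy estimate of Proposition \ref{HsCauchyEstimate}) furnishes a threshold $\delta_1$ such that $\delta>\delta_1$ makes the first and third terms $<\eta/3$, uniformly in $t$ and in $n$ (including $n=0$) --- this is where compactness of $\mathcal{K}$ enters. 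For the middle term we invoke Proposition \ref{HsContData} with both cutoff parameters taken equal to $n_\delta$, so that its hypotheses $m\geq n$ and $\delta\leq n^{(s-2)/s-\epsilon}$ hold with equality: with target $\eta/3$ it provides a threshold $\delta_2$ such that, for every $\delta>\delta_2$, there is $\tilde\eta=\tilde\eta(\delta,\mathcal{K})>0$ for which $\|u_n-u_0\|_{H^s_{qp}}<\tilde\eta$ implies $\sup_{t\in[0,T]}\|U_n^\delta - U_0^\delta\|_{H^s_{qp}}<\eta/3$.

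Now fix $\delta>\max\{\delta_1,\delta_2\}$, which also fixes $n_\delta$ and the number $\tilde\eta>0$. Since $u_n\to u_0$ in $H^s_{qp}$, there is $M$ with $\|u_n-u_0\|_{H^s_{qp}}<\tilde\eta$ for all $n>M$, and for such $n$ all three terms on the right are $<\eta/3$, whence $\sup_{t\in[0,T]}\|U_n - U_0\|_{H^s_{qp}}<\eta$. As $\eta$ was arbitrary this gives the claim. The proof is mostly bookkeeping once the regularized-problem results are available; the one point that needs care is that a single parameter $\delta$ must be chosen large enough both to kill the two regularization-error terms and to make the continuity-in-data estimate for the regularized equation effective, which is possible only because all three of these estimates improve as $\delta\to\infty$ and are uniform over the compact set $\mathcal{K}$.
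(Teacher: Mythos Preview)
Your proof is correct and follows the same Bona--Smith three-term decomposition as the paper, invoking Remark \ref{ContinuityInInitialDataForRegEquation} for the outer terms and Proposition \ref{HsContData} for the middle one. In fact you are more explicit than the paper about the order of quantifiers --- first fixing $\delta>\max\{\delta_1,\delta_2\}$, then extracting $\tilde\eta$, then choosing $M$ so that $\|u_n-u_0\|<\tilde\eta$ for $n>M$ --- which is exactly the care the argument requires.
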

\begin{proof}
Let $\eta > 0$. Given any $\delta > 1$, write 
\[
	U_n - U_0 = (U_n - (U_n)^\delta) + ((U_n)^\delta - (U_0)^\delta) + ((U_n)^\delta - U_0),
\]
where $(U_n)^\delta$ is the solution of the regularized Benjamin-Ono equation with initial data $u_n$ and the regularizing parameter $\delta$.

It is clear that the set $\{ u_n \}_{n=1}^{\infty} \cup u_0$ is a compact subset of $H^s_{qp}$.
Therefore, per Remark \ref{ContinuityInInitialDataForRegEquation}, we can pick $\delta>0$ such that for all $n = 0, 1, 2, \ldots$, 
we have 
\[ 
	\sup_{t \in [0,T] } \| (U_n)^\delta - U_n\|_{H^s_{qp}} < \frac{\eta}{3}.
\]
It remains to deal with the difference $(U_n)^\delta - (U_0)^\delta$. 
This follows from Proposition \ref{HsContData}: we can pick $\delta$ sufficiently big so that  
\[ \| (U_n)^\delta - (U_0)^\delta \|_{H^s_{qp}} \leq \frac{\eta}{3}. \]
This completes the proof.
\end{proof}

\section{Conservation Laws}
In this section, we explain the challenges of extending local time of existence. 
In the case of periodic or decaying data, this is usually solved by means of conserved quantities.
These quantities, also known as conservation laws, allow to control the Sobolev norms when data is periodic or decaying.
Once local solutions are obtained, these quantities allow one to conclude that the Sobolev norms are bounded on any time interval.
In return, this allows to conclude that the local solutions exist on any time interval. 
The first four conservation laws are
\begin{itemize}
	\item mass: 
\begin{equation}
	 \int u \D x.
\end{equation}
\item momentum: 
\begin{equation}
	\int u^2 \D x,
\end{equation}
\item energy: 
\begin{equation}\label{CQenergy}
	\int \frac{u^3}{3} + u Hu_x \D x, 
\end{equation}
\item the conservation law for the $H^1$ norm:
\begin{equation}
	\int \frac{u^4}{4} + \frac{3}{2} u^2 Hu_x - \frac{3}{2} u_x^2 \D x.
\end{equation}
\end{itemize}

When adapted to the case of quasiperiodic data, the conserved quantities still hold.
However, we show below that conservation laws no longer seem to control the quasiperiodic Sobolev norms.
For the reader's convenience, we illustrate how the conservation laws are used by considering the energy \eqref{CQenergy} of Benjamin-Ono equation, with decaying initial data.

Since \eqref{CQenergy} is conserved, it is equal to some number, say $\alpha$, that only depends on $\|u_0\|_{H^{1/2}}$ and not $t$.
Now, note that the nonhomogenous part of the $H^{1/2}$ norm for the decaying data can be written as $\| u \|^2_{\dot{H}^{1/2}} =  \int u Hu_x \D x$.
Thus, $\| u \|^2_{\dot{H}^{1/2}} = \alpha - \int \frac{u^3}{3} \D x$. 
Therefore we have 
\[  
	\| u \|^2_{\dot{H}^{1/2}} \lesssim |\alpha| + \frac{1}{3}\|u \|_{L^3}^3.
\]
By Gagliardo-Nirenberg inequality, we know that $\|u \|_{L^3}^3$ is bounded by $\|u \|_{L^2}^2 \|\partial^{1/2}_x u \|_{L^2},$ i.e. $\|u \|_{L^3} \leq C \|u \|_{L^2}^2 \|\partial^{1/2}_x u \|_{L^2}$ for a fixed $C>0$. 
Applying Cauchy-Schwarz inequality then yields that 
\[
	\|u \|_{L^3}^3 \leq 2C^4 \|u \|_{L^2}^4 + \frac{1}{2}\|\partial^{1/2}_x u \|_{L^2}^2.
\]
We then have that 
\[
	\frac{1}{2}\| u \|^2_{\dot{H}^{1/2}} \leq |\alpha| + 2C^4 \|u \|_{L^2}^4 = |\alpha| + 2C^4 \|u_0 \|_{L^2}^4,
\]
where we used the conservation of $L^2$ norm. 
From here it follows that $\| u \|_{H^{1/2}}$ is bounded only by the norm of the initial data, and we obtain the desired control.

Now, we consider the conserved quantities in the case of quasiperiodic data. 
These conservation laws are listed below:
\begin{itemize}
	\item mass: 
\begin{equation}\label{CQ1}
	\quad \lim_{R \to \infty}\frac{1}{2R}\int^R_{-R} u \D x \text{ or } \hat{u}(0),
\end{equation}
\item momentum: 
\begin{equation}\label{CQ2}
	\quad \lim_{R \to \infty}\frac{1}{2R}\int^R_{-R} u^2 \D x \text{ or } \sum_{k \in \ZZ^N} \hat{u}(k) \hat{u}(-k),
\end{equation}
\item energy: 
\begin{equation}\label{CQ3} 
	\lim_{R \to \infty}\frac{1}{2R}\int^R_{-R} \frac{u^3}{3} + u Hu_x \D x 
	\text{ or } \sum_{k \in \ZZ^N} \frac{1}{3}\FT{u^2} (k) \hat{u}(-k) + \FT{Hu_x}(k)\hat{u}(-k),
\end{equation}
\item conservation law for the $H^1$ norm:
\begin{equation}\label{CQ4} 
	\begin{split}\lim_{R \to \infty}&\frac{1}{2R}\int^R_{-R} \frac{u^4}{4} + \frac{3}{2} u^2 Hu_x - \frac{3}{2} u_x^2 \D x 
	\\
&\text{or } \sum_{k\in\ZZ^N} \frac{1}{4} \FT{u^3}(k) \hat{u}(-k) + \frac{3}{2} \FT{u^2}(k) \FT{Hu_x}(-k) - \frac{3}{2} \hat{u_x}(k)\hat{u_x}(-k). \end{split}
\end{equation}
\end{itemize}
Before we prove the conservation laws, let us show why the laws do not seem to control the quasiperiodic Sobolev norm.
A crucial step in the example above is the Gagliardo-Nirenberg inequality. 
It is thus natural to ask if a quasiperiodic version of this inequality holds.
In contrast to decaying and periodic spaces, it appears to false, by a simple scaling argument \cite{JasonZhaoPrivate}.

\begin{rmk}[Failure of the Gagliardo-Nirenberg inequality]
Define a mean value type norm ${\mathcal{L}^p}$ by 
\[
\| u \|_{\mathcal{L}^p}^p = \lim_{R \to \infty} \frac{1}{2R} \int_{-R}^R |u|^p \D x.
\]
These norms are invariant under rescaling $u_{\lambda}(x) = u(x/\lambda),$ i.e. $\| u \|_{\mathcal{L}^p} = \| u_\lambda \|_{\mathcal{L}^p}$ for any $\lambda > 0$.
Furthermore, it is straightforward to check that $\widehat{D^s_x u_\lambda}(k) =\lambda^{-s}\widehat{D^s_x u}(\lambda k)$ and that $D^s_x u_\lambda(x) = \lambda^{-s} D^s_x u (x/\lambda)$ for any $\lambda > 0$.
It then follows that $\| D^s_x u_\lambda \|_{\mathcal{L}^p} = \lambda^{-s} \| D^s_x u \|_{\mathcal{L}^p}.$
As a result, if any inequality of the form 
\[ \| u \|_{\mathcal{L}^q} \lesssim \| u \|_{\mathcal{L}^r}^{1-\theta} \| D^s_x u \|_{\mathcal{L}^p}^{\theta}, \qquad s > 0, \frac{1}{q} = \theta \left( \frac{1}{p} - \frac{1}{2}\right) + (1-\theta)\frac{1}{r} \]
is true, rescaling would imply 
\[ \| u_\lambda \|_{\mathcal{L}^q} \lesssim  \lambda^{-s\theta} \| u_\lambda \|_{\mathcal{L}^r}^{1-\theta} \| D^s_x u \|_{\mathcal{L}^p}^{\theta}, \qquad s > 0 \]
Taking $\lambda \to \infty$ in the last inequality would imply $\| u_\lambda \|_{\mathcal{L}^q} \to 0,$ which of course need not be true.
Therefore, the Gagliardo-Nirenberg inequality fails to hold for quasiperiodic integrals.
\end{rmk}

Finally, we would like to prove conservation laws. 
We emphasize that the quantities can be shown to be conserved via two ways: by directly taking time derivatives of averaged integrals, or by switching to Fourier series.
To illustrate our point, we prove conservation of the first three laws in Fourier space, whereas the fourth one we do directly.

Since the solutions here are taken to be smooth, so that by the equation $u_t$ is continuous and thus we are allowed to exchange time derivatives when dealing with $u(t,x)$ and $\hat{u}(t,k)$.
In what follows, we will consistently use Lemma \ref{Appendix1Id1}:
\begin{lem}\label{Appendix1Id1}
For quasiperiodic $f,g,h$, 
\[
	\sum_{k \in \ZZ^N} \FT{fg}(k) \FT{h}(-k) = \sum_{k \in \ZZ^N} \FT{f}(k) \FT{gh}(-k) = \sum_{k \in \ZZ^N} \FT{g}(k) \FT{fh}(-k).
\]
\end{lem}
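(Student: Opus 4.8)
The plan is to reduce all three quantities to one and the same symmetric sum over triples of frequencies adding to zero. First I would invoke the convolution theorem --- already used in the proof of Proposition \ref{FractionalLeibniz} --- which gives $\FT{fg}(k) = \sum_{j \in \ZZ^N} \hat f(j)\, \hat g(k-j)$ for quasiperiodic $f,g$. Substituting this into the left-most sum and interchanging the order of summation yields
\[
\sum_{k \in \ZZ^N} \FT{fg}(k) \FT{h}(-k) = \sum_{k \in \ZZ^N} \sum_{j \in \ZZ^N} \hat f(j)\, \hat g(k-j)\, \hat h(-k).
\]
Relabelling $a = j$, $b = k-j$, $c = -k$, for which $a + b + c = 0$, rewrites the right-hand side as the fully symmetric expression
\[
\sum_{\substack{a,b,c \in \ZZ^N \\ a+b+c = 0}} \hat f(a)\, \hat g(b)\, \hat h(c).
\]

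Next I would carry out the identical manipulation for the other two sums. Using $\FT{gh}(-k) = \sum_{j \in \ZZ^N} \hat g(j)\, \hat h(-k-j)$ and setting $a = k$, $b = j$, $c = -k-j$ gives
\[
\sum_{k \in \ZZ^N} \FT{f}(k) \FT{gh}(-k) = \sum_{\substack{a,b,c \in \ZZ^N \\ a+b+c = 0}} \hat f(a)\, \hat g(b)\, \hat h(c),
\]
and the same computation with $\FT{fh}(-k) = \sum_{j \in \ZZ^N} \hat f(j)\, \hat h(-k-j)$ gives
\[
\sum_{k \in \ZZ^N} \FT{g}(k) \FT{fh}(-k) = \sum_{\substack{a,b,c \in \ZZ^N \\ a+b+c = 0}} \hat f(a)\, \hat g(b)\, \hat h(c).
\]
Since all three sums equal the common symmetric value, the lemma follows.

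The only point requiring care --- and the step I would treat as the main obstacle --- is justifying the interchange of the order of summation, i.e.\ the absolute convergence of the double series. As this lemma is applied in the paper only to smooth quasiperiodic functions, whose Fourier coefficients are rapidly decreasing, one has $\sum_{j,k} |\hat f(j)|\,|\hat g(k-j)|\,|\hat h(-k)| \leq \|\hat f\|_{\ell^1}\,\|\hat g\|_{\ell^1}\,\|\hat h\|_{\ell^1} < \infty$ (summing first in $j$); more generally, the Sobolev inequality (Proposition \ref{SobolevInequality}) provides the needed $\ell^1$ summability of coefficients whenever $f,g,h \in H^s_{qp}$ with $s > N/2$. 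With absolute convergence established, Fubini's theorem legitimizes both the reordering and the reindexing, and the argument above goes through verbatim.
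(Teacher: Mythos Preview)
Your proof is correct and follows essentially the same route as the paper: both expand the product via the convolution theorem and reindex. Your presentation makes the underlying symmetry explicit by passing through the triple sum $\sum_{a+b+c=0}\hat f(a)\hat g(b)\hat h(c)$, whereas the paper interchanges the sums and directly recognizes the inner sum as $\FT{gh}(-j)$; you also supply the Fubini justification that the paper leaves implicit.
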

For a proof see the appendix.
\begin{proof}[Proof of \eqref{CQ1}]
This is conservation of mean. Write
\[
	\frac{\partial}{\partial t} \hat{u}(k)	= \FT{uu_x}(k) + \FT{Hu_{xx}}(k).
\]
Consider the nonlinear term: 
\[
	\FT{uu_x}(k) = \frac{1}{2} \FT{\partial_x (u^2)}(k) = \frac{1}{2} \FT{\partial_x (u^2)}(k) = \frac{1}{2} \alpha \cdot k \FT{u^2}(k),
\]
so that at $k=0$ we obtain 
\[ 
	\FT{uu_x}(0) = \frac{1}{2} \alpha \cdot 0 \FT{u^2}(0) = 0.
\]
As for the linear term, note
\[ 
	\FT{Hu_{xx}}(k) = i^2 (\alpha \cdot k)^2 (-i \sgn(\alpha \cdot k)) \hat{u}(k).
\]
Clearly, this term vanishes at $k=0.$ Thus, we obtain
\[
	\frac{\partial}{\partial t} \hat{u}(0)	= 0,
\]
so that $\hat{u}(0) = \hat{u_0}(0).$
\end{proof}
\begin{proof}[Proof of \eqref{CQ2}]
Take the time derivative and use the equation:
\begin{align*}
	\frac{\partial}{\partial t}\sum_{k \in \ZZ^N} \hat{u}(k) \hat{u}(-k) &= 2 \sum_{k \in \ZZ^N} \hat{u_t}(k) \hat{u}(-k) \\
	&= 2\sum_{k \in \ZZ^N} \hat{uu_x}(k) \hat{u}(-k) + 2\sum_{k \in \ZZ^N} \hat{Hu_{xx}}(k) \hat{u}(-k).
\end{align*}
The linear term vanishes following the argument in the case of uniform estimate.
As for the nonlinear term, see Appendix (Lemma \ref{Appendix2Id1}, $n=2$) for a detailed proof that this term vanishes.
Thus, we conclude that
\[
	\frac{\partial}{\partial t}\sum_{k \in \ZZ^N} \hat{u}(k) \hat{u}(-k) = 0,
\]
so that 
\[
	\sum_{k \in \ZZ^N} \hat{u}(k) \hat{u}(-k) = \sum_{k \in \ZZ^N} \hat{u_0}(k) \hat{u_0}(-k) = \|u_0\|^2_{L^2_{qp}}.
\]
\end{proof}
\begin{proof}[Proof of \eqref{CQ3}]
Take the time derivative and use chain rule:
\begin{align*}
	\frac{\partial}{\partial t} \sum_{k \in \ZZ^N} &\frac{1}{3}\hat{u^2} (k) \hat{u}(-k) + \FT{Hu_x}(k)\hat{u}(-k) \\
	&= \sum_{k \in \ZZ^N} \frac{1}{3}[\FT{(u^2)_t} (k) \hat{u}(-k) + \hat{u^2} (k) \hat{u_t}(-k)] + \FT{H(u_t)_x}(k)\hat{u}(-k) + \FT{Hu_x}(k)\hat{u_t}(-k) \\
	&= \sum_{k \in \ZZ^N} \frac{1}{3}[2\FT{uu_t} (k) \hat{u}(-k) + \hat{u^2} (k) \hat{u_t}(-k)] + \FT{u_t}(k)\hat{Hu_x}(-k) + \FT{Hu_x}(k)\hat{u_t}(-k)
\end{align*}
Since 
\[ 
	\sum_{k \in \ZZ^N} \FT{uu_t} (k) \hat{u}(-k) = \sum_{k \in \ZZ^N}\hat{u_t} (k) \hat{u^2}(-k),
\]
we obtain that the time derivative equals
\[
	\sum_{k \in \ZZ^N} \hat{u_t}(k) \hat{u^2} (-k) + 2 \FT{u_t}(k)\FT{Hu_x}(-k).
\]
Now, use the equation:
\[
	\sum_{k \in \ZZ^N} (\FT{Hu_{xx}}(k) + \FT{uu_x}(k)) \hat{u^2} (-k) + 2 (\FT{Hu_{xx}}(k) + \FT{uu_x}(k)) \FT{Hu_x}(-k).
\]
Clearly, the term 
\[
	\sum_{k \in \ZZ^N} \FT{Hu_{xx}}(k) \FT{Hu_x}(-k) = 0,
\]
and as shown in the Appendix (Lemma \ref{Appendix2Id1}, $n=3$), 
\[
	\sum_{k \in \ZZ^N} \FT{uu_x}(k) \hat{u^2} (-k) = 0.
\]
Thus, we are left with 
\[
	\sum_{k \in \ZZ^N} \FT{Hu_{xx}}(k)\hat{u^2} (-k) + 2 \FT{uu_x}(k) \FT{Hu_x}(-k).
\]
Going back to chain rule and integration by parts yields
\[
	\sum_{k \in \ZZ^N} \FT{Hu_{xx}}(k)\hat{u^2} (-k) = -\sum_{k \in \ZZ^N} \FT{Hu_{x}}(k)\FT{(u^2)_x} (-k) = -2 \sum_{k \in \ZZ^N} \FT{Hu_{x}}(k)\FT{uu_x} (-k),
\]
so that 
\[
	\sum_{k \in \ZZ^N} \FT{Hu_{xx}}(k)\hat{u^2} (-k) + 2 \FT{uu_x}(k) \FT{Hu_x}(-k) = 0.	
\]
Thus, the term in question is conserved.
\end{proof}

\begin{proof}[Proof of \eqref{CQ4}]
We will use the following identity, called Cotlar's identity:
\[
f^2 = (Hf)^2 - 2 H(f Hf),
\]
where $f \in H^{s}_{qp}$.
Take the time derivative:
\begin{align}
	\frac{\partial}{\partial t} &\lim_{R \to \infty}\frac{1}{2R}\int^R_{-R} \frac{u^4}{4} + \frac{3}{2} u^2 Hu_x + 2u_x^2 \D x =\\
	\lim_{R \to \infty}\frac{1}{2R}\int^R_{-R} & u^3 u_t + \frac{3}{2} ( 2 uu_t Hu_x + u^2 H\partial_x u_t) + 4 u_x u_{tx} \D x.
\end{align}
Using that $H \partial_x$ is self-adjoint and integration by parts yields
\[
	\lim_{R \to \infty}\frac{1}{2R}\int^R_{-R} u^3 u_t + \frac{3}{2} ( 2 uu_t Hu_x + H\partial_x(u^2) u_t) -4 u_{xx} u_{t} \D x.
\]
This simplifies to 
\[
	\lim_{R \to \infty}\frac{1}{2R}\int^R_{-R} u^3 u_t + 3 (uu_t Hu_x + H(uu_x) u_t) -4 u_{xx} u_{t} \D x.
\]
Now, use the equation: 
\[
	\lim_{R \to \infty}\frac{1}{2R}\int^R_{-R} u^3 (uu_x + Hu_{xx}) + 3 (u Hu_x + H(uu_x))(uu_x + Hu_{xx}) -4 u_{xx} (uu_x + Hu_{xx}) \D x.
\]
For now we will assume that the three terms 
\[
	\lim_{R \to \infty}\frac{1}{2R}\int^R_{-R} u^4 u_x \D x, \quad \lim_{R \to \infty}\frac{1}{2R}\int^R_{-R} H(uu_x)(uu_x) \D x, \quad \lim_{R \to \infty}\frac{1}{2R}\int^R_{-R} H(u_{xx})u_{xx} \D x,
\]
vanish. As a result, we obtain
\[
	\lim_{R \to \infty}\frac{1}{2R}\int^R_{-R} u^3 Hu_{xx} + 3 u^2 u_x Hu_x + 3 u Hu_x Hu_{xx} + 3 Hu_{xx}H(uu_x) -4 u_{xx} uu_x \D x.
\]

Applying integration by parts on $u^3Hu_{xx}$ cancels out $3u^2 u_x Hu_{x}$, so that we are left with
\[
	\lim_{R \to \infty}\frac{1}{2R}\int^R_{-R} 3u Hu_x Hu_{xx} + 3H(uu_x)Hu_{xx} -4 u_{xx} uu_x \D x.
\]
Using that the Hilbert transform is unitary, we obtain
\[
	\lim_{R \to \infty}\frac{1}{2R}\int^R_{-R} 3u Hu_x Hu_{xx} + 3uu_x u_{xx} - 4u_{xx} uu_x \D x = \lim_{R \to \infty}\frac{1}{2R}\int^R_{-R} 3u Hu_x Hu_{xx} - u_{xx} uu_x \D x.
\]
Integration by parts yields that
\[
	\lim_{R \to \infty}\frac{1}{2R}\int^R_{-R} -\frac{3}{2} u_x (Hu_x)^2 + \frac{1}{2} u_x (u_x)^2 \D x = \lim_{R \to \infty}\frac{1}{2R}\int^R_{-R} (-\frac{3}{2} (Hu_x)^2 + \frac{1}{2}(u_x)^2 ) u_x \D x
\]
Now, use Cotlar's identity with $f = u_x$ to rewrite the integrand as 
\[ 
	-\frac{3}{2} (Hu_x)^2 + \frac{1}{2}(Hu_x)^2 - H(u_x Hu_x) = - (Hu_x)^2 - H(u_x Hu_x),
\]
to obtain 
\[
\lim_{R \to \infty}\frac{1}{2R}\int^R_{-R} (- (Hu_x)^2 - H(u_x Hu_x))u_x \D x.
\]
Finally, using the anti-self-adjointness of Hilbert transform, 
\[
	\lim_{R \to \infty}\frac{1}{2R}\int^R_{-R} H(u_x Hu_x))u_x \D x = -\lim_{R \to \infty}\frac{1}{2R}\int^R_{-R} u_x Hu_x Hu_x \D x = - \lim_{R \to \infty}\frac{1}{2R}\int^R_{-R} u_x (Hu_x)^2 \D x.
\]
Combining, we obtain
\[
	\lim_{R \to \infty}\frac{1}{2R}\int^R_{-R} (-(Hu_x)^2 -  H(u_x Hu_x))u_x \D x = \lim_{R \to \infty}\frac{1}{2R}\int^R_{-R} (-(Hu_x)^2 + (Hu_x)^2)u_x \D x = 0,
\]
as desired.
\end{proof}

\appendix 
\section{Various results}
\begin{lem}
For quasiperiodic $f,g,h$, 
\[
	\sum_{k \in \ZZ^N} \FT{fg}(k) \FT{h}(-k) = \sum_{k \in \ZZ^N} \FT{f}(k) \FT{gh}(-k) = \sum_{k \in \ZZ^N} \FT{g}(k) \FT{fh}(-k).
\]
\end{lem}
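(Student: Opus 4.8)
The plan is to rewrite each of the three sums as one and the same symmetric double sum over $\ZZ^N\times\ZZ^N$ by invoking the convolution theorem, and then to read off the equalities from the symmetry of that sum in the roles of $f$, $g$, $h$.

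First I would recall the convolution identity $\FT{fg}(k)=\sum_{j\in\ZZ^N}\FT{f}(j)\FT{g}(k-j)$ — the same fact used, for instance, in the proof of Proposition \ref{FractionalLeibniz}. Substituting this into the left-most expression and reindexing by $m:=k-j$ (so that $k=j+m$) gives
\[
\sum_{k\in\ZZ^N}\FT{fg}(k)\FT{h}(-k)=\sum_{k\in\ZZ^N}\sum_{j\in\ZZ^N}\FT{f}(j)\FT{g}(k-j)\FT{h}(-k)=\sum_{j\in\ZZ^N}\sum_{m\in\ZZ^N}\FT{f}(j)\FT{g}(m)\FT{h}(-j-m).
\]
Carrying out the analogous computation on $\sum_{k}\FT{f}(k)\FT{gh}(-k)$, this time expanding $\FT{gh}(-k)=\sum_{m}\FT{g}(m)\FT{h}(-k-m)$ and relabeling $j:=k$, produces exactly the same double sum $\sum_{j,m}\FT{f}(j)\FT{g}(m)\FT{h}(-j-m)$; and the same is true of $\sum_{k}\FT{g}(k)\FT{fh}(-k)$ after the harmless interchange $j\leftrightarrow m$. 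Since all three quantities equal $\sum_{j,m\in\ZZ^N}\FT{f}(j)\FT{g}(m)\FT{h}(-j-m)$, they are equal to one another.

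The only point that requires care — and it is the main (though mild) obstacle — is the legitimacy of interchanging the two summations and of the change of variables, which both hold once the double sum converges absolutely. For this I would appeal to the Sobolev bound of Proposition \ref{SobolevInequality}: in every application the quasiperiodic functions involved lie in $H^{s}_{qp}$ with $s>N/2$ (indeed the solutions are smooth), so their Fourier coefficients are absolutely summable, $\sum_{k}|\FT{f}(k)|<\infty$ and likewise for $g$ and $h$. Consequently
\[
\sum_{j,m\in\ZZ^N}\bigl|\FT{f}(j)\bigr|\,\bigl|\FT{g}(m)\bigr|\,\bigl|\FT{h}(-j-m)\bigr|\le\Bigl(\sum_{j}\bigl|\FT{f}(j)\bigr|\Bigr)\Bigl(\sum_{m}\bigl|\FT{g}(m)\bigr|\Bigr)\sup_{k}\bigl|\FT{h}(k)\bigr|<\infty,
\]
so Fubini's theorem for series applies and every rearrangement above is justified. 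With absolute convergence in hand the combinatorial identity is immediate, and the proof is complete.
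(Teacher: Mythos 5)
Your proof is correct and follows essentially the same route as the paper's: expand one factor via the convolution theorem, interchange the order of summation, and recognize the resulting inner sum as a convolution (your symmetric double sum $\sum_{j,m}\FT{f}(j)\FT{g}(m)\FT{h}(-j-m)$ is just a tidier packaging of that step). Your explicit justification of the interchange via absolute summability of the Fourier coefficients is a welcome addition that the paper leaves implicit.
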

\begin{proof}
Writing a product as a convolution, we obtain
\begin{align*}
	\sum_{k \in \ZZ^N} \FT{fg}(k) \FT{h}(-k) &= \sum_{k \in \ZZ^N} \sum_{j \in \ZZ^N} \FT{f}(j) \FT{g}(k-j) \FT{h}(-k) \\
	&= \sum_{j \in \ZZ^N} \FT{f}(j) \sum_{k \in \ZZ^N} \FT{g}(k-j) \FT{h}(-k) \\
	&= \sum_{j \in \ZZ^N} \FT{f}(j) \FT{gh}(-j) \\
	&=\sum_{k \in \ZZ^N} \FT{f}(k) \FT{gh}(-k).
\end{align*}
On the third line, we interchange the summation and recognize a convolution in $k$. 
Note that at the end, we have relabeled the index of summation.
The second identity follows similarly from using 
\[
	\FT{fg}(k) = \sum_{j\in\ZZ^N} \FT{f}(k-j) \FT{g}(j).
\]
Thus we obtain the identities in question.
\end{proof}

\begin{lem}\label{Appendix2Id1} For any natural $n \geq 0$,
we have:
\[ \sum_{k \in \ZZ^N} \widehat{u^n}(k) \widehat{u_x}(-k) = 0.\]
\end{lem}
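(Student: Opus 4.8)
The plan is to reduce the identity to the elementary fact that the zeroth Fourier coefficient of a total $x$-derivative vanishes. First I would record the pointwise-in-frequency consequence of the chain rule: for every integer $n\ge 0$ one has $u^n u_x = \tfrac{1}{n+1}\,\partial_x\!\left(u^{n+1}\right)$, so that, taking Fourier coefficients of both sides and using $\widehat{\partial_x v}(k) = i(\alpha\cdot k)\hat v(k)$,
\[
\widehat{u^n u_x}(k) = \frac{i}{n+1}\,(\alpha\cdot k)\,\widehat{u^{n+1}}(k), \qquad k \in \ZZ^N.
\]
For $n=0$ this is just the definition $\widehat{u_x}(k)=i(\alpha\cdot k)\hat u(k)$; for $n\ge 1$ it is legitimate precisely because $u^{n+1}\in H^s_{qp}$ by the algebra property (and in the applications $u$ is smooth), so the right-hand side is a bona fide Fourier coefficient.

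Next I would rewrite the left-hand sum as a single Fourier coefficient. Writing the product $u^n\cdot u_x$ as a convolution of the two Fourier series, exactly as in the proof of Lemma \ref{Appendix1Id1}, and using that $k\mapsto \alpha\cdot k$ is injective on $\ZZ^N$ by the non-resonance hypothesis, one gets the Fourier coefficient of the product at frequency zero:
\[
\sum_{k\in\ZZ^N}\widehat{u^n}(k)\,\widehat{u_x}(-k) \;=\; \sum_{j\in\ZZ^N}\widehat{u^n}(j)\,\widehat{u_x}(0-j) \;=\; \widehat{u^n u_x}(0).
\]
Combining this with the displayed identity and evaluating at $k=0$ yields
\[
\sum_{k\in\ZZ^N}\widehat{u^n}(k)\,\widehat{u_x}(-k) \;=\; \widehat{u^n u_x}(0) \;=\; \frac{i}{n+1}\,(\alpha\cdot 0)\,\widehat{u^{n+1}}(0) \;=\; 0,
\]
which is the claim. (Alternatively, and equivalently, one could note that the sum equals the averaged integral $\lim_{R\to\infty}\frac{1}{2R}\int_{-R}^{R} u^n u_x\,\D x = \lim_{R\to\infty}\frac{1}{2R}\cdot\frac{u^{n+1}(R)-u^{n+1}(-R)}{n+1}=0$ since $u^{n+1}$ is bounded.)

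The only point that genuinely needs a word of care — the "main obstacle", such as it is — is ensuring that all series in sight converge absolutely so that the rearrangement of the product into a convolution is justified and the chain-rule identity may be differentiated termwise; this is immediate once $u$, hence $u^{n+1}$ and $u_x$, lies in a quasiperiodic Sobolev space of index above $N/2$, by Proposition \ref{SobolevInequality} and the algebra property, which is the standing setting in which this lemma is invoked.
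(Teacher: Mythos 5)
Your argument is correct and is essentially the paper's own proof: both rewrite the sum as the convolution $\widehat{u^n u_x}(0)$, apply the chain rule $u^n u_x = \tfrac{1}{n+1}\partial_x(u^{n+1})$, and observe that the factor $\alpha\cdot k$ vanishes at $k=0$. The extra remarks on absolute convergence and the averaged-integral alternative are fine but not needed beyond what the paper already assumes.
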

\begin{proof}
Rewriting the sum as a convolution and using chain rule yields
\[ \sum_{k \in \ZZ^N} \widehat{u^n}(k) \widehat{u_x}(-k) = \FT{(u^n u_x)}(0) = \frac{1}{n+1}\FT{(u^{n+1})_x}(0) = \frac{1}{n+1} \Big( i\alpha \cdot k \FT{u^{n+1}}(k)\Big)\Big|_{k=0} = 0,
\]
so that we have the result.
\end{proof}
	
\begin{lem}\label{IntegrationByParts}
We have:
\[\sum_{k\in\ZZ^N} \widehat{v\partial_x u}(k)\hat{u}(-k)
= -\frac{1}{2} \sum_{k\in\ZZ^N} \widehat{v_x u}(k) \hat{u}(-k).
\]
\end{lem}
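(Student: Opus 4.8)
The plan is to reduce the claimed ``discrete integration by parts'' identity to two elementary facts: (i) for any quasiperiodic $g$ the zeroth Fourier coefficient of $\partial_x g$ vanishes, since $\FT{\partial_x g}(0)=i(\alpha\cdot 0)\hat g(0)=0$; and (ii) the convolution theorem at the zero mode, $\sum_{k\in\ZZ^N}\FT{F}(k)\FT{G}(-k)=\FT{FG}(0)$, which is the content of Lemma \ref{Appendix1Id1} (or is immediate from $\FT{FG}(0)=\sum_{j}\FT{F}(j)\FT{G}(-j)$).

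First I would rewrite both sides of the asserted identity as zero-mode Fourier coefficients of triple products. Applying (ii) with $F=v\,\partial_x u$ and $G=u$ gives
\[
\sum_{k\in\ZZ^N}\FT{v\,\partial_x u}(k)\hat u(-k)=\FT{\,v u\,\partial_x u\,}(0),
\]
and likewise with $F=v_x u$ and $G=u$,
\[
\sum_{k\in\ZZ^N}\FT{v_x u}(k)\hat u(-k)=\FT{\,v_x u^2\,}(0).
\]
Thus it suffices to prove the pointwise (in Fourier) identity $\FT{v u u_x}(0)=-\tfrac12\FT{v_x u^2}(0)$.

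For this, I would simply apply the product rule $\partial_x(v u^2)=v_x u^2+2\,v u u_x$, take the zeroth Fourier coefficient of both sides, and use fact (i) on the left, which kills the left-hand side entirely. This yields $\FT{v_x u^2}(0)+2\FT{v u u_x}(0)=0$, i.e.\ $\FT{v u u_x}(0)=-\tfrac12\FT{v_x u^2}(0)$; combining with the two displayed equalities above gives exactly the statement of the lemma.

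There is essentially no obstacle here beyond bookkeeping: the only point requiring a word of justification is that the interchanges of summation hidden in (ii) are legitimate and that the products $v u u_x$ and $v_x u^2$ are honest quasiperiodic functions with a well-defined zero mode. This is fine because the lemma is invoked for $u,v$ in $H^s_{qp}$ with $s$ large (in the applications $s>N/2+1$, so $v_x,u_x\in H^{s-1}_{qp}$ with $s-1>N/2$); by the Sobolev inequality (Proposition \ref{SobolevInequality}) all Fourier series in sight converge absolutely and Fubini applies. I would state this regularity caveat in one sentence and then carry out the three short steps above.
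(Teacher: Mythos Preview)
Your argument is correct and is essentially the same as the paper's: both use Lemma~\ref{Appendix1Id1} to recognize the sums as zero-mode Fourier coefficients of triple products, then exploit the chain/product rule together with $\FT{\partial_x g}(0)=0$. The only cosmetic difference is that the paper first groups $u\,\partial_x u=\tfrac12\partial_x(u^2)$ and then transfers the derivative onto $v$ via the Fourier symbol, whereas you apply the product rule to $vu^2$ in one stroke; the content is identical.
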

\begin{proof}
We use Lemma \ref{Appendix1Id1} to move $\partial_x u$ into the $\hat{u}(-k)$ term:
\begin{align*}
	\sum_{k\in\ZZ^N} \widehat{v \partial_x u}(k)\hat{u}(-k) &= \sum_{k\in\ZZ^N} \widehat{v}(k)\widehat{u \partial_x u}(-k).
\end{align*}
Recognizing the derivative term $u \partial_x u = \frac{1}{2} \partial_x(u^2)$, we then write 
\begin{align*}
	\sum_{k\in\ZZ^N} \widehat{v}(k)\widehat{u \partial_x u}(-k) &= \frac{1}{2} \sum_{k\in\ZZ^N} \widehat{v}(k)\widehat{\partial_x(u^2)}(-k) \\
	&= \frac{1}{2} \sum_{k\in\ZZ^N} (\alpha \cdot (-k))\widehat{v}(k)\widehat{u^2}(-k) \\
	&= - \frac{1}{2} \sum_{k\in\ZZ^N} (\alpha \cdot k)\widehat{v}(k)\widehat{u^2}(-k),
\end{align*}
where we rewrote the derivative in Fourier variables and pulled the minus sign out of the sum. 
The final term can be written as 
\[- \frac{1}{2} \sum_{k\in\ZZ^N}\widehat{v_x}(k)\widehat{u^2}(-k),
\]
and using Lemma \ref{Appendix1Id1}, we can shift one of $u$ inside the $\widehat{u^2}$ term into the $\widehat{v}$ term. 
We then obtain the desired identity.
\end{proof}

\begin{lem}[Cotlar's identity]\label{AppendixCotlarsId}
Let $f \in H^{s}_{qp}$. Then, 
\[
(Hf)^2 - f^2 = 2 H(f Hf).
\]
\end{lem}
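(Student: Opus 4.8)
The plan is to verify the identity one Fourier mode at a time, since every term appearing is quadratic in $f$ and its Fourier coefficients are explicit convolutions. Recalling $\FT{Hf}(k)=-i\sgn(\alpha\cdot k)\hat f(k)$ and $\FT{fg}(k)=\sum_{j\in\ZZ^N}\hat f(j)\hat g(k-j)$, I would first record, for every $k\in\ZZ^N$,
\[
\FT{(Hf)^2}(k)=-\sum_{j\in\ZZ^N}\sgn(\alpha\cdot j)\,\sgn(\alpha\cdot(k-j))\,\hat f(j)\hat f(k-j),\qquad \FT{f^2}(k)=\sum_{j\in\ZZ^N}\hat f(j)\hat f(k-j),
\]
and, after writing $fHf$ as a convolution and applying $H$,
\[
\FT{2H(fHf)}(k)=-2\sgn(\alpha\cdot k)\sum_{j\in\ZZ^N}\sgn(\alpha\cdot(k-j))\,\hat f(j)\hat f(k-j).
\]

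Next I would symmetrize the last sum: the substitution $j\mapsto k-j$ leaves it unchanged, so averaging the two forms gives
\[
\FT{2H(fHf)}(k)=-\sgn(\alpha\cdot k)\sum_{j\in\ZZ^N}\bigl[\sgn(\alpha\cdot j)+\sgn(\alpha\cdot(k-j))\bigr]\hat f(j)\hat f(k-j).
\]
Comparing with the formulas above, the identity reduces to the elementary pointwise claim that, writing $a=\alpha\cdot j$ and $b=\alpha\cdot(k-j)$ (so that $a+b=\alpha\cdot k$),
\[
\sgn(a+b)\bigl[\sgn a+\sgn b\bigr]=1+\sgn a\,\sgn b ,
\]
which I would check by running through the finitely many sign patterns of $(a,b)$, including the cases where exactly one of $a$, $b$, $a+b$ vanishes.

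The main (and essentially the only) obstacle is the fully degenerate term $a=b=0$, i.e.\ $j=k=0$, where the left side of the pointwise identity is $0$ while the right side is $1$. Because $\alpha\cdot k\neq0$ for all $k\in\dot{\ZZ}^N$, this happens for exactly one term, and it contributes $\hat f(0)^2$ to the zeroth Fourier coefficient; thus the argument yields $(Hf)^2-f^2=2H(fHf)$ precisely when $\hat f(0)=0$ — which covers every use of Cotlar's identity in the paper, since it is invoked only with $f=u_x$, for which $\hat f(0)=i(\alpha\cdot 0)\hat u(0)=0$ — while in general one obtains $(Hf)^2-f^2=2H(fHf)-\hat f(0)^2$. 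Finally, the rearrangement and symmetrization of the series is justified for $f\in H^s_{qp}$ with $s>N/2$: the algebra property together with Proposition \ref{SobolevInequality} ensures that $f^2$, $(Hf)^2$, and $fHf$ lie in $H^s_{qp}$ and have absolutely summable Fourier series, so all the double sums above converge absolutely and the interchanges are valid.
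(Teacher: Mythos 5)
Your proof follows the same route as the paper's: pass to Fourier coefficients, write everything as a convolution, and reduce the identity to a pointwise statement about signs. The one place you diverge is the place that matters. The paper's case analysis of its sign identity only treats ``$\alpha\cdot k$ and $\alpha\cdot j$ of the same sign'' and ``of opposite signs,'' silently omitting the cases where one of the signs vanishes; most of those degenerate cases happen to work out, but the fully degenerate term $j=k=0$ does not, exactly as you observed. Your conclusion is right and the lemma as stated is in fact false without a mean-zero hypothesis: taking $f\equiv 1$ (which lies in $H^s_{qp}$) gives $(Hf)^2-f^2=-1$ while $2H(fHf)=0$, consistent with your corrected identity $(Hf)^2-f^2=2H(fHf)-\hat f(0)^2$. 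Your further remark that this does not damage the paper is also correct: Cotlar's identity is invoked only in the proof of the fourth conservation law with $f=u_x$, for which $\hat f(0)=i(\alpha\cdot 0)\hat u(0)=0$. Your justification of the rearrangements via the algebra property and the $\ell^1$ summability of Fourier coefficients for $s>N/2$ is the right way to make the formal convolution manipulations rigorous. In short: same method as the paper, executed more carefully, and it surfaces a correction (add the hypothesis $\hat f(0)=0$ or the term $-\hat f(0)^2$) that the lemma genuinely needs.
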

\begin{proof}
This proof is a straightforward extension of a standard argument to the quasiperiodic setting.

Since $f \in H^s_{qp}$, we know that the terms $(Hf)^2$ and $H(f Hf)$ exist due to the algebra property. 
In particular, all the terms involved have their Fourier series expressions, so it is enough to prove the identity in Fourier space.

In Fourier space, the identity is given by
\begin{align*}
	\FT{(Hf)^2}(k) - \FT{f^2}(k) &= 2 \FT{H(fHf)} \\
	&= 2 (-i)\sgn (\alpha \cdot k) \FT{(f Hf)}(k) \\
	&= (-i)\sgn (\alpha \cdot k) \FT{(f Hf)}(k) + (-i) \sgn (\alpha \cdot k) \FT{(f Hf)}(k).
\end{align*} 
Using convolution theorem we obtain
\begin{align*}
	\sum_{j \in \ZZ^N} &\FT{Hf}(j) \FT{Hf}(k-j) - \hat{f}(j) \hat{f}(k-j) \\
	&= (-i)\sgn (\alpha \cdot k) \sum_{j \in \ZZ^N} \hat{f}(j) \FT{Hf}(k-j) + (-i)\sgn (\alpha \cdot k) \sum_{j \in \ZZ^N} \FT{Hf}(j) \hat{f}(k-j) 
\end{align*}
Using the definition of Hilbert transform results in
\begin{align*}
	\sum_{j \in \ZZ^N} &((-i)^2\sgn(\alpha \cdot j) \sgn(\alpha \cdot k-j)  - 1 )\hat{f}(j) \hat{f}(k-j) \\
	&= (-i)\sgn (\alpha \cdot k) \sum_{j \in \ZZ^N} (-i) \sgn(\alpha \cdot k-j) \hat{f}(j) \hat{f}(k-j) + (-i)\sgn (\alpha \cdot k) \sum_{j \in \ZZ^N} (-i) \sgn(\alpha \cdot j) \hat{f}(j) \hat{f}(k-j) \\
	&=  \sum_{j \in \ZZ^N} (-i)^2 ( \sgn (\alpha \cdot k) \sgn(\alpha \cdot k-j) +  \sgn (\alpha \cdot k) \sgn(\alpha \cdot j))\hat{f}(j) \hat{f}(k-j),
\end{align*}
so we just need prove that 
\[ (-i)^2\sgn(\alpha \cdot j) \sgn(\alpha \cdot k-j)  - 1  = (-i)^2 ( \sgn (\alpha \cdot k) \sgn(\alpha \cdot k-j) +  \sgn (\alpha \cdot k) \sgn(\alpha \cdot j)), \]
in other words, 
\[ -\sgn(\alpha \cdot j) \sgn(\alpha \cdot k-j)  - 1  = -( \sgn (\alpha \cdot k) \sgn(\alpha \cdot k-j) +  \sgn (\alpha \cdot k) \sgn(\alpha \cdot j)), \]
which is the same as 
\begin{equation}\label{cotlareq}
	\sgn(\alpha \cdot k-j) [(\sgn(\alpha \cdot k)) - \sgn(\alpha \cdot j)] = 1 - \sgn (\alpha \cdot j) \sgn(\alpha \cdot k).
\end{equation}
When both $\alpha \cdot k$ and $\alpha \cdot j$ are of the same sign, both sides of \eqref{cotlareq} evaluate to 0.
When $\alpha \cdot k$ and $\alpha \cdot j$ are of opposite signs, both sides of \eqref{cotlareq} evaluate to 2.
Thus, \eqref{cotlareq} holds.
\end{proof}

\section*{Acknowledgments} 
SA gratefully acknowledges useful conversations with Thierry Laurens and Jason Chen. 
DMA gratefully acknowledges support from the National Science Foundation through grant DMS-2307638.

\bibliographystyle{amsplain}
{\small\bibliography{references}}

\providecommand{\bysame}{\leavevmode\hbox to3em{\hrulefill}\thinspace}
\providecommand{\MR}{\relax\ifhmode\unskip\space\fi MR }
\providecommand{\MRhref}[2]{%
  \href{http://www.ams.org/mathscinet-getitem?mr=#1}{#2}
}
\providecommand{\href}[2]{#2}
\begin{thebibliography}{10}

\bibitem{AbdelouhabBonaFellandSaut}
L.~Abdelouhab, J.~L. Bona, M.~Felland, and J.-C. Saut, \emph{Nonlocal models
  for nonlinear, dispersive waves}, Physica D \textbf{40} (1989), no.~3,
  360--392.

\bibitem{AMW}
T.~Akhunov, D.~M. Ambrose, and J.~Douglas~Wright, \emph{Well-posedness of fully
  nonlinear {KdV}-type evolution equations}, Nonlinearity \textbf{32} (2019),
  no.~8, 2914--2954.

\bibitem{Ambrose_2016}
D.~M. Ambrose, \emph{Vortex sheet formulations and initial value problems:
  Analysis and computing}, London Mathematical Society Lecture Note Series,
  pp.~140--170, Cambridge University Press, 2016.

\bibitem{BinderDamanikGoldsteinLukic}
I.~Binder, D.~Damanik, M.~Goldstein, and M.~Lukic, \emph{Almost periodicity in
  time of solutions of the {KdV} equation}, Duke Math. J. \textbf{167} (2018),
  no.~14, 2633--2678.

\bibitem{BonaSmith}
J.~L.. Bona and R.~Smith, \emph{The initial-value problem for the {K}orteweg-de
  {V}ries equation}, Philos. Trans. Roy. Soc. London Ser. A (1975).

\bibitem{Case}
K.~M. Case, \emph{Properties of the {Benjamin}-{Ono} equation}, J. Math. Phys.
  \textbf{20} (1979), 972--977.

\bibitem{DamanikGoldstein}
D.~Damanik and M.~Goldstein, \emph{On the existence and uniqueness of global
  solutions for the {KdV} equation with quasi-periodic initial data}, J. Am.
  Math. Soc. \textbf{29} (2016), no.~3, 825--856.

\bibitem{DamanikYongFeiPartOne}
D.~Damanik, Y.~Li, and F.~Xu, \emph{Existence, {Uniqueness} and {Asymptotic}
  {Dynamics} of {Nonlinear} {Schr{\"o}dinger} {Equations} {With}
  {Quasi}-{Periodic} {Initial} {Data}: {I}. {The} {Standard} {NLS}}, Preprint,
  {arXiv}:2405.19583 [math.{AP}], 2024.

\bibitem{DamanikYongFeiPartTwo}
\bysame, \emph{Existence, {Uniqueness} and {Asymptotic} {Dynamics} of
  {Nonlinear} {Schr{\"o}dinger} {Equations} {With} {Quasi}-{Periodic} {Initial}
  {Data}: {II}. {The} {Derivative} {NLS}}, Preprint, {arXiv}:2406.02512
  [math.{AP}], 2024.

\bibitem{DamanikYongFeiKdV}
\bysame, \emph{Local existence and uniqueness of spatially quasi-periodic
  solutions to the generalized {K}d{V} equation}, J. Math. Pures Appl. (9)
  \textbf{186} (2024), 251--302.

\bibitem{DamanikYongFeigBBM}
\bysame, \emph{The quasi-periodic {Cauchy} problem for the generalized
  {Benjamin}-{Bona}-{Mahony} equation on the real line}, J. Funct. Anal.
  \textbf{286} (2024), no.~3, 39.

\bibitem{DyachenkoSemenova}
S.~A. Dyachenko and A.~Semenova, \emph{Quasiperiodic perturbations of {Stokes}
  waves: secondary bifurcations and stability}, J. Comput. Phys. \textbf{492}
  (2023), 13.

\bibitem{follandPDEs}
G.~B. Folland, \emph{Introduction to partial differential equations}, 2nd ed.,
  Princeton University Press, Princeton, NJ, 1995.

\bibitem{GerardKappelerTopalov}
P.~G{\'e}rard, T.~Kappeler, and P.~Topalov, \emph{Sharp well-posedness results
  of the {Benjamin}-{Ono} equation in {{\(H^s(\mathbb{T},\mathbb{R})\)}} and
  qualitative properties of its solutions}, Acta Math. \textbf{231} (2023),
  no.~1, 31--88.

\bibitem{HerrSebastianDissertation}
S.~Herr, \emph{Well-posedness results for dispersive equations with derivative
  nonlinearities}, Ph.D. thesis, University of Dortmund, 2006.

\bibitem{Iorio}
R.~J. I{\'o}rio, \emph{On the {Cauchy} problem for the {Benjamin}-{Ono}
  equation}, Commun. Partial Differ. Equations \textbf{11} (1986), 1031--1081.

\bibitem{KenigKoenig2003}
C.~E. Kenig and K.~D. Koenig, \emph{On the local well-posedness of the
  {B}enjamin-{O}no and modified {B}enjamin-{O}no equations}, Math. Res. Lett.
  \textbf{10} (2003), no.~5-6, 879--895.

\bibitem{KillipLaurensVisan}
R.~Killip, T.~Laurens, and M.~Vi{\c{s}}an, \emph{Sharp well-posedness for the
  {Benjamin}-{Ono} equation}, Invent. Math. \textbf{236} (2024), no.~3,
  999--1054.

\bibitem{Molinet2007}
L.~Molinet, \emph{Global well-posedness in the energy space for the
  {B}enjamin-{O}no equation on the circle}, Math. Ann. \textbf{337} (2007),
  no.~2, 353--383.

\bibitem{Molinet2008}
\bysame, \emph{Global well-posedness in {$L^2$} for the periodic
  {B}enjamin-{O}no equation}, Amer. J. Math. \textbf{130} (2008), no.~3,
  635--683.

\bibitem{MolinetPilod2012}
L.~Molinet and D.~Pilod, \emph{The {C}auchy problem for the {B}enjamin-{O}no
  equation in {$L^2$} revisited}, Anal. PDE \textbf{5} (2012), no.~2, 365--395.

\bibitem{HagenPapenburg}
H.~Papenburg, \emph{Local {Wellposedness} of dispersive equations with
  quasi-periodic initial data}, Preprint, {arXiv}:2402.14329 [math.{AP}]
  (2024), 2024.

\bibitem{babyrudin}
W.~Rudin, \emph{Principles of mathematical analysis}, 3rd ed., International
  Series in Pure and Applied Mathematics, McGraw-Hill Book Co., New
  York-Auckland-D\"usseldorf, 1976.

\bibitem{SautTemam}
J.~C. Saut and R.~Temam, \emph{Remarks on the korteweg-de vries equation},
  Israel Journal of Mathematics \textbf{24} (1976), no.~1, 78--87.

\bibitem{Schippa}
R.~Schippa, \emph{Strichartz estimates for quasi-periodic functions and
  applications}, Preprint, {arXiv}:2407.01830 [math.{AP}] (2024), 2024.

\bibitem{SunTopalov}
X.~Sun and P.~Topalov, \emph{Spatially quasi-periodic solutions of the {Euler}
  equation}, J. Math. Fluid Mech. \textbf{25} (2023), no.~3, 34.

\bibitem{TsugawaKdV}
K.~Tsugawa, \emph{Local well-posedness of the {KdV} equation with
  quasi-periodic initial data}, SIAM J. Math. Anal. \textbf{44} (2012), no.~5,
  3412--3428.

\bibitem{WilkeningZhaoJFM2021}
J.~Wilkening and X.~Zhao, \emph{Quasi-periodic travelling gravity-capillary
  waves}, J. Fluid Mech. \textbf{915} (2021), 35.

\bibitem{WilkeningZhaoJNS2021}
\bysame, \emph{Spatially quasi-periodic water waves of infinite depth}, J.
  Nonlinear Sci. \textbf{31} (2021), no.~3, 43.

\bibitem{WilkeningZhaoJCP}
\bysame, \emph{Spatially quasi-periodic bifurcations from periodic traveling
  water waves and a method for detecting bifurcations using signed singular
  values}, J. Comput. Phys. \textbf{478} (2023), 34.

\bibitem{WilkeningZhaoPA2023}
\bysame, \emph{Spatially quasi-periodic water waves of finite depth}, Proc. A.
  \textbf{479} (2023), no.~2272, Paper No. 20230019, 28.

\bibitem{Fei}
F.~Xu, \emph{The {Weakly} {Nonlinear} {Schr{\"o}dinger} {Equation} in {Higher}
  {Dimensions} with {Quasi}-periodic {Initial} {Data}}, Preprint,
  {arXiv}:2409.10006 [math.{AP}], 2024.

\bibitem{JasonZhaoPrivate}
J.~Zhao, personal communication.

\bibitem{JasonZhao}
\bysame, \emph{Local well-posedness for dispersive equations with bounded
  data}, Preprint, {arXiv}:2409.04706 [math.{AP}] (2024), 2024.

\end{thebibliography}

\end{document}